\theoremstyle{thmstyleone}%
\newtheorem{theorem}{Theorem}[section]%%  meant for continuous numbers
\newtheorem{lemma}{Lemma}[section]%
\theoremstyle{thmstyletwo}%
\newtheorem{remark}{Remark}%
\theoremstyle{thmstylethree}%
\numberwithin{remark}{section}
\begin{document}

\title[Existence of the solution for a double phase system]{Multiple positive solutions for a double phase system with singular nonlinearity}

%%=============================================================%%
%% Prefix	-> \pfx{Dr}
%% GivenName	-> \fnm{Joergen W.}
%% Particle	-> \spfx{van der} -> surname prefix
%% FamilyName	-> \sur{Ploeg}
%% Suffix	-> \sfx{IV}
%% NatureName	-> \tanm{Poet Laureate} -> Title after name
%% Degrees	-> \dgr{MSc, PhD}
%% \author*[1,2]{\pfx{Dr} \fnm{Joergen W.} \spfx{van der} \sur{Ploeg} \sfx{IV} \tanm{Poet Laureate}
%%                 \dgr{MSc, PhD}}\email{iauthor@gmail.com}
%%=============================================================%%

\author[1]{\fnm{Yizhe} \sur{Feng}}\email{yzfeng2021@163.com}

\author*[1]{\fnm{Zhanbing} \sur{Bai}}\email{zhanbingbai@163.com}
\equalcont{These authors contributed equally to this work.}
\affil[1]{\orgdiv{College of Mathematics and System Science}, \orgname{Shandong University of Science and Technology}, \city{Qingdao}, \postcode{266590}, \state{Shandong}, \country{China}}

%%==================================%%
%% sample for unstructured abstract %%
%%==================================%%

\abstract{ In this paper, we study a class of double phase systems which contain the singular and mixed nonlinear terms. Unlike the single equation, the mixed nonlinear terms make the problem more complicate. The geometry of the fibering mapping has multiple possibilities. To overcome the difficulties posed by the mixed nonlinear terms, we need to repeatedly construct concave functions, discuss different cases, and use the properties of concave functions and basic inequalities such as H\"{o}lder inequality, Poincar\'{e}'s inequality and Young's inequality. By the use of the Nehari manifold, the existence and multiplicity of positive solutions which have nonnegative energy are obtained. It is worth mentioning that we note the existence of saddle point solution(a station point that is not a local minimum), see Remark \ref{reandianjie}.}

\keywords{Double phase system; Fibering map; Nehari manifold; Musielak$\mbox{-}$Orlicz space; Positive solution}

%%\pacs[JEL Classification]{D8, H51}

\pacs[MSC Classification]{05J50, 03H10, 35D30}

\maketitle

\section{Introduction}
In this work, the multiplicity of solutions to a class of double phase systems with $m$ equations and Dirichlet boundary value condition of the type
\begin{equation}\label{eq}
\begin{cases}
    - \Delta_p y_1 - {\rm div}(\eta\lvert \nabla y_1\rvert^{q - 2}\nabla y_1)\\
   \ \ \ \ \ \ \ \  = a_1(z)\vert y_1\vert ^{ -1- \nu}y_1 + \lambda(\kappa_1 + 1)\lvert y_1\lvert^{\kappa_1 - 1}y_1 \lvert y_2 \lvert^{\kappa_2 + 1}\cdots \lvert y_m \lvert^{\kappa_m + 1},  & z\in \Omega, \\
    - \Delta_p y_2 - {\rm div}(\eta\lvert \nabla y_2\rvert^{q - 2}\nabla y_2)\\
  \ \ \ \ \ \ \ \   = a_2(z)\vert y_2\vert ^{ -1- \nu}y_2 + \lambda(\kappa_2 + 1)\lvert y_1 \lvert ^{\kappa_1 + 1}\lvert y_2\lvert^{\kappa_2 - 1}y_2\cdots \lvert y_m \lvert^{\kappa_m + 1} , & z\in \Omega, \\
  \ \ \ \ \ \ \ \  \vdots\\
    - \Delta_p y_m - {\rm div}(\eta\lvert \nabla y_m\rvert^{q - 2}\nabla y_m)\\
   \ \ \ \ \ \ \ \  = a_m(z)\vert y_m\vert ^{ -1- \nu}y_m+ \lambda(\kappa_m + 1)\lvert y_1 \lvert ^{\kappa_1 + 1}\lvert y_2\lvert^{\kappa_2 + 1}\cdots \lvert y_m \lvert^{\kappa_m - 1}y_m , & z\in \Omega, \\
   y_1 = y_2 = \cdots = y_m = 0, &z\in \partial \Omega
\end{cases}
\end{equation}
is investigated, where $m\geq 2$, $\lambda>0$ and $0 < \nu < 1 < p < q < \kappa_i + 1 < p^*$ ($i$ = 1, 2, $\cdots$, $m$); $\Delta_py = {\rm div}(\lvert \nabla y\rvert^{p - 2}\nabla y)$; $\Omega \subseteq \mathbb{R}^n $, $n\geq 2$ is a bounded domain with Lipschitz boundary $\partial \Omega$.

In 1986, Zhikov \cite{Zhikov1} first introduced the double phase operator that appear in \eqref{eq} which is denoted by
\begin{equation}\label{doublephase}
  y \mapsto  {\rm div}(\lvert \nabla y\rvert^{p - 2}\nabla y + \eta(z)\lvert \nabla y\rvert^{q - 2}\nabla y), \ y\in W^{1, \mathcal{H}},
\end{equation}
in order to describe the hardening properties of strongly anisotropic materials which change drastically at some of their
points. Zhikov introduced the related energy functional defined by
\begin{equation}\label{dp - energy}
y \mapsto \int_{\Omega}\left( \frac{1}{p}\lvert \nabla y\rvert^p + \frac{\eta(z)}{q}\lvert \nabla y\rvert^{q} \right)dz.
\end{equation}
The variational integral functional \eqref{dp - energy} intervene in Homogenization theory and Elasticity, and also be used to describe the new examples of Lavrentiev’s phenomenon \cite{Zhikov2, Zhikov3}. The energy density
\begin{equation*}
 \rho_{(p, q)}(y, z) =  \frac{1}{p}\lvert y\rvert^p + \frac{\eta(z)}{q}\lvert y\rvert^{q}
\end{equation*}
of \eqref{dp - energy} change their ellipticity rate according to the point, the geometry of a composite made of two materials with their power hardening exponents $p$ and $q$, respectively, are depended on the modulation coefficient $\eta(\cdot)$. The ``$(p, q)\mbox{-}$phase'' refers to $\eta(z) > 0$, $q>p$, and $\rho$ exhibits a polynomial growth of order $q$ with respect to the gradient variable $z$. The growth is at rate $p$ when ${\eta(z) = 0}$ and this is called ``$p$$\mbox{-}$phase''. Subsequently, many scholars have studied the functionals in \eqref{doublephase}, Marcellini \cite{Marce1, Marce2} proved \eqref{doublephase} also belongs to the integrative functional class with non$\mbox{-}$standard growth conditions, and other. We provide readers the works of Baroni, et al \cite{Baroni1, Baroni2}, Colombo and Mingione \cite{Colombo1, Colombo2} to learn more.

The problem of form \eqref{doublephase} also appeared in many physics issues, notably the flow of non$\mbox{-}$Newtonian fluids. Liu and Dai \cite{LD2} used the motion of a non$\mbox{-}$Newtonian fluid between two planks as a model to explore the relationship between the double phase operator and internal friction. Denote by $v$ the speed of this fluid at some layer, $F$ the internal frictional force in the opposite direction to $v$, $S$ the contact area between the plate and the fluid, and the viscosity coefficient $\mu$ is defined by $\mu := \lvert \nabla y\rvert^{p - 2} + \eta(z)\lvert \nabla y\rvert^{q - 2}$. By using Newton's viscosity law, he obtained
\begin{equation}\label{FS}
  \frac{{\rm div} \vec{F}}{S} = {\rm div}((\lvert \nabla y\rvert^{p - 2} + \eta(z)\lvert \nabla y\rvert^{q - 2})\nabla y).
\end{equation}
Let $F\equiv0$ in \eqref{FS}, and is actually the Euler equation of functional \eqref{doublephase}. In addition, double phase operator also appears in the study of
torsional creep \cite{Kaw} and glacial sliding \cite{SW}.

In 2018, Liu and Dai \cite{LD1} used variational method to obtain the existence and multiplicity of solutions for the following double phase problem in  Musielak$\mbox{-}$Orlicz spaces
\begin{equation}\notag
  \begin{cases}
    - \Delta_py - {\rm div}(\eta(z)\lvert \nabla y\rvert^{q - 2}\nabla y) = f(z, y), & z\in \Omega, \\
    y = 0, & z\in \partial \Omega.
\end{cases}
\end{equation}
In 2022, Liu and Dai \cite{LD4} studied the above problem where
\begin{equation*}
  f(z, y) = a(z)y^{ - \nu} + \lambda y^{r - 1}, \ \ 0 < \nu < 1, \ p< q < r < p^*.
\end{equation*}
It is obvious that $f$ is no longer a Carathéodory function (because of the singular term). By using Nehari manifold, the existence of two positive solutions of the problem has been proved. Then in \cite{LD3}, they studied the same equation as \cite{Papa} with $z\in\mathbb{R}^{n}$ instead of in the bounded domain $\Omega$. We present readers \cite{Farkas1,Farkas2,Arora, Zeng1, Zeng2, Gas1,Gas2, Bahrouni, Cre, CPW, ColS, Byun} to learn more about the existence and multiplicity results of double phase problems.

However, the above research on the double phase problem is with respect to a single equation, only a few system of equations for the double phase problem has been studied. In 2021, Bahrouni and Rădulescu \cite{A. Bahrouni} studied the following singular double phase system with variable growth and baouendi-grushin operator
\begin{equation*}
   \begin{cases}
    - \Delta_{G(z_1,z_2)}y_1 + \vert y_1 \vert^{q(z)-2}y_1 + \vert y_1 \vert^{p(z)-2}y_1 = a_1y_1^{-\nu_1}-b\alpha\vert y_2\vert^{\beta}\vert y_1\vert^{\alpha-2}y_1,   \\
     - \Delta_{G(z_1,z_2)}y_2 + \vert y_2 \vert^{q(z)-2}y_2 + \vert y_2 \vert^{p(z)-2}y_2 = a_2y_2^{-\nu_2}-b\beta\vert y_1\vert^{\alpha}\vert y_2\vert^{\beta-2}y_2, \\
\end{cases}
\end{equation*}
where $z=(z_1,z_2)\in \mathbb{R}^{n}$, $a_1,a_2,b,p,q,\alpha,\beta \in C(\mathbb{R}^{n},\mathbb{R})$, $\nu_1,\nu_2:\mathbb{R}^{n}\rightarrow (0,1)$. He established a related compactness property, and obtained the existence of at least one weak solution. For double phase systems containing convection terms, Marino and Winkert in \cite{Marino G} studied the following double phase system
\begin{equation}\label{convention}
  \begin{cases}
    - \Delta_{p_1}y_1 - {\rm {\rm div}}(\eta_1(z)\lvert \nabla y_1\rvert^{q_1 - 2}\nabla y_1) =f_1(z,y_1,y_2,\nabla y_1,
    \nabla y_2), & z\in \Omega, \\
    - \Delta_{p_2}y_2 - {\rm div}(\eta_2(z)\lvert \nabla y_2\rvert^{q_2 - 2}\nabla y_2) = f_2(z,y_1,y_2,\nabla y_1,
    \nabla y_2), & z\in \Omega, \\
     y_1 = y_2 = 0, & z\in \partial \Omega.
  \end{cases}
\end{equation}
The existence and uniqueness of the weak solution of the system \eqref{convention} is obtained by using the surjectivity result for pseudomonotone operators. Then, in 2022, Guarnotta, et al \cite{Guarnotta U} studied \eqref{convention} with variable growth and nonlinear boundary condition. By using the sub-supersolution method, they obtained infinitely many solutions. The methods of the above works are completely different from ours.

Motivated by above research, in this paper, we consider the study of a system shown in \eqref{eq} with singular and nonlinear terms. Unlike the single equation, the mixed nonlinear terms can complicate the problem, some new ideas and techniques are needed.
\begin{itemize}[itemindent = -0.5em]
  \item[$(1)$]\ The geometry of the fibering mapping has multiple possibilities, we need to discuss the mixed nonlinear terms in different cases depending on their energies to determine the geometry of the fibering mapping.
  \item[$(2)$]\ To prove that the limit of the minimizing sequence is not zero, we discuss different cases and prove it by the combination of a series of inequalities, instead of getting the conclusion directly from one inequality.
\end{itemize}
 Also, To overcome the difficulties posed by the mixed nonlinear terms, we repeatedly construct concave functions, discuss different cases, and use the properties of concave functions and basic inequalities such as H\"{o}lder inequality, Poincar\'{e}'s inequality and Young's inequality.

%We will use the Nehari manifold method to obtain the existence results for two positive solutions. The Nehari manifold used in this paper was first introduced by Nehari \cite{Nehari} to solve the boundary value problem of a class of second$\mbox{-}$order nonlinear elliptic equations, we provide reader \cite{Nehari,Bozhkov,Drabek, Brown} to learn more about the Nehari manifolds and fibering map. Unlike elliptic systems of the general form, we are
%working in Musielak$\mbox{-}$Orlicz Sobolev vector spaces instead of usual Sobolev vector spaces.

For the sake of simplicity, we can concentrate the argument on the case that $m=2$. In Section 2, we recall the main properties on the theory of Musielak$\mbox{-}$Orlicz spaces and give some definitions of the fibering map and the Nehari manifold. In Section 3, some lemmas and propositions are given which are required for the existence results, and then we prove the main existence theorem (see Theorem \ref{maintheorem}). Finally, in Theorem \ref{fangchengzujie} we generalize the results of Theorem \ref{maintheorem} to the case that $m>2$.

\par The following hypotheses $(H_1)-(H_3)$ will be assumed,
\begin{itemize}[itemindent = 0.4em]
  \item[$(H_1)$: ]$1 < p < n,$ $p < q < p^*$, where $p^*= \frac{np}{n - p}$ and $\eta : \overline{\Omega} \rightarrow [0,\infty)\in L^{\infty}(\Omega)$ and $\eta(z) \geq 0$;
  \item[$(H_2)$: ]$a_i\in L^\infty(\Omega), a_i(z)>0$ for $a.e. ~z\in \Omega, i = 1, ..., m$;
  \item[$(H_3)$: ]$\sum_{i = 1}^{m} \kappa_i + m < p^*$.
\end{itemize}

\section{Preliminaries}
In this section we recall some results of Musielak$\mbox{-}$Orlicz spaces. These results are from \cite{LD1,LD4,Cre,ColS}.

Let $W_{0}^{1, p}(\Omega)$ be the Sobolev space equipped with the norm
\begin{equation*}
  \|y\|_{1, p} = \left(\int_{\Omega}\vert \nabla y\vert^pdz\right)^{\frac{1}{p}}.
\end{equation*}
For $(y_1, y_2)\in W_{0}^{1, p}(\Omega)\times W_{0}^{1, p}(\Omega)$, let
\begin{equation*}
  \| (y_1, y_2)\|_{1,p} = \|y_1\|_{1, p}^p + \|y_2\|_{1, p}^p.
\end{equation*}
Denote by $\|\cdot\|_p$ the norm of $L^p(\Omega)$. Then one has
\begin{equation*}
\| (y_1, y_2)\|_{1,p} = \|y_1\|_{1, p}^p + \|y_2\|_{1, p}^p = \| \nabla y_1\|_p^p + \|\nabla y_2\|_p^p.
\end{equation*}
Denote the maximum norm of $\mathbb{R}^n$ by $\vert \cdot \vert$.

Let $L^{\mathcal{H}}(\Omega)$ defined as
\begin{equation*}
  L^{\mathcal{H}}(\Omega)=\left\{y\big\vert y: \Omega\rightarrow \mathbb{R} {\rm\ is\ measurable\ and\ }\rho_{\mathcal{H}}(y)<+\infty\right\}
\end{equation*}
 be the Musielak$\mbox{-}$Orlicz space with Luxemburg norm
\begin{equation*}
  \|y\|_{\mathcal{H}}=\inf \left\{ \tau > 0: \rho_{\mathcal{H}}\left(\frac{y}{\tau}\right)\leq 1 \right\},
\end{equation*}
where $\mathcal{H}:\Omega\times [0,\infty) \rightarrow [0,\infty)$ be the function defined as
\begin{equation*}
  \mathcal{H}(z,t) = t^p + \eta(z)t^q,
\end{equation*}
 and the modular function $\rho_{\mathcal{H}}:L^{\mathcal{H}}(\Omega)\rightarrow \mathbb{R}$ is given by
\begin{equation}\label{2.1(liu)}
  \rho_{\mathcal{H}}(y):=\int_{\Omega} \mathcal{H}(z,\vert y(z) \vert)dz=\int_{\Omega}(\vert y(z) \vert^p + \eta(z)\vert y(z) \vert^q)dz.
\end{equation}
The space $L^{\mathcal{H}}(\Omega)$ is a reflexive Banach, see Proposition 2.14 of [13].

 The seminormed space $ L_\eta^q(\Omega)$ is defined as
\begin{equation*}
  L_\eta^q(\Omega)=\left\{y \bigg\vert y:\Omega\rightarrow \mathbb{R} {\rm\ is\ measurable\ and\ } \int_{\Omega}\eta(z)\vert y(z) \vert^{q} dz< +\infty\right\},
  \end{equation*}
endowed with the seminorm
\begin{equation*}
  \| y\|_{q,\eta}= \left(\int_{\Omega} \eta(z) \lvert y(z) \rvert^{q}dz \right)^{\frac{1}{q}}.
\end{equation*}
The space $L_{\eta}^q(\Omega;\mathbb{R}^N)$ is defined as well.

Let $W^{1,\mathcal{H}}(\Omega)$ be the Musielak$\mbox{-}$Orlicz Sobolev space which defined as
\begin{equation*}
 W^{1,\mathcal{H}}(\Omega)=\left\{ y\in L^{\mathcal{H}}(\Omega) : \vert \nabla y \vert \in L^{\mathcal{H}}(\Omega)\right\}
\end{equation*}
equipped with the norm
\begin{equation*}
  \|y\|_{1,\mathcal{H}}=\|\nabla y\|_{\mathcal{H}} + \| y\|_{\mathcal{H}},
\end{equation*}
where $\|\nabla y\|_{\mathcal{H}} = \|\vert\nabla y\vert\|_{\mathcal{H}}$. Let $W_{0}^{1,\mathcal{H}}(\Omega)$ be defined as the completion of $C_{0}^{\infty}(\Omega)$ in $W^{1,\mathcal{H}}(\Omega)$. Thanks to Proposition 2.16(ii) of Crespo–Blanco \cite{Cre} or Proposition 2.2 of R. Arora \cite{Arora2}, we know $\|\nabla y\|_{\mathcal{H}}$ is an equivalent norm on $W_{0}^{1,\mathcal{H}}(\Omega)$ when assumption $(H_1)$ be satisfied.

Furthermore, both $W^{1,\mathcal{H}}(\Omega)$ and $W_{0}^{1,\mathcal{H}}(\Omega)$ are uniformly concave Banach spaces, see Proposition 2.14 and Proposition 2.18(ii) of Crespo–Blanco \cite{Cre}.

For $(y_1, y_2)\in W_{0}^{1, \mathcal{H}}(\Omega)\times W_{0}^{1, \mathcal{H}}(\Omega)$, let
\begin{equation*}
  \|(y_1,y_2)\|= \|\nabla y_1\|_{\mathcal{H}} + \|\nabla y_2\|_{\mathcal{H}} ,
\end{equation*}
and
\begin{equation*}
  \| (y_1, y_2)\|_{q,\eta}=\| y_1\|_{q,\eta}^{q}+\| y_2\|_{q,\eta}^{q}.
\end{equation*}
Thus, it is clearly that
\begin{equation*}
  \rho_{\mathcal{H}}(\nabla y_1) + \rho_{\mathcal{H}}(\nabla y_2) = \| (y_1, y_2)\|_{1,p} + \| (\nabla y_1, \nabla y_2)\|_{q,\eta}.
\end{equation*}

The following embedding results from Propositions 2.17 and 2.19 of Crespo–Blanco \cite{Cre} or Proposition 2.2 of R. Arora \cite{Arora2} are critical to our article.

\begin{lemma}(Proposition 2.2, \cite{Arora2})\label{prop2.2(Aro)}
Let $(H_1)$ be satisfied. Then the following embeddings hold:
\begin{itemize}[itemindent = 0.2em]
  \item[$(i)$:]\ \ $L^{\mathcal{H}}(\Omega) \hookrightarrow L^{r}(\Omega)$ and $W_{0}^{1,\mathcal{H}}(\Omega) \hookrightarrow W_{0}^{1,r}(\Omega)$ are continuous for all $r \in [1,p]$;
  \item[$(ii)$:]\ \ $W_{0}^{1,\mathcal{H}}(\Omega) \hookrightarrow L^{r}(\Omega)$ is continuous for all $r \in [1,p^*]$;
  \item[$(iii)$:]\ \ $W_{0}^{1,\mathcal{H}}(\Omega) \hookrightarrow L^{r}(\Omega)$ is compact for all $r \in [1,p^*)$;
  \item[$(iv)$:]\ \ $L^{\mathcal{H}}(\Omega) \hookrightarrow L_{\eta}^{q}(\Omega)$ is continuous;
  \item[$(v)$:]\ \ $L^{q}(\Omega) \hookrightarrow L^{\mathcal{H}}(\Omega)$ is continuous.
 \end{itemize}
\end{lemma}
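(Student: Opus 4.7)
The plan is to prove each of the five embeddings in turn, exploiting the relationship between modular and Luxemburg norm on the Musielak--Orlicz space, together with the boundedness of $\Omega$ and the assumption $\eta\in L^{\infty}(\Omega)$. The general tool I will reuse throughout is the standard norm--modular inequality
\begin{equation*}
\min\{\|y\|_{\mathcal{H}}^{p},\|y\|_{\mathcal{H}}^{q}\}\leq \rho_{\mathcal{H}}(y)\leq \max\{\|y\|_{\mathcal{H}}^{p},\|y\|_{\mathcal{H}}^{q}\},
\end{equation*}
which follows from comparing $\mathcal{H}(z,\tau t)$ to $\tau^{p}\mathcal{H}(z,t)$ or $\tau^{q}\mathcal{H}(z,t)$ according as $\tau\leq 1$ or $\tau\geq 1$, applied with $\tau=\|y\|_{\mathcal{H}}$ to the normalized function $y/\|y\|_{\mathcal{H}}$.

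For (i), because $\mathcal{H}(z,t)\geq t^{p}$ pointwise one gets $\|y\|_{p}^{p}\leq \rho_{\mathcal{H}}(y)$; combining with the norm--modular inequality yields $\|y\|_{p}\leq C\|y\|_{\mathcal{H}}$, and for $1\leq r<p$ one uses H\"older's inequality together with $|\Omega|<\infty$. The gradient version for $W_{0}^{1,\mathcal{H}}\hookrightarrow W_{0}^{1,r}$ follows by applying the same estimate to $|\nabla y|$. For (iv), since $\eta(z)t^{q}\leq \mathcal{H}(z,t)$, the identical conversion gives $\|y\|_{q,\eta}^{q}\leq \rho_{\mathcal{H}}(y)$, hence continuity. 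For (v), the assumptions $\eta\in L^{\infty}$ and $|\Omega|<\infty$ yield
\begin{equation*}
\rho_{\mathcal{H}}(y)\leq \int_{\Omega}|y|^{p}dz+\|\eta\|_{\infty}\int_{\Omega}|y|^{q}dz\leq |\Omega|^{(q-p)/q}\|y\|_{q}^{p}+\|\eta\|_{\infty}\|y\|_{q}^{q}
\end{equation*}
after one application of H\"older's inequality with exponents $q/p$ and $q/(q-p)$; reconverting from modular to norm gives continuity.

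For (ii) and (iii) the natural route is chaining through the classical Sobolev space: part (i) already gives the continuous embedding $W_{0}^{1,\mathcal{H}}(\Omega)\hookrightarrow W_{0}^{1,p}(\Omega)$, and composing with the Sobolev embedding $W_{0}^{1,p}(\Omega)\hookrightarrow L^{p^{*}}(\Omega)$ (using $p<n$ from $(H_{1})$) together with $|\Omega|<\infty$ to cover all $r\in[1,p^{*}]$ proves (ii). Replacing the Sobolev embedding with the Rellich--Kondrachov compactness theorem $W_{0}^{1,p}(\Omega)\hookrightarrow\hookrightarrow L^{r}(\Omega)$ for $r<p^{*}$ proves (iii).

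The main obstacle, and where care is required, is the norm--modular conversion: the modular $\rho_{\mathcal{H}}$ is not homogeneous, and because the two growth exponents $p$ and $q$ are different the estimates split into the cases $\|y\|_{\mathcal{H}}\leq 1$ and $\|y\|_{\mathcal{H}}\geq 1$. Once this piecewise bookkeeping is carried out once and stored as the norm--modular inequality above, every item of the lemma is reduced either to a pointwise algebraic comparison between $\mathcal{H}(z,t)$ and a power of $t$ or to a classical Sobolev/Rellich result applied in the bounded domain $\Omega$.
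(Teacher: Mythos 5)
Your proposal is correct, but note that the paper itself offers no proof of this lemma: it is imported verbatim as Proposition 2.2 of \cite{Arora2} (equivalently Propositions 2.17 and 2.19 of \cite{Cre}), so there is no internal argument to compare against. What you have written is essentially the standard proof that those references give, and it is sound: the pointwise inequalities $t^{p}\leq \mathcal{H}(z,t)$ and $\eta(z)t^{q}\leq\mathcal{H}(z,t)$ handle (i) and (iv) after the modular--norm conversion; the reverse estimate via $\eta\in L^{\infty}(\Omega)$, $|\Omega|<\infty$ and H\"older handles (v); and chaining $W_{0}^{1,\mathcal{H}}(\Omega)\hookrightarrow W_{0}^{1,p}(\Omega)$ through the Sobolev and Rellich--Kondrachov theorems (both available for $W_{0}^{1,p}$ on a bounded domain, and here $\partial\Omega$ is even Lipschitz) gives (ii) and (iii), since the composition of a continuous map with a compact one is compact. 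The one step you should make explicit rather than leave implicit is how a bound of the form $\|y\|_{p}^{p}\leq\rho_{\mathcal{H}}(y)\leq\max\{\|y\|_{\mathcal{H}}^{p},\|y\|_{\mathcal{H}}^{q}\}$ becomes the linear estimate $\|y\|_{p}\leq C\|y\|_{\mathcal{H}}$: because the embedding is linear, it suffices to bound it on the unit ball of $L^{\mathcal{H}}(\Omega)$ (where $\rho_{\mathcal{H}}(y)\leq 1$ by Lemma \ref{prop2.2(liu)}(ii)--(iii)) and then rescale by homogeneity of the two norms, or equivalently apply the modular to $y/\|y\|_{\mathcal{H}}$ directly via Lemma \ref{prop2.2(liu)}(i). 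With that bookkeeping spelled out, every item goes through.
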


\begin{lemma}(Proposition 2.1, \cite{LD1})\label{prop2.2(liu)}
The $\mathcal{H}$$\mbox{-}$modular has the following properties
\begin{itemize}[itemindent  =  0.2em]
  \item[$(i)$:]\ \ For $y\neq 0$, then $\|y\|_{\mathcal{H}} = \lambda\Leftrightarrow \rho_{\mathcal{H}}(\frac{y}{\lambda}) = 1$;
  \item[$(ii)$:]\ \ $\|y\|_{\mathcal{H}}<1$ (resp. $>1$; $ = 1)\Leftrightarrow\rho_{\mathcal{H}}(y)<1$ (resp. $>1$; $ = 1$);
  \item[$(iii)$:]\ \ $\|y\|_{\mathcal{H}}<1\Rightarrow\|y\|_{\mathcal{H}}^{q} \leq \rho_{\mathcal{H}}(y) \leq \|y\|_{\mathcal{H}}^{p}$; $\|y\|_{\mathcal{H}}>1\Rightarrow\|y\|_{\mathcal{H}}^{p} \leq \rho_{\mathcal{H}}(y) \leq \|y\|_{\mathcal{H}}^{q}$;
  \item[$(iv)$:]\ \ $\|y\|_{\mathcal{H}}\rightarrow 0\Leftrightarrow\rho_{\mathcal{H}}(y) \rightarrow 0$; $\|y\|_{\mathcal{H}}\rightarrow  + \infty\Leftrightarrow\rho_{\mathcal{H}}(y) \rightarrow  + \infty$.
 \end{itemize}
\end{lemma}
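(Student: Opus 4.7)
The backbone of the argument is that the modular $\rho_{\mathcal{H}}$ enjoys very favorable scaling because $\mathcal{H}(z,t)=t^{p}+\eta(z)t^{q}$ is, for fixed $z$, a strictly increasing continuous function of $t$ with polynomial growth (a $\Delta_2$-type $N$-function). For a fixed $y\in L^{\mathcal{H}}(\Omega)$ with $y\neq 0$, I would first study the auxiliary function
\[
\varphi(\lambda):=\rho_{\mathcal{H}}\!\left(\tfrac{y}{\lambda}\right)=\int_{\Omega}\Bigl(\lambda^{-p}|y|^{p}+\eta(z)\lambda^{-q}|y|^{q}\Bigr)\,dz,\qquad \lambda>0.
\]
Termwise inspection gives that $\varphi$ is continuous on $(0,\infty)$ (dominated convergence), strictly decreasing, and satisfies $\varphi(\lambda)\to+\infty$ as $\lambda\to 0^{+}$ and $\varphi(\lambda)\to 0$ as $\lambda\to+\infty$. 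Hence there exists a unique $\lambda_{0}>0$ with $\varphi(\lambda_{0})=1$, and the definition of the Luxemburg norm together with monotonicity forces $\|y\|_{\mathcal{H}}=\lambda_{0}$. This proves (i).

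With (i) in hand I would prove (iii) by a direct computation: writing $\lambda_{0}=\|y\|_{\mathcal{H}}$ and using $\rho_{\mathcal{H}}(y/\lambda_{0})=1$,
\[
\rho_{\mathcal{H}}(y)=\int_{\Omega}\Bigl(\lambda_{0}^{p}\cdot\tfrac{|y|^{p}}{\lambda_{0}^{p}}+\lambda_{0}^{q}\cdot\eta(z)\tfrac{|y|^{q}}{\lambda_{0}^{q}}\Bigr)\,dz.
\]
If $\lambda_{0}<1$ then $\lambda_{0}^{q}<\lambda_{0}^{p}<1$ (since $p<q$), so pulling out $\lambda_{0}^{q}$ as a lower bound and $\lambda_{0}^{p}$ as an upper bound of the two coefficients gives
\[
\lambda_{0}^{q}\le \rho_{\mathcal{H}}(y)\le \lambda_{0}^{p},
\]
which is exactly the first half of (iii); the case $\lambda_{0}>1$ is symmetric, yielding $\lambda_{0}^{p}\le\rho_{\mathcal{H}}(y)\le\lambda_{0}^{q}$.

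The remaining items reduce to (i) and (iii) via simple trichotomy and squeezing. For (ii): if $\|y\|_{\mathcal{H}}<1$ then (iii) gives $\rho_{\mathcal{H}}(y)\le \|y\|_{\mathcal{H}}^{p}<1$; if $\|y\|_{\mathcal{H}}>1$ then $\rho_{\mathcal{H}}(y)\ge \|y\|_{\mathcal{H}}^{p}>1$; if $\|y\|_{\mathcal{H}}=1$ then (i) gives $\rho_{\mathcal{H}}(y)=1$. The converses follow by contrapositive from these three forward implications, since the three cases are mutually exclusive and exhaustive. For (iv): when $\|y_n\|_{\mathcal{H}}\to 0$, eventually $\|y_n\|_{\mathcal{H}}<1$, and (iii) gives $\rho_{\mathcal{H}}(y_n)\le\|y_n\|_{\mathcal{H}}^{p}\to 0$; conversely, $\rho_{\mathcal{H}}(y_n)\to 0$ eventually forces $\rho_{\mathcal{H}}(y_n)<1$, hence $\|y_n\|_{\mathcal{H}}<1$, and then $\|y_n\|_{\mathcal{H}}\le\rho_{\mathcal{H}}(y_n)^{1/q}\to 0$ by (iii). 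The large-norm case is handled analogously by swapping the roles of $p$ and $q$.

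The only mildly delicate point I foresee is the attainment and continuity claims that underlie (i): one must verify $\varphi(\lambda_{0})=1$ for $\lambda_{0}=\|y\|_{\mathcal{H}}$ rather than only $\varphi(\lambda_{0})\le 1$. This is where the $\Delta_{2}$-type behavior of $\mathcal{H}$ is essential, since it guarantees that $\varphi$ cannot jump across the value $1$; a sequence $\lambda_{n}\downarrow\|y\|_{\mathcal{H}}$ with $\varphi(\lambda_{n})\le 1$ yields $\varphi(\|y\|_{\mathcal{H}})\le 1$ by monotone convergence, while $\lambda_{n}\uparrow\|y\|_{\mathcal{H}}$ with $\varphi(\lambda_{n})>1$ yields the reverse inequality in the limit via dominated convergence applied on $\{|y|>0\}$. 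Once this continuity step is carefully justified, everything else is algebraic manipulation.
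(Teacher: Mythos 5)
Your proof is correct, but note that the paper itself gives no argument for this lemma at all: it is imported verbatim as Proposition 2.1 of the cited work of Liu and Dai, so there is nothing in the text to compare against except the citation. Your route is the standard one and it works. One remark that simplifies your final paragraph considerably: with $\mathcal{H}(z,t)=t^{p}+\eta(z)t^{q}$ and $y\in L^{\mathcal{H}}(\Omega)$ (which, by the paper's definition of $L^{\mathcal{H}}(\Omega)$, means $A:=\int_{\Omega}|y|^{p}\,dz$ and $B:=\int_{\Omega}\eta(z)|y|^{q}\,dz$ are both finite), your auxiliary function is simply $\varphi(\lambda)=A\lambda^{-p}+B\lambda^{-q}$, an explicit elementary function of $\lambda$. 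Its continuity, strict monotonicity for $y\neq 0$ (which forces $A>0$), and the limits at $0^{+}$ and $+\infty$ are immediate, so no appeal to monotone or dominated convergence, nor to any $\Delta_{2}$ argument, is needed to rule out a jump across the value $1$; the "delicate point" you flag evaporates. The rest of your argument is clean: the identity $\rho_{\mathcal{H}}(y)=\lambda_{0}^{p}\,(A/\lambda_{0}^{p})+\lambda_{0}^{q}\,(B/\lambda_{0}^{q})$ exhibits $\rho_{\mathcal{H}}(y)$ as a convex combination of $\lambda_{0}^{p}$ and $\lambda_{0}^{q}$ (the weights sum to $1$ by (i)), which yields (iii) exactly as you state, and (ii) and (iv) then follow by the trichotomy and squeezing you describe.
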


%
%Let $A:W_{0}^{1,\mathcal{H}}(\Omega)\rightarrow W_{0}^{1,\mathcal{H}}(\Omega)^*$ be the nonlinear map defined by
%\begin{equation}\label{2.2(liu)}
%  \left<A(u),\phi \right>_{\mathcal{H}}:= \int_{\Omega}\vert \nabla u\vert^{p - 2}(\nabla u, \nabla \phi)dz + \int_{\Omega}\eta\vert \nabla u\vert^{q - 2}(\nabla u, \nabla \phi)dz
%\end{equation}
%for all $u,\phi\in W_{0}^{1,\mathcal{H}}(\Omega)$, where $\left<\cdot, \cdot\right>$ is the duality pairing between $W_{0}^{1,\mathcal{H}}(\Omega)$ and its dual space $W_{0}^{1,\mathcal{H}}(\Omega)^*$. The operator $A$ has the following properties:
%\begin{proposition}(Proposition 2.3, \cite{LD4})
%  The operator $A$ defined by \eqref{2.2(liu)} is bounded (that is, it maps bounded
%sets into bounded sets), continuous, strictly monotone (hence maximal monotone) and
%it is of type $(S^+)$.
%\end{proposition}

\begin{lemma}(Theorem A.3.2, \cite{Wangmx}, Poincar\'{e}'s inequality)
  Let $\Omega\subset\mathbb{R}^{n}$ be a bounded open set and $1\leq p < n$, then for given $q\in[1, p^*]$, there exists $C = C(n, p, q, \Omega) > 0$ such that
  \begin{equation*}
    \|y\|_{q}\leq C\|\nabla y\|_p, \ \ \forall\ y\in W_{0}^{1, p}(\Omega).
  \end{equation*}
\end{lemma}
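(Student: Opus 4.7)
The plan is to prove this Sobolev--Poincar\'{e} type inequality by first reducing to the critical exponent case $q=p^{*}$, then establishing the Sobolev inequality on $C_{c}^{\infty}(\mathbb{R}^{n})$, and finally extending by density. For $y\in W_{0}^{1,p}(\Omega)$ and any $q\in[1,p^{*}]$, since $\Omega$ is bounded with $\lvert\Omega\rvert<\infty$, H\"{o}lder's inequality gives $\|y\|_{q}\leq\lvert\Omega\rvert^{\frac{1}{q}-\frac{1}{p^{*}}}\|y\|_{p^{*}}$, so the problem reduces to proving $\|y\|_{p^{*}}\leq C\|\nabla y\|_{p}$ at the critical exponent $p^{*}=\frac{np}{n-p}$.

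Next I would establish the Gagliardo--Nirenberg--Sobolev inequality at $p=1$: for $y\in C_{c}^{\infty}(\mathbb{R}^{n})$, the fundamental theorem of calculus applied along each coordinate direction yields $\lvert y(z)\rvert\leq\int_{-\infty}^{\infty}\lvert\partial_{i}y(z_{1},\dots,z_{i-1},t,z_{i+1},\dots,z_{n})\rvert\,dt$ for $i=1,\dots,n$. Taking the geometric mean of these $n$ bounds and applying the iterated H\"{o}lder (Loomis--Whitney) inequality while integrating successively in $z_{1},\dots,z_{n}$ produces
\begin{equation*}
\|y\|_{\frac{n}{n-1}}\leq\prod_{i=1}^{n}\|\partial_{i}y\|_{1}^{\frac{1}{n}}\leq\frac{1}{n}\sum_{i=1}^{n}\|\partial_{i}y\|_{1}\leq\|\nabla y\|_{1},
\end{equation*}
which is the $p=1$ critical Sobolev inequality.

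To reach general $1<p<n$, I would apply the previous inequality to the test function $v=\lvert y\rvert^{\gamma}$ with $\gamma=\frac{p(n-1)}{n-p}$, so that $\gamma\cdot\frac{n}{n-1}=p^{*}$. Since $\lvert\nabla v\rvert=\gamma\lvert y\rvert^{\gamma-1}\lvert\nabla y\rvert$, H\"{o}lder's inequality with exponents $p$ and $p'=\frac{p}{p-1}$ gives
\begin{equation*}
\|y\|_{p^{*}}^{\gamma}=\|v\|_{\frac{n}{n-1}}\leq\|\nabla v\|_{1}\leq\gamma\bigl(\textstyle\int_{\Omega}\lvert y\rvert^{(\gamma-1)p'}dz\bigr)^{\frac{1}{p'}}\|\nabla y\|_{p},
\end{equation*}
and one checks that $(\gamma-1)p'=p^{*}$, so the integral on the right equals $\|y\|_{p^{*}}^{p^{*}/p'}$. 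Rearranging and using $\gamma-\frac{p^{*}}{p'}=1$ yields $\|y\|_{p^{*}}\leq\gamma\|\nabla y\|_{p}$, with constant $C=C(n,p)$.

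Finally, since $C_{c}^{\infty}(\Omega)$ is dense in $W_{0}^{1,p}(\Omega)$ by definition and functions in $C_{c}^{\infty}(\Omega)$ extend by zero to $C_{c}^{\infty}(\mathbb{R}^{n})$, the inequality passes to the $W_{0}^{1,p}$-limit, combining with the reduction from the first paragraph to yield $\|y\|_{q}\leq C(n,p,q,\Omega)\|\nabla y\|_{p}$. I expect the main technical obstacle to be the $p=1$ endpoint: the iterated H\"{o}lder step of Loomis--Whitney type requires careful bookkeeping of which variables are frozen at each integration, and is the only place where the dimensional exponent $\frac{n}{n-1}$ naturally emerges; once that is in hand, the passage to general $p$ via the $\lvert y\rvert^{\gamma}$ substitution and the boundedness reduction to subcritical $q$ are routine.
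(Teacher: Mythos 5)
This lemma is not proved in the paper at all: it is quoted verbatim as Theorem A.3.2 of the cited textbook of M.~Wang, so there is no internal argument to compare yours against. Your proposal is the classical Gagliardo--Nirenberg--Sobolev proof and it is correct. The reduction $\|y\|_{q}\leq\lvert\Omega\rvert^{\frac{1}{q}-\frac{1}{p^{*}}}\|y\|_{p^{*}}$ is valid because $\Omega$ is bounded; the Loomis--Whitney step gives the $p=1$ endpoint (which also disposes of the case $p=1$ in the statement); and the exponent bookkeeping in the bootstrap is right, since $\gamma=\frac{p(n-1)}{n-p}>1$ for $p>1$, $(\gamma-1)p'=p^{*}$, and $\gamma-\frac{p^{*}}{p'}=1$. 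Two small points worth making explicit if you write this out in full: $v=\lvert y\rvert^{\gamma}$ is only $C^{1}_{c}$ rather than $C^{\infty}_{c}$, so you need the $p=1$ inequality for compactly supported $C^{1}$ (or Lipschitz) functions, which follows by the same computation or by mollification; and in the final density passage you should note that a Cauchy sequence in $W_{0}^{1,p}(\Omega)$ is, by the inequality already proved on $C_{c}^{\infty}(\Omega)$, Cauchy in $L^{p^{*}}(\Omega)$, so the limit identification is immediate. Neither point is a gap, only a line each of justification.
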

\begin{lemma}(Theorem A.5.1, \cite{Wangmx})\label{jinqianru}
  Let  $\Omega\subset\mathbb{R}^{n}$ be a bounded open set with $C^{1}$ boundary $\partial\Omega$ and $1\leq p < n$, then for any $1\leq q < p^*$,
  \begin{equation*}
    W_{0}^{1, p}(\Omega)\hookrightarrow\hookrightarrow L^{q}(\Omega).
  \end{equation*}
\end{lemma}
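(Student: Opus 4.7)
The plan is to prove the continuous embedding first, then upgrade to compactness via the Fréchet--Kolmogorov criterion combined with an interpolation argument.

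First, I would establish the continuous embedding $W_{0}^{1,p}(\Omega) \hookrightarrow L^{p^{*}}(\Omega)$ using the Gagliardo--Nirenberg--Sobolev inequality: for $y \in C_{c}^{\infty}(\mathbb{R}^{n})$ one has $\|y\|_{p^{*}} \le C(n,p)\|\nabla y\|_{p}$; by density of $C_{c}^{\infty}(\Omega)$ in $W_{0}^{1,p}(\Omega)$, extending by zero, this extends to the whole space. Since $\Omega$ is bounded, Hölder's inequality then gives $\|y\|_{q} \le |\Omega|^{1/q - 1/p^{*}}\|y\|_{p^{*}}$ for every $1 \le q \le p^{*}$, which yields the continuous embedding into $L^{q}(\Omega)$ across the whole subcritical range.

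Next, to prove compactness, fix $1 \le q < p^{*}$ and take an arbitrary bounded sequence $\{y_{k}\} \subset W_{0}^{1,p}(\Omega)$. Extending each $y_{k}$ by zero outside $\Omega$, I would verify the hypotheses of the Fréchet--Kolmogorov theorem in $L^{p}(\mathbb{R}^{n})$: (a) uniform $L^{p}$ boundedness follows from the continuous embedding above; (b) uniform tightness is automatic since each $y_{k}$ is supported in the bounded set $\overline{\Omega}$; (c) the translation estimate
\begin{equation*}
  \|\tau_{h} y_{k} - y_{k}\|_{L^{p}(\mathbb{R}^{n})} \le |h|\,\|\nabla y_{k}\|_{L^{p}(\mathbb{R}^{n})},
\end{equation*}
proved first for smooth $y_{k}$ via the fundamental theorem of calculus and Minkowski's inequality, and then extended by density, gives equicontinuity of translations uniformly in $k$ as $|h| \to 0$. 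By Fréchet--Kolmogorov, $\{y_{k}\}$ is precompact in $L^{p}(\Omega)$, so a subsequence $\{y_{k_{j}}\}$ converges strongly in $L^{p}(\Omega)$.

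Finally, to upgrade to strong convergence in $L^{q}(\Omega)$ for an arbitrary subcritical exponent, I would interpolate: choose $\theta \in (0,1]$ with
\begin{equation*}
  \frac{1}{q} = \frac{\theta}{p} + \frac{1-\theta}{p^{*}},
\end{equation*}
which is available precisely because $q < p^{*}$, and apply
\begin{equation*}
  \|y_{k_{j}} - y_{k_{l}}\|_{q} \le \|y_{k_{j}} - y_{k_{l}}\|_{p}^{\theta}\,\|y_{k_{j}} - y_{k_{l}}\|_{p^{*}}^{1-\theta}.
\end{equation*}
The first factor tends to $0$ by the Cauchy property in $L^{p}$, while the second stays uniformly bounded thanks to the continuous embedding and the $W_{0}^{1,p}$-boundedness of $\{y_{k}\}$; hence $\{y_{k_{j}}\}$ is Cauchy, and therefore convergent, in $L^{q}(\Omega)$. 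The main technical obstacle is step (c): one needs the \emph{linear} dependence on $|h|$ rather than any weaker modulus, since it is precisely this that produces equicontinuity, and care is required when passing from smooth to general Sobolev functions. Note that the interpolation step genuinely fails at $q = p^{*}$ (no positive $\theta$ exists), reflecting the well-known non-compactness of the critical Sobolev embedding and explaining the strict inequality in the statement.
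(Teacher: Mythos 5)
The paper offers no proof of this lemma: it is quoted verbatim, with attribution, as Theorem A.5.1 of the cited textbook \cite{Wangmx} (the Rellich--Kondrachov compactness theorem), so there is no in-paper argument to compare against. Your proof is the standard textbook route --- Gagliardo--Nirenberg--Sobolev plus H\"older for the continuous embedding, the Fr\'echet--Kolmogorov criterion with the translation estimate $\|\tau_h y - y\|_{L^p} \le |h|\,\|\nabla y\|_{L^p}$ for precompactness in $L^p(\Omega)$, and interpolation to reach the other subcritical exponents --- and it is essentially correct. Two remarks. First, the interpolation step as written only covers $p \le q < p^{*}$: the exponent $\theta = (1/q - 1/p^{*})/(1/p - 1/p^{*})$ lies in $(0,1]$ exactly when $q \ge p$, not ``precisely because $q < p^{*}$''; for $1 \le q < p$ no admissible $\theta$ exists and you should instead pass from $L^p$-convergence to $L^q$-convergence directly via H\"older's inequality on the bounded domain. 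This is a trivial fix, but the claim as stated is false in that range, and the lemma does assert the result for all $1 \le q < p^{*}$. Second, your argument nowhere uses the $C^{1}$ regularity of $\partial\Omega$ assumed in the statement; this is harmless and in fact makes your proof slightly more general, since for $W_{0}^{1,p}(\Omega)$ the extension by zero requires no boundary regularity (the hypothesis matters only for the analogous embedding of $W^{1,p}(\Omega)$).
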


We say that $(y_1, y_2)\in W_{0}^{1, \mathcal{H}}(\Omega)\times W_{0}^{1,\mathcal{H}}(\Omega)$ is a weak solution of \eqref{eq}, if for any $(h, w)\in W_{0}^{1, \mathcal{H}}(\Omega)\times W_{0}^{1, \mathcal{H}}(\Omega)$, there holds $\left(a_1y_1^{ - \nu}h, a_2y_2^{ - \nu}w\right)\in L^{1}(\Omega)\times L^{1}(\Omega)$ for $y_1,y_2>0$  and
\begin{equation*}
  \begin{split}
  \int_{\Omega}&\vert \nabla y_1\vert^{p - 2}(\nabla y_1, \nabla h)dz + \int_{\Omega}\eta\vert \nabla y_1\vert^{q - 2}(\nabla y_1, \nabla h)dz - \int_{\Omega}a_1 y_1^{ - \nu}hdz \\
& + \int_{\Omega}\vert \nabla y_2\vert^{p - 2}(\nabla y_2, \nabla w)dz + \int_{\Omega}\eta\vert \nabla y_2\vert^{q - 2}(\nabla y_2, \nabla w)dz - \int_{\Omega}a_2 y_2^{ - \nu}w dz\\
& - \lambda(\kappa_1 + 1)\int_{\Omega}\vert y_1\vert^{\kappa_1}\vert y_2\vert^{\kappa_2 + 1} h dz  - \lambda(\kappa_2 + 1)\int_{\Omega}\vert y_1\vert^{\kappa_1 + 1}\vert y_2\vert^{\kappa_2} w dz = 0.
  \end{split}
\end{equation*}

Let the energy functional $J: W_{0}^{1, \mathcal{H}}(\Omega)\times W_{0}^{1, \mathcal{H}}(\Omega)\rightarrow \mathbb{R}$ be defined as
 \begin{align*}
   J(y_1, y_2) =  &\frac{1}{p}\| (y_1, y_2)\|_{1,p}  + \frac{1}{q}\| (\nabla y_1, \nabla y_2)\|_{q,\eta}-  \frac{1}{1 - \nu}\int_{\Omega}\left[a_1\vert y_1\vert^{1 - \nu} + a_2\vert y_2\vert^{1 - \nu}\right]dz\\
   &-  \lambda\int_{\Omega}\vert y_1\vert^{\kappa_1 + 1} \vert y_2\vert^{\kappa_2 + 1}dz.
 \end{align*}
Then the derivative of $J$ at $(y_1, y_2)$ with direction $(h, w)$ is given by
\begin{equation}\label{J^'uvhw}
\begin{split}
\big < J^{\prime}(y_1, y_2), (h, w)\big >  = & \int_{\Omega}\vert \nabla y_1\vert^{p - 2}(\nabla y_1, \nabla h)dz + \int_{\Omega}\eta\vert \nabla y_1\vert^{q - 2}(\nabla y_1, \nabla h)dz \\
& - \int_{\Omega}a_1\vert y_1\vert^{ -1- \nu}y_1hdz - \lambda(\kappa_1 + 1)\int_{\Omega}\vert y_1\vert^{\kappa_1-1}y_1\vert y_2\vert^{\kappa_2 + 1} h dz\\
& + \int_{\Omega}\vert \nabla y_2\vert^{p - 2}(\nabla y_2, \nabla w)dz + \int_{\Omega}\eta\vert \nabla y_2\vert^{q - 2}(\nabla y_2, \nabla w)dz \\
& - \int_{\Omega}a_2\vert y_2\vert^{-1 - \nu}y_2w dz - \lambda(\kappa_2 + 1)\int_{\Omega}\vert y_1\vert^{\kappa_1 + 1}\vert y_2\vert^{\kappa_2-1}y_2 w dz.\\
\end{split}
\end{equation}

Let Nehari manifold defined by
\begin{equation*}
  \mathcal{N}_{\lambda}  = \left\{ (y_1, y_2)\in W_{0}^{1, \mathcal{H}}(\Omega) \times W_{0}^{1, \mathcal{H}}(\Omega)\backslash\{(0, 0)\}: \big < J^{\prime}(y_1, y_2), (y_1, y_2)\big >  = 0 \right\}.
\end{equation*}
 Obviously, all critical points of $J$ are on the Nehari manifold, so, $\mathcal{N}_{\lambda}$ contains all weak solutions of \eqref{eq}.
In order to better understand Nehari manifold, we define a function $\psi_{(y_1, y_2)}(t) = J(ty_1, ty_2)$ for $(y_1,y_2)\in  W_{0}^{1, \mathcal{H}}(\Omega) \times W_{0}^{1, \mathcal{H}}(\Omega)\backslash\{(0, 0)\}$. Then
%, $\psi_1(t) = J_1(u, tv)$, $\psi_2(t) = J_2(u, tv)$, $t > 0$. Thus
\begin{equation*}\label{partialpsit}
  \begin{split}
  \psi_{(y_1, y_2)}^{\prime}(t) = &t^{p - 1}\| (y_1, y_2)\|_{1,p} + t^{q - 1}\| (\nabla y_1, \nabla y_2)\|_{q,\eta}\\
   & - t^{ - \nu}\int_{\Omega}\left[a_1\vert y_1 \vert^{1 - \nu} + a_2\vert y_2 \vert^{1 - \nu}\right]dz \\
   & -  \lambda(\kappa_1 + \kappa_2 + 2)t^{\kappa_1 + \kappa_2 + 1}\int_{\Omega}\vert y_1\vert^{\kappa_1 + 1}\vert y_2\vert^{\kappa_2 + 1}dz.
\end{split}
\end{equation*}
Hence, we give an equivalent definition of $\mathcal{N}_{\lambda}$ as
\begin{equation}
  \mathcal{N}_{\lambda}  = \left\{ (y_1, y_2)\in W_{0}^{1, \mathcal{H}}(\Omega) \times W_{0}^{1, \mathcal{H}}(\Omega)\backslash\{(0,0)\}: \psi_{(y_1, y_2)}^{\prime}(t) \bigg\vert_{t = 1} = 0 \right\}.
\end{equation}

Furthermore, one has
\begin{equation}\label{2partialpsit}
  \begin{split}
   \psi_{(y_1, y_2)}^{\prime\prime}(t)  = &(p - 1) t^{p - 2}\| (y_1, y_2)\|_{1,p} + (q - 1)t^{q - 2}\| (\nabla y_1, \nabla y_2)\|_{q,\eta}\\
   & + \nu t^{ - \nu - 1}\int_{\Omega}\left[a_1\vert y_1 \vert^{1 - \nu} + a_2\vert y_2 \vert^{1 - \nu}\right]dz\\
   & - \lambda(\kappa_1 + \kappa_2 + 2)(\kappa_1 + \kappa_2 + 1) t^{\kappa_1 + \kappa_2}\int_{\Omega}\vert y_1\vert^{\kappa_1 + 1}\vert y_2\vert^{\kappa_2 + 1}dz.
\end{split}
\end{equation}
Hence, we can divide $\mathcal{N}_{\lambda}$ into three disjoint subsets:
\begin{align*}
  \mathcal{N}_{\lambda}^{ + } = \left\{(y_1, y_2)\in \mathcal{N}_{\lambda}:\psi_{(y_1, y_2)}^{\prime\prime}(t) \bigg\vert_{t = 1} > 0 \right\}, \\
  \mathcal{N}_{\lambda}^{0} = \left\{(y_1, y_2)\in \mathcal{N}_{\lambda}:\psi_{(y_1, y_2)}^{\prime\prime}(t) \bigg\vert_{t = 1} = 0 \right\}, \\
 \mathcal{N}_{\lambda}^{ - } = \left\{(y_1, y_2)\in \mathcal{N}_{\lambda}:\psi_{(y_1, y_2)}^{\prime\prime}(t) \bigg\vert_{t = 1} < 0 \right\}.
\end{align*}
By the definitions of $\mathcal{N}_{\lambda}$, $\mathcal{N}_{\lambda}^{ + }$, $\mathcal{N}_{\lambda}^{ - }$, and the function $\psi_{(y_1, y_2)}(t)$, the following lemma clearly holds.
\begin{lemma}\label{lemma2.3}
 Let $(y_1, y_2)\in W_{0}^{1, \mathcal{H}}(\Omega) \times W_{0}^{1, \mathcal{H}}(\Omega)\backslash\{(0,0)\}$, then for $t > 0, t(y_1, y_2)\in\mathcal{N}_{\lambda}$ if and only if $\psi_{(y_1, y_2)}^{\prime}(t) = 0; ~ t(y_1, y_2)\in\mathcal{N}_{\lambda}^{ + }$ if and only if $\psi_{(y_1, y_2)}^{\prime}(t) = 0$ and $\psi_{(y_1, y_2)}^{\prime\prime}(t) > 0; ~ t(y_1, y_2)\in\mathcal{N}_{\lambda}^{ - }$ if and only if $\psi_{(y_1, y_2)}^{\prime}(t) = 0$ and $\psi_{(y_1, y_2)}^{\prime\prime}(t) < 0$.
\end{lemma}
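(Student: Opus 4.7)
The plan is to exploit the scaling identity
\begin{equation*}
\psi_{(ty_1,ty_2)}(s) \;=\; J(sty_1,sty_2) \;=\; \psi_{(y_1,y_2)}(st),
\end{equation*}
which reduces the conditions defining $\mathcal{N}_\lambda$, $\mathcal{N}_\lambda^+$, $\mathcal{N}_\lambda^0$, $\mathcal{N}_\lambda^-$ for the scaled pair $t(y_1,y_2)$ at the anchor $s=1$ to conditions on $\psi_{(y_1,y_2)}$ at the point $t$ itself. Because the equivalent definition of the Nehari manifold given just before the lemma is phrased in terms of $\psi'_{(\cdot,\cdot)}(1)=0$, this reduction is exactly what is needed.

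First, starting from that equivalent definition,
\begin{equation*}
t(y_1,y_2)\in\mathcal{N}_\lambda \iff \psi'_{(ty_1,ty_2)}(s)\big|_{s=1} = 0.
\end{equation*}
Differentiating the scaling identity in $s$ yields $\psi'_{(ty_1,ty_2)}(s) = t\,\psi'_{(y_1,y_2)}(st)$, so at $s=1$ we obtain $\psi'_{(ty_1,ty_2)}(1) = t\,\psi'_{(y_1,y_2)}(t)$. Since $t>0$, this vanishes precisely when $\psi'_{(y_1,y_2)}(t)=0$, which delivers the first equivalence.

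For the $\mathcal{N}_\lambda^\pm$ and $\mathcal{N}_\lambda^0$ statements, differentiating once more in $s$ gives $\psi''_{(ty_1,ty_2)}(s) = t^2\,\psi''_{(y_1,y_2)}(st)$, hence
\begin{equation*}
\psi''_{(ty_1,ty_2)}(1) \;=\; t^2\,\psi''_{(y_1,y_2)}(t).
\end{equation*}
The factor $t^2>0$ preserves signs, so $\psi''_{(ty_1,ty_2)}(1)$ is positive, zero, or negative exactly when $\psi''_{(y_1,y_2)}(t)$ is. Combining this with the first step yields the three claimed equivalences via the definitions of $\mathcal{N}_\lambda^+$, $\mathcal{N}_\lambda^0$, $\mathcal{N}_\lambda^-$. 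I expect no real obstacle here, since this is a direct chain-rule computation on a scalar-valued function; the only point that requires care is tracking the powers of $t$ produced when differentiating the composition $s\mapsto \psi_{(y_1,y_2)}(st)$, which one can cross-check against the explicit formulas given for $\psi'_{(\cdot,\cdot)}$ and $\psi''_{(\cdot,\cdot)}$ by plugging $(ty_1,ty_2)$ into each and factoring the common $t^{p-1},t^{q-1},t^{-\nu},t^{\kappa_1+\kappa_2+1}$ (resp. $t^{p-2}$, etc.) from each term.
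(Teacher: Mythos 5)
Your argument is correct: the chain-rule identities $\psi'_{(ty_1,ty_2)}(1)=t\,\psi'_{(y_1,y_2)}(t)$ and $\psi''_{(ty_1,ty_2)}(1)=t^{2}\,\psi''_{(y_1,y_2)}(t)$, together with $t>0$, give exactly the claimed equivalences, and they can be cross-checked against the explicit formulas for $\psi'$ and $\psi''$ as you note. The paper offers no written proof (it states that the lemma ``clearly holds'' from the definitions), and your computation is precisely the routine verification being implicitly invoked, so there is nothing to add.
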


We will prove that when the parameter $\lambda$ within a certain range, the two solutions of \eqref{eq} are in sets $\mathcal{N}_{\lambda}^{ + }$ and $\mathcal{N}_{\lambda}^{ - }$, respectively.

\section{The existence and multiplicity}

Firstly, we study the properties of the energy functional $J$ on $\mathcal{N}_{\lambda}$, $\mathcal{N}_{\lambda}^{ + }$, $\mathcal{N}_{\lambda}^{ - }$, respectively, and  prove that $\mathcal{N}_{\lambda}^{0} = \emptyset$ when $\lambda$ is small enough. Secondly, we prove the existence of convergent subsequences on $\mathcal{N}_{\lambda}^{ + }$ and $\mathcal{N}_{\lambda}^{-}$, respectively. Thirdly, it is proved that both two convergence points in $\mathcal{N}_{\lambda}^{ + }$ and $\mathcal{N}_{\lambda}^{-}$, respectively, are the solutions of \eqref{eq}. Finally, we generalize the results of the system \eqref{eq} to the case of $m>2$.

\begin{lemma}\label{bddblow}
Suppose $\lambda  > 0$ and assumptions $(H_1)$, $(H_2)$ hold, then $J(y_1, y_2)\vert_{\mathcal{N}_{\lambda}}$ is coercive.
\end{lemma}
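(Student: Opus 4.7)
The natural approach is the standard Nehari substitution: use the constraint to eliminate the highest-order term from $J$. Writing $A = \|(y_1, y_2)\|_{1,p}$, $B = \|(\nabla y_1, \nabla y_2)\|_{q,\eta}$, $C = \int_\Omega[a_1|y_1|^{1-\nu}+a_2|y_2|^{1-\nu}]dz$, $D = \int_\Omega |y_1|^{\kappa_1+1}|y_2|^{\kappa_2+1}dz$, and $K = \kappa_1+\kappa_2+2$, the Nehari identity $\psi'_{(y_1,y_2)}(1) = 0$ reads $A + B - C - \lambda K D = 0$. Solving for $\lambda D$ and substituting back into $J$ gives
\begin{equation*}
J(y_1, y_2) = \left(\frac{1}{p} - \frac{1}{K}\right) A + \left(\frac{1}{q} - \frac{1}{K}\right) B - \left(\frac{1}{1-\nu} - \frac{1}{K}\right) C.
\end{equation*}
Because $p < q < \kappa_i + 1$, one has $K > 2q > q > p > 1 > 1-\nu$, so all three bracketed coefficients are strictly positive; in particular $C$ is the only negative contribution.

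Next I would control $C$ from above by gradient norms. Hölder's inequality with exponents $p/(1-\nu)$ and $p/(p-1+\nu)$ together with Poincaré's inequality yields
\begin{equation*}
\int_\Omega a_i |y_i|^{1-\nu}\,dz \leq \|a_i\|_\infty |\Omega|^{\frac{p-1+\nu}{p}}\|y_i\|_p^{1-\nu} \leq c_0\|\nabla y_i\|_p^{1-\nu}, \quad i = 1, 2.
\end{equation*}
Setting $t_i = \|\nabla y_i\|_p$ and $s_i = \|\nabla y_i\|_{q,\eta}$, one obtains
\begin{equation*}
J(y_1, y_2) \geq c_1(t_1^p + t_2^p) + c_2(s_1^q + s_2^q) - c_3(t_1^{1-\nu} + t_2^{1-\nu})
\end{equation*}
for positive constants $c_1, c_2, c_3$. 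Since $1 - \nu < p$, each map $t \mapsto c_1 t^p - c_3 t^{1-\nu}$ has a global minimum on $[0, \infty)$ and diverges at $+\infty$, so the right-hand side is bounded below by a finite constant $-M$.

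The final step converts divergence of $\|(y_1, y_2)\| = \|\nabla y_1\|_\mathcal{H} + \|\nabla y_2\|_\mathcal{H}$ into divergence of the expression above. Assuming $\|(y_1^n, y_2^n)\| \to \infty$, I may relabel so that $\|\nabla y_1^n\|_\mathcal{H} \to \infty$. Then eventually $\|\nabla y_1^n\|_\mathcal{H} > 1$, and Lemma \ref{prop2.2(liu)}(iii) gives
\begin{equation*}
(t_1^n)^p + (s_1^n)^q = \rho_\mathcal{H}(\nabla y_1^n) \geq \|\nabla y_1^n\|_\mathcal{H}^p \to \infty,
\end{equation*}
so along a further subsequence either $t_1^n \to \infty$ or $s_1^n \to \infty$. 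In the first case $c_1(t_1^n)^p - c_3(t_1^n)^{1-\nu} \to \infty$; in the second $c_2(s_1^n)^q \to \infty$; in both cases the remaining terms stay bounded below by $-M$, giving $J(y_1^n, y_2^n) \to \infty$. The only mild obstacle is precisely this last transfer from the Musielak-Orlicz norm to the Lebesgue and weighted Lebesgue norms, which leans on property (iii) of Lemma \ref{prop2.2(liu)}; the rest is routine Hölder-Poincaré bookkeeping combined with the positivity of all coefficients guaranteed by $K > q > p > 1-\nu$.
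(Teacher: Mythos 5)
Your proof is correct and follows essentially the same route as the paper's: eliminate the coupling term via the Nehari identity, observe that all resulting coefficients except the one on the singular term are positive, control $\int [a_1|y_1|^{1-\nu}+a_2|y_2|^{1-\nu}]dz$ by H\"older and Poincar\'e, and conclude from $p>1-\nu$. The only (immaterial) difference is bookkeeping at the end: the paper converts everything into the Musielak--Orlicz norms $\|\nabla y_i\|_{\mathcal H}$ via Lemma \ref{prop2.2(liu)}(iii) and splits into three cases on which of them diverges, whereas you keep $\|\nabla y_i\|_p$ and $\|\nabla y_i\|_{q,\eta}$ separate and extract subsequences.
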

\begin{proof}
From the definition of the Nehari manifold, one has
\begin{equation*}
  \begin{split}
     \lambda(\kappa_1 + \kappa_2 + 2)\int_{\Omega}\vert y_1\vert^{\kappa_1 + 1}\vert y_2\vert^{\kappa_2 + 1}dz =& \| (y_1, y_2)\|_{1,p} +\|(\nabla y_1,\nabla y_2)\|_{q,\eta}\\
     & - \int_{\Omega}\left[a_1\vert y_1 \vert^{1 - \nu} + a_2\vert y_2 \vert^{1 - \nu}\right]dz.
  \end{split}
\end{equation*}

Taking into account that $(H_1)$, $(H_2)$ hold, and the fact $p < q < \kappa_1 + \kappa_2 + 2$, one has
\begin{equation*}
  \begin{split}
      J(y_1, y_2) = &\left( \frac{1}{p} - \frac{1}{\kappa_1 + \kappa_2 + 2} \right )\int_{\Omega}\vert \nabla y_1 \vert^pdz + \left(\frac{1}{q} - \frac{1}{\kappa_1 + \kappa_2 + 2}\right )\|\nabla y_1 \|_{q,\eta}^{q}\\
      & + \left(\frac{1}{p} - \frac{1}{\kappa_1 + \kappa_2 + 2}\right )\int_{\Omega}\vert \nabla y_2 \vert^pdz + \left(\frac{1}{q} - \frac{1}{\kappa_1 + \kappa_2 + 2}\right )\|\nabla y_2 \|_{q,\eta}^{q}\\
      & + \left(\frac{1}{\kappa_1 + \kappa_2 + 2} - \frac{1}{1 - \nu}\right )\int_{\Omega}\left[a_1\vert y_1 \vert^{1 - \nu} + a_2\vert y_2 \vert^{1 - \nu}\right] dz\\
      \geq & \left( \frac{1}{q} - \frac{1}{\kappa_1 + \kappa_2 + 2} \right )\left[\rho_{\mathcal{H}}(\nabla y_1)+\rho_{\mathcal{H}}(\nabla y_2)\right] \\
       &+ \left(\frac{1}{\kappa_1 + \kappa_2 + 2} - \frac{1}{1 - \nu}\right )\int_{\Omega}\left[a_1\vert y_1 \vert^{1 - \nu} + a_2\vert y_2 \vert^{1 - \nu}\right] dz\\
  \end{split}
\end{equation*}

The following three cases are discussed.

\begin{itemize}[itemindent = 3.5em]
\item [Case (1):] $\|\nabla y_1\|_{\mathcal{H}}\rightarrow \infty$, $\|\nabla y_2\|_{\mathcal{H}}$ bounded.
\end{itemize}

By the use of Lemma \ref{prop2.2(liu)}$(iii)$, H\"{o}lder inequality, Poincar\'{e}'s inequality and Lemma \ref{prop2.2(Aro)}$(i)$, one has
\begin{equation*}
  \begin{split}
      J(y_1, y_2)
\geq& \left( \frac{1}{q} - \frac{1}{\kappa_1 + \kappa_2 + 2} \right )\rho_{\mathcal{H}}(\nabla y_1)- C_1\|y_1\|_{{1, p}}^{1 - \nu} - C_2\|y_2\|_{{1, p}}^{1 - \nu}\\
      \geq& C\|\nabla y_1\|_{\mathcal{H}}^{p} - C_1\|y_1\|_{{1, p}}^{1 - \nu} - C_2\|y_2\|_{{1, p}}^{1 - \nu}\\
       \geq &C\|\nabla y_1\|_{\mathcal{H}}^{p} - C_3\|\nabla y_1\|_{\mathcal{H}}^{1 - \nu} - C_4\|\nabla y_2\|_{\mathcal{H}}^{1 - \nu},
       \end{split}
\end{equation*}
where $C, C_1, C_2,C_3,C_4$ are positive constants. It is worth noting that $C_1, C_2$ comes from H\"{o}lder inequality and Poincar\'{e}'s inequality. Since $p > 1 - \nu>0$, we have $J(y_1, y_2)\rightarrow \infty$ as $\|\nabla y_1\|_{\mathcal{H}} \rightarrow \infty$.

\begin{itemize}[itemindent = 3.5em]
\item [Case (2):] $\|\nabla y_1\|_{\mathcal{H}}$ bounded, $\|\nabla y_2\|_{\mathcal{H}}\rightarrow  \infty$.
\end{itemize}
\begin{equation*}
  \begin{split}
      J(y_1, y_2) \geq & \left( \frac{1}{q} - \frac{1}{\kappa_1 + \kappa_2 + 2} \right )\rho_{\mathcal{H}}(\nabla y_2)\\
        &+ \left(\frac{1}{\kappa_1 + \kappa_2 + 2} - \frac{1}{1 - \nu}\right )\int_{\Omega}\left[a_1\vert y_1 \vert^{1 - \nu} + a_2\vert y_2 \vert^{1 - \nu}\right] dz\\
       \geq &C\|\nabla y_2\|_{\mathcal{H}}^{p} - C_3\|\nabla y_1\|_{\mathcal{H}}^{1 - \nu} - C_4\|\nabla y_2\|_{\mathcal{H}}^{1 - \nu},
  \end{split}
\end{equation*}
 Thus, we have $J(y_1, y_2)\rightarrow \infty$ as $\|\nabla y_2\|_{\mathcal{H}} \rightarrow \infty$.

\begin{itemize}[itemindent = 3.5em]
\item [Case (3):] $\|\nabla y_1\|_{\mathcal{H}}\rightarrow \infty$, $\|\nabla y_2\|_{\mathcal{H}} \rightarrow \infty$.
\end{itemize}
\begin{equation*}
  \begin{split}
      J(y_1, y_2) \geq & \left( \frac{1}{q} - \frac{1}{\kappa_1 + \kappa_2 + 2} \right )\left[\rho_{\mathcal{H}}(\nabla y_1)+\rho_{\mathcal{H}}(\nabla y_2)\right]\\
        &+ \left(\frac{1}{\kappa_1 + \kappa_2 + 2} - \frac{1}{1 - \nu}\right )\int_{\Omega}\left[a_1\vert y_1 \vert^{1 - \nu} + a_2\vert y_2 \vert^{1 - \nu}\right] dz\\
       \geq &C\|\nabla y_1\|_{\mathcal{H}}^{p} + C\|\nabla y_2\|_{\mathcal{H}}^{p} - C_3\|\nabla y_1\|_{\mathcal{H}}^{1 - \nu} - C_4\|\nabla y_2\|_{\mathcal{H}}^{1 - \nu},
  \end{split}
\end{equation*}
Thus, we have $J(y_1, y_2)\rightarrow \infty$ as $\|\nabla y_1\|_{\mathcal{H}}\rightarrow \infty$ and $\|\nabla y_2\|_{\mathcal{H}} \rightarrow \infty$.

Again since the definition of  $\|(y_1,y_2)\|$, we know that $\|(y_1,y_2)\|\rightarrow \infty$ if and only if $\|\nabla y_1\|_{\mathcal{H}}\rightarrow \infty$ or $\|\nabla y_2\|_{\mathcal{H}}\rightarrow \infty$, so the functional $J(y_1, y_2)$ is coercive on $\mathcal{N}_{\lambda}$.
\end{proof}
\begin{lemma}\label{3.2}
Suppose the assumptions $(H_1)$, $(H_2)$, $(H_{3})$ hold, then there exists $\lambda_{0} > 0$ such that $\mathcal{N}_{\lambda}^{0} = \emptyset$ for any $\lambda\in(0, \lambda_{0}$).
\end{lemma}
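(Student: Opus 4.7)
The plan is a proof by contradiction, mirroring the classical Nehari/fibering argument but adapted to the coexistence of a singular and a superlinear term. I would assume, for some $\lambda>0$, that $(y_1,y_2)\in\mathcal{N}_\lambda^0$, and set $\psi:=\psi_{(y_1,y_2)}$ and $\alpha:=\kappa_1+\kappa_2+2$; the hypothesis gives $\psi'(1)=\psi''(1)=0$. The first step is to take two linear combinations of these identities that eliminate each ``bad'' integral in turn. The combination $\nu\psi'(1)+\psi''(1)=0$ cancels the singular integral and produces
\[
(p-1+\nu)\|(y_1,y_2)\|_{1,p}+(q-1+\nu)\|(\nabla y_1,\nabla y_2)\|_{q,\eta}=\lambda\alpha(\alpha-1+\nu)\int_\Omega|y_1|^{\kappa_1+1}|y_2|^{\kappa_2+1}\,dz,\qquad(\star)
\]
while $(\alpha-1)\psi'(1)-\psi''(1)=0$ cancels the $\lambda$-term and yields
\[
(\alpha-p)\|(y_1,y_2)\|_{1,p}+(\alpha-q)\|(\nabla y_1,\nabla y_2)\|_{q,\eta}=(\alpha-1+\nu)\int_\Omega[a_1|y_1|^{1-\nu}+a_2|y_2|^{1-\nu}]\,dz.\qquad(\star\star)
\]
All four LHS coefficients are strictly positive because $0<\nu<1<p<q<\kappa_i+1$ forces $q<\alpha$.

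From $(\star\star)$, I would discard the nonnegative $\|\cdot\|_{q,\eta}$ piece, estimate $a_i\le\|a_i\|_\infty$, and control each $\int_\Omega|y_i|^{1-\nu}\,dz$ via Hölder (legitimate since $1-\nu<p$ and $|\Omega|<\infty$) followed by Poincaré, to reach
\[
(\alpha-p)\bigl[\|\nabla y_1\|_p^p+\|\nabla y_2\|_p^p\bigr]\le C_{\#}\bigl[\|\nabla y_1\|_p^{1-\nu}+\|\nabla y_2\|_p^{1-\nu}\bigr],
\]
where $C_{\#}$ depends only on $\alpha,\nu,|\Omega|,\|a_i\|_\infty$ and the Poincaré constant, \emph{not} on $\lambda$. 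Since $p>1-\nu$, a short two-case comparison (based on which of $\|\nabla y_1\|_p$, $\|\nabla y_2\|_p$ is larger) then delivers a $\lambda$-independent upper bound $\|\nabla y_i\|_p\le R$. Next, from $(\star)$ I would again drop the $\|\cdot\|_{q,\eta}$ term, estimate the nonlinear integrand by Young's inequality with conjugate exponents $\alpha/(\kappa_1+1)$ and $\alpha/(\kappa_2+1)$, and invoke the Sobolev embedding $W_0^{1,p}(\Omega)\hookrightarrow L^\alpha(\Omega)$, whose admissibility is exactly $(H_3)$ (namely $\alpha<p^*$). Combining these with the bound $\|\nabla y_i\|_p\le R$ and the inequality $s^\alpha\le R^{\alpha-p}s^p$ (valid for $s\in[0,R]$ since $\alpha>p$) rewrites $(\star)$ as
\[
(p-1+\nu)\bigl[\|\nabla y_1\|_p^p+\|\nabla y_2\|_p^p\bigr]\le\lambda\,C_{\flat} R^{\alpha-p}\bigl[\|\nabla y_1\|_p^p+\|\nabla y_2\|_p^p\bigr].
\]
Lemma \ref{prop2.2(Aro)}$(i)$ embeds $(y_1,y_2)$ into $W_0^{1,p}(\Omega)\times W_0^{1,p}(\Omega)$, so Poincaré together with $(y_1,y_2)\neq(0,0)$ forces $\|\nabla y_1\|_p^p+\|\nabla y_2\|_p^p>0$; dividing then gives $p-1+\nu\le\lambda C_{\flat} R^{\alpha-p}$. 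Setting $\lambda_0:=(p-1+\nu)/(C_{\flat} R^{\alpha-p})>0$ therefore contradicts any $\lambda\in(0,\lambda_0)$.

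I expect the main obstacle to be the extraction of the uniform bound $R$: the scalar inequality $x^p+y^p\le C(x^{1-\nu}+y^{1-\nu})$ on $[0,\infty)^2$ does not instantly give $\max(x,y)\le R$ without a small case split on which variable dominates; once that is made, though, the estimate $s^p\le 2Cs^{1-\nu}$ yields $R=(2C)^{1/(p-1+\nu)}$ using $p>1-\nu$. All remaining steps are routine bookkeeping with Hölder, Young, Poincaré, and two Sobolev-type embeddings, arranged so that every constant is genuinely $\lambda$-independent.
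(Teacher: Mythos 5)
Your proposal is correct and follows essentially the same strategy as the paper: assume $\mathcal{N}_\lambda^0\neq\emptyset$, combine $\psi'(1)=\psi''(1)=0$ once to eliminate the $\lambda$-term and obtain a $\lambda$-independent a priori bound on $\|(y_1,y_2)\|_{1,p}$ via Hölder--Poincaré, then combine again to bound $\lambda$ from below by a positive constant. The only (harmless) deviation is technical: you estimate the coupling integral by Young's inequality at the integrand level plus the single embedding $W_0^{1,p}(\Omega)\hookrightarrow L^{\kappa_1+\kappa_2+2}(\Omega)$, whereas the paper uses a Hölder inequality with tailored exponents $m_1,m_2$ followed by Young's inequality on the norms — both are justified by $(H_3)$ and yield the same conclusion.
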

\begin{proof}
If $\lambda > 0$ such that $\mathcal{N}_{\lambda}^{0}\neq\emptyset$, then, by the definition of $\mathcal{N}_{\lambda}$, for any $(y_1, y_2)\in \mathcal{N}_{\lambda}^{0}\subset \mathcal{N}_{\lambda}$, one has
\begin{equation}\label{uv21}
    \begin{split}
\lambda(\kappa_1 + \kappa_2 + 2)&(\kappa_1 + \kappa_2 + 1)\int_{\Omega}\vert y_1\vert^{\kappa_1 + 1}\vert y_2\vert^{\kappa_2 + 1}dz\\
=& (\kappa_1 + \kappa_2 + 1)\| (y_1, y_2)\|_{1,p}+ (\kappa_1 + \kappa_2 + 1)\| (\nabla y_1, \nabla y_2)\|_{q,\eta}\\
&- (\kappa_1 + \kappa_2 + 1)\int_{\Omega}\left[a_1\vert y_1 \vert^{1 - \nu} + a_2\vert y_2 \vert^{1 - \nu}\right]dz.
    \end{split}
\end{equation}
Combing with \eqref{uv21} and the definition of $\mathcal{N}_{\lambda}^{0}$, there holds
\begin{equation}\label{uv < uvgamma}
 \begin{split}
   0 = &(\kappa_1 + \kappa_2 + 2 - p)\|  (y_1, y_2) \|_{1,p} +  (\kappa_1 + \kappa_2 + 2 - q)\|  (\nabla {y_1}, \nabla y_2) \|_{q,\eta}\\
   &- (\kappa_1 + \kappa_2 + \nu + 1)\int_{\Omega}\left[a_1\vert y_1 \vert^{1 - \nu} + a_2\vert y_2 \vert^{1 - \nu}\right]dz.
 \end{split}
\end{equation}
By using H\"{o}lder inequality, Poincar\'{e}'s inequality, assumption $(H_2)$, and the fact that the function $t^{\frac{1 - \nu}{p}}$ is concave about $t$, there is
\begin{equation}\label{gamma<1,h,0}
   \int_{\Omega}\left[a_1\vert y_1 \vert^{1 - \nu} + a_2\vert y_2 \vert^{1 - \nu}\right]dz\leq C_1\|y_1\|_{{1, p}}^{1 - \nu} + C_2\|y_2\|_{{1, p}}^{1 - \nu}\leq C_4 \|(y_1, y_2)\|_{1,p}^{\frac{1 - \nu}{p}},
\end{equation}
which combining \eqref{uv < uvgamma} yields
\begin{equation}\label{4.23}
   \|(y_1, y_2)\|_{1,p} \leq \left(\frac{(\kappa_1 + \kappa_2 + \nu + 1)C_4}{\kappa_1 + \kappa_2 + 2 - p} \right)^{\frac{p}{p+\gamma-1}}:=C_5.
\end{equation}

Again since $(y_1, y_2)\in \mathcal{N}_{\lambda}^{0}$, and $(H_1), (H_2), (H_{3})$ hold, one has
\begin{equation*}
  (p - 1)\|  (y_1, y_2) \|_{1,p}  \leq \lambda(\kappa_1 + \kappa_2 + 2)(\kappa_1 + \kappa_2 + 1)\int_{\Omega}\vert y_1\vert^{\kappa_1 + 1}\vert y_2\vert^{\kappa_2 + 1}dz,
\end{equation*}
which means
\begin{equation}\label{uvualpha + 1vbeta + 1}
 (p - 1) \| (y_1, y_2) \|_{1,p} \leq \lambda C_{6} \| y_1\|_{1, p}^{\kappa_1 + 1}\| y_2\|_{1, p}^{\kappa_2 + 1}\leq \lambda C_{7}\| (y_1, y_2) \|_{1,p}^{\frac{\kappa_1 + \kappa_2 + 2}{p}},
\end{equation}
where $C_{6}$, $C_{7}$ are two positive constants. In fact, by assumption $(H_{3})$, we know $\kappa_1 + \kappa_2 + 2 < p^*$, hence $\frac{p^*}{\kappa_1 + 1} - \frac{p^*}{p^* - (\kappa_2 + 1)} > 0$. Given $ \epsilon_{0}$ such that
\begin{equation}\label{epsilon0}
  0 < \epsilon_{0} < \frac{p^*}{\kappa_1 + 1} - \frac{p^*}{p^* - (\kappa_2 + 1)},
\end{equation}
let
\begin{equation*}
  m_1: = p^* - \epsilon_{0}(\kappa_1 + 1), \ \ m_2: = \frac{[p^* - \epsilon_{0}(\kappa_1 + 1)](\kappa_2 + 1)}{p^* - (\epsilon_{0} + 1)(\kappa_1 + 1)}.
\end{equation*}
Then, $m_1\in(\kappa_1 + 1, p^*), $ $m_2\in(\kappa_2 + 1, p^*)$.
By using H\"{o}lder inequality and Poincar\'{e}'s inequality, we have
\begin{equation}\label{uvholder}
   \begin{split}
      \int_{\Omega}\vert y_1\vert^{\kappa_1 + 1}\vert y_2\vert^{\kappa_2 + 1}dz \leq& C_8 \| y_1\|_{m_1}^{\kappa_1 + 1}\| y_2\|_{m_2}^{\kappa_2 + 1}\\
      \leq&  C_{9} \|y_1\|_{1, p}^{\kappa_1 + 1}\|y_2\|_{1, p}^{\kappa_2 + 1},
   \end{split}
\end{equation}
here, $C_8,C_{9}$ are two positive constants. By Young's inequality and the properties of concave function $t\rightarrow t^{1/p}$, one has,
\begin{equation}\label{young}
   \begin{split}
      \|y_1\|_{1, p}^{\frac{\kappa_1 + 1}{\kappa_1 + \kappa_2 + 2}}\|y_2\|_{1, p}^{\frac{\kappa_2 + 1}{\kappa_1 + \kappa_2 + 2}}&\leq\frac{(\kappa_1 + 1)\|y_1\|_{1, p}}{\kappa_1 + \kappa_2 + 2} + \frac{(\kappa_2 + 1)\|y_2\|_{1, p}}{\kappa_1 + \kappa_2 + 2}\\
       &\leq  \|y_1\|_{1, p} + \|y_2\|_{1, p}\leq C_{10} \|(y_1, y_2)\|_{1,p}^{\frac{1}{p}},
   \end{split}
\end{equation}
here $C_{10}$ is a positive constant. From \eqref{uvholder} and \eqref{young} we know that \eqref{uvualpha + 1vbeta + 1} holds.

From \eqref{uvualpha + 1vbeta + 1} we have
\begin{equation}\label{18}
  \lambda\geq(p-1)C_7^{-1}\|(y_1,y_2)\|_{1,p}^{1-\frac{\kappa_1+\kappa_2+2}{p}}=(p-1)C_7^{-1}\|(y_1,y_2)\|_{1,p}^{\frac{p-(\kappa_1+\kappa_2+2)}{p}}.
\end{equation}
Now, $\frac{p-(\kappa_1+\kappa_2+2)}{p}<0$ and by \eqref{4.23}, one has $\|(y_1,y_2)\|_{1,p}\leq C_5$. Thus, \eqref{18} comes to
\begin{equation*}
  \lambda\geq (p-1)C_7^{-1}C_5^{-\frac{p-(\kappa_1+\kappa_2+2)}{p}}.
\end{equation*}
Then the lemma holds with $\lambda_0:=(p-1)C_7^{-1}C_5^{-\frac{p-(\kappa_1+\kappa_2+2)}{p}}$.
 %we know when $\lambda\rightarrow 0^{ + }$, $\| (y_1, y_2) \|_{1,p}\rightarrow  + \infty$, combining with \eqref{4.23}, we have a contradiction. So we can state that there exists $\lambda_{0} > 0$ such that for $\lambda\in(0, \lambda_{0}$), the set $\mathcal{N}_{\lambda}^{0} = \emptyset$.
\end{proof}

\begin{lemma}\label{jn +  +  < 0}
Suppose the assumptions $(H_1)$, $(H_2)$ hold and $\mathcal{N}_{\lambda}^{ + }\neq \emptyset$, then for all $(y_1, y_2)\in\mathcal{N}_{\lambda}^{ + }$, $J(y_1, y_2) < 0$.
\end{lemma}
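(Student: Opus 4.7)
The plan is to use the two identities that encode membership in $\mathcal{N}_\lambda^+$ (namely $\psi'_{(y_1,y_2)}(1)=0$ together with $\psi''_{(y_1,y_2)}(1)>0$) to eliminate the $\lambda$-integral from the expression for $J(y_1,y_2)$ and then turn $J(y_1,y_2)<0$ into a linear comparison between three nonnegative quantities, which will follow from the $\mathcal{N}_\lambda^+$-inequality once the exponents $1-\nu<p<q<\kappa_1+\kappa_2+2$ are used.

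Concretely, set for brevity $A:=\|(y_1,y_2)\|_{1,p}$, $B:=\|(\nabla y_1,\nabla y_2)\|_{q,\eta}$, $C:=\int_{\Omega}[a_1|y_1|^{1-\nu}+a_2|y_2|^{1-\nu}]\,dz$, $D:=\lambda(\kappa_1+\kappa_2+2)\int_{\Omega}|y_1|^{\kappa_1+1}|y_2|^{\kappa_2+1}\,dz$, and $r:=\kappa_1+\kappa_2+2$. The Nehari identity $\psi'_{(y_1,y_2)}(1)=0$ reads $A+B-C-D=0$, so $D=A+B-C$. Substituting this into the defining expression for $J$ yields the clean form
\begin{equation*}
J(y_1,y_2)=\left(\frac{1}{p}-\frac{1}{r}\right)A+\left(\frac{1}{q}-\frac{1}{r}\right)B-\left(\frac{1}{1-\nu}-\frac{1}{r}\right)C,
\end{equation*}
in which all three coefficients have a definite sign since $1-\nu<1<p<q<r$. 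Next, plugging $D=A+B-C$ into $\psi''_{(y_1,y_2)}(1)>0$ (using \eqref{2partialpsit}) gives $(p-r)A+(q-r)B+(\nu+r-1)C>0$, i.e.
\begin{equation*}
(\nu+r-1)C>(r-p)A+(r-q)B.
\end{equation*}

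The target inequality $J(y_1,y_2)<0$ is equivalent (after multiplying the displayed form of $J$ by $r$) to $\frac{r-p}{p}A+\frac{r-q}{q}B<\frac{r+\nu-1}{1-\nu}C$. I would obtain this in two elementary steps: first divide the $\mathcal{N}_\lambda^+$-inequality by $1-\nu$ to get $\frac{r+\nu-1}{1-\nu}C>\frac{(r-p)A+(r-q)B}{1-\nu}$; second, since $1-\nu<p$ and $1-\nu<q$ while $r-p,r-q>0$ and $A,B\geq 0$, one has $\frac{r-p}{1-\nu}A\geq\frac{r-p}{p}A$ and $\frac{r-q}{1-\nu}B\geq\frac{r-q}{q}B$. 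Chaining these gives $\frac{r+\nu-1}{1-\nu}C>\frac{r-p}{p}A+\frac{r-q}{q}B$, which is exactly $J(y_1,y_2)<0$.

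I do not expect any genuine obstacle here; the only subtlety is to confirm that the strict inequality is preserved. This is automatic because the $\mathcal{N}_\lambda^+$-inequality is already strict, so the first step of the chain is strict regardless of whether $A$ or $B$ vanishes (and in fact the condition $(\nu+r-1)C>(r-p)A+(r-q)B$ together with $(y_1,y_2)\neq(0,0)$ forces $C>0$, since $p,q<r$ would otherwise give a contradiction).
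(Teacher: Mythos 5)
Your proof is correct and follows essentially the same route as the paper: both arguments combine the Nehari identity $\psi'_{(y_1,y_2)}(1)=0$ with the strict inequality $\psi''_{(y_1,y_2)}(1)>0$ defining $\mathcal{N}_\lambda^{+}$ and then exploit the ordering $1-\nu<p<q<\kappa_1+\kappa_2+2$ of the exponents; the only cosmetic difference is that you eliminate the $\lambda$-integral $D$ and lower-bound $C$, whereas the paper eliminates $C$ and upper-bounds the $\lambda$-integral before substituting into $J$. Your handling of strictness (noting that $\psi''_{(y_1,y_2)}(1)>0$ already forces $C>0$) is a clean touch, but no new idea is involved.
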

\begin{proof}
For $(y_1, y_2)\in\mathcal{N}_{\lambda}^{ + }$, by the definition of $\mathcal{N}_{\lambda}^ + $, there holds
\begin{equation}\label{n +  + u}
  \begin{split}
    \lambda(\kappa_1 + \kappa_2 + 2)&(\kappa_1 + \kappa_2 + 1)\int_{\Omega}\vert y_1\vert^{\kappa_1 + 1}\vert y_2\vert^{\kappa_2 + 1}dz\\
   \leq & (p - 1) \| (y_1, y_2)\|_{1,p} + (q - 1)\| (\nabla y_1, \nabla y_2)\|_{q,\eta} \\
   & + \nu\int_{\Omega}\left[a_1\vert y_1 \vert^{1 - \nu} + a_2\vert y_2 \vert^{1 - \nu}\right]dz.
    \end{split}
\end{equation}
Again since $\mathcal{N}_{\lambda}^{ + }\subset\mathcal{N}_{\lambda}$, from the definition of $\mathcal{N}_{\lambda}$, we have
\begin{equation}\label{nuau}
  \begin{split}
   \nu\int_{\Omega}\left[a_1\vert y_1 \vert^{1 - \nu} + a_2\vert y_2 \vert^{1 - \nu}\right]dz =& \nu\| (y_1, y_2)\|_{1,p} + \nu\| (\nabla y_1, \nabla y_2)\|_{q,\eta}\\
   & - \lambda\nu(\kappa_1 + \kappa_2 + 2)\int_{\Omega}\vert y_1\vert^{\kappa_1 + 1}\vert y_2\vert^{\kappa_2 + 1}dz.
     \end{split}
\end{equation}
Using \eqref{n +  + u} in \eqref{nuau}, we have
\begin{equation}\label{lambdauv1}
  \begin{split}
   \lambda\int_{\Omega}\vert y_1\vert^{\kappa_1 + 1}\vert y_2\vert^{\kappa_2 + 1}dz\leq&\frac{p - 1 + \nu}{(\kappa_1 + \kappa_2 + 2)(\kappa_1 + \kappa_2 + 1 + \nu)} \| (y_1, y_2)\|_{1,p}\\
   & + \frac{q - 1 + \nu}{(\kappa_1 + \kappa_2 + 2)(\kappa_1 + \kappa_2 + 1 + \nu)}\| (\nabla y_1, \nabla y_2)\|_{q,\eta}.
   \end{split}
\end{equation}
Combining the definition of the functional $J(y_1, y_2)$ with \eqref{nuau}, \eqref{lambdauv1}, for all $(y_1, y_2)\in \mathcal{N}_{\lambda}^{ + }$,
\begin{equation*}
   \begin{split}
      J(y_1, y_2) = &\left( \frac{1}{p} - \frac{1}{1 - \nu} \right )\| (y_1, y_2) \|_{1,p} + \left(\frac{1}{q} - \frac{1}{1 - \nu}\right )\| (\nabla y_1, \nabla y_2)\|_{q,\eta}\\
      &  +\lambda \left[\frac{(\kappa_1 + \kappa_2 + 2)}{1 - \nu} - 1 \right ]\int_{\Omega}\vert y_1\vert^{\kappa_1 + 1}\vert y_2\vert^{\kappa_2 + 1}dz\\
      \leq &\left( \frac{1}{p} - \frac{1}{1 - \nu} \right )\| (y_1, y_2) \|_{1,p} + \left(\frac{1}{q} - \frac{1}{1 - \nu}\right )\|(\nabla {y_1}, \nabla y_2)\|_{q,\eta}\\
      & + \frac{1}{1 - \nu}\frac{p - 1 + \nu}{\kappa_1 + \kappa_2 + 2} \| (y_1, y_2)\|_{1,p} + \frac{1}{1 - \nu}\frac{q - 1 + \nu}{\kappa_1 + \kappa_2 + 2}\|(\nabla {y_1}, \nabla y_2)\|_{q,\eta}\\
       = &\frac{p - 1 + \nu}{1 - \nu}\left( \frac{1}{\kappa_1 + \kappa_2 + 2} - \frac{1}{p}\right)\| (y_1, y_2)\|_{1,p}\\
      & + \frac{q - 1 + \nu}{1 - \nu}\left( \frac{1}{\kappa_1 + \kappa_2 + 2} - \frac{1}{q}\right)\|(\nabla {y_1}, \nabla y_2)\|_{q,\eta}.
    \end{split}
\end{equation*}
Since $1 < p < q < \kappa_1 + \kappa_2 + 2$, we have $J(y_1, y_2) < 0$. Which means $\inf_{\mathcal{N}_{\lambda}^{ + }}J(y_1, y_2) < 0$.
\end{proof}

\begin{lemma}\label{lem3.4}
Suppose the assumptions $(H_1)$, $(H_2)$, $(H_{3})$ hold. If $\mathcal{N}_{\lambda}^{ - }\neq \emptyset$ for some $\lambda>0$, then there exists $\lambda_1 > 0$ such that for all $\lambda\in(0, \lambda_1$) and $(y_1, y_2)\in\mathcal{N}_{\lambda}^{ - }$, $J(y_1, y_2) > 0$.
\end{lemma}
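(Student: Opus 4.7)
The plan is to combine two bounds on $\|(y_1,y_2)\|_{1,p}$ for elements of $\mathcal{N}_\lambda^-$: a lower bound coming from $\psi_{(y_1,y_2)}''(1)<0$ that blows up as $\lambda\downarrow 0$, and a $\lambda$-independent threshold above which an a priori estimate for $J(y_1,y_2)$ becomes strictly positive. For $\lambda$ small enough, the first exceeds the second and the conclusion follows.

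For the blow-up lower bound, I would start from the $\mathcal{N}_\lambda^-$ inequality, discard the nonnegative singular contribution $\nu\int_\Omega[a_1|y_1|^{1-\nu}+a_2|y_2|^{1-\nu}]dz$, and apply the H\"older--Young--concavity chain of Lemma~\ref{3.2} (see \eqref{uvholder}--\eqref{young}) to obtain
\begin{align*}
(p-1)\|(y_1,y_2)\|_{1,p} &< \lambda(\kappa_1+\kappa_2+2)(\kappa_1+\kappa_2+1)\int_\Omega|y_1|^{\kappa_1+1}|y_2|^{\kappa_2+1}dz \\
&\leq \lambda\, C\,\|(y_1,y_2)\|_{1,p}^{(\kappa_1+\kappa_2+2)/p},
\end{align*}
with $C>0$ depending only on $p,q,\kappa_1,\kappa_2,\Omega$. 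Since $\kappa_1+\kappa_2+2>p$ by $(H_3)$, rearranging gives $\|(y_1,y_2)\|_{1,p}>B(\lambda)$ for an explicit function $B$ with $B(\lambda)\to\infty$ as $\lambda\downarrow 0$.

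For the threshold estimate on $J$, I would use the Nehari identity to eliminate the term $\lambda\int_\Omega|y_1|^{\kappa_1+1}|y_2|^{\kappa_2+1}dz$ from $J(y_1,y_2)$, recovering the representation at the opening of the proof of Lemma~\ref{bddblow}:
\begin{align*}
J(y_1,y_2) &= \Big(\tfrac{1}{p}-\tfrac{1}{\kappa_1+\kappa_2+2}\Big)\|(y_1,y_2)\|_{1,p} + \Big(\tfrac{1}{q}-\tfrac{1}{\kappa_1+\kappa_2+2}\Big)\|(\nabla y_1,\nabla y_2)\|_{q,\eta} \\
&\quad -\tfrac{\kappa_1+\kappa_2+1+\nu}{(1-\nu)(\kappa_1+\kappa_2+2)}\int_\Omega[a_1|y_1|^{1-\nu}+a_2|y_2|^{1-\nu}]dz.
\end{align*}
Discarding the nonnegative $(\nabla y_1,\nabla y_2)$-term and controlling the singular integral by \eqref{gamma<1,h,0} yields $J(y_1,y_2)\geq A_1\|(y_1,y_2)\|_{1,p}-A_2\|(y_1,y_2)\|_{1,p}^{(1-\nu)/p}$, where $A_1,A_2>0$ depend only on $p,q,\nu,\kappa_1,\kappa_2,\Omega,\|a_1\|_\infty,\|a_2\|_\infty$. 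Because $p-1+\nu>0$, this is strictly positive once $\|(y_1,y_2)\|_{1,p}>M_0:=(A_2/A_1)^{p/(p-1+\nu)}$, and $M_0$ is $\lambda$-independent.

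Choosing $\lambda_1>0$ so that $B(\lambda)>M_0$ for every $\lambda\in(0,\lambda_1)$ closes the argument, and the explicit formula for $B$ makes $\lambda_1$ a closed-form constant in the data. The main technical obstacle is the bookkeeping: one must verify carefully that $C,A_1,A_2,M_0$ are genuinely independent of $\lambda$ and of the particular element of $\mathcal{N}_\lambda^-$, so that the comparison $B(\lambda)>M_0$ is well posed. Everything else reduces to H\"older, Poincar\'e, and the Musielak--Orlicz embeddings of Lemma~\ref{prop2.2(Aro)}, which are already in play in the proofs of Lemmas~\ref{bddblow} and \ref{3.2}.
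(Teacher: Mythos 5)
Your proposal is correct and follows essentially the same route as the paper: the same lower bound $\|(y_1,y_2)\|_{1,p}>B(\lambda)$ from the $\mathcal{N}_\lambda^-$ inequality combined with \eqref{uvualpha + 1vbeta + 1}, and the same Nehari-identity elimination of the $\lambda$-term followed by the concavity/Poincar\'e estimate \eqref{gamma<1,h,0}; the paper merely phrases your second step as a contradiction (assuming $J(y_1,y_2)\le 0$ to derive the $\lambda$-independent bound \eqref{juu < 0}), which is exactly the contrapositive of your direct positivity criterion $\|(y_1,y_2)\|_{1,p}>M_0\Rightarrow J>0$. The resulting threshold $\lambda_1$ is the same closed-form constant in both arguments.
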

\begin{proof}
On the one hand, for $(y_1, y_2)\in\mathcal{N}_{\lambda}^{ - }$, we have
\begin{equation}\label{n--u}
 \begin{split}
  (p - 1) \| (y_1, y_2)\|_{1,p} < &  \lambda(\kappa_1 + \kappa_2 + 2)(\kappa_1 + \kappa_2 + 1)\int_{\Omega}\vert y_1\vert^{\kappa_1 + 1}\vert y_2\vert^{\kappa_2 + 1}dz\\
  & - (q - 1)\| (\nabla y_1, \nabla y_2)\|_{q,\eta} \\
  &  - \nu\int_{\Omega}\left[a_1\vert y_1 \vert^{1 - \nu} + a_2\vert y_2 \vert^{1 - \nu}\right]dz.
 \end{split}
\end{equation}
Since $\nu > 0$, $q > 1$, and $(H_1)$, $(H_2)$ hold, one has
\begin{equation}\label{uv < ualpha + 1vbeta + 1}
   (p - 1) \| (y_1, y_2)\|_{1,p} <  \lambda(\kappa_1 + \kappa_2 + 2)(\kappa_1 + \kappa_2 + 1)\int_{\Omega}\vert y_1\vert^{\kappa_1 + 1}\vert y_2\vert^{\kappa_2 + 1}dz.
\end{equation}
Combining with \eqref{uvualpha + 1vbeta + 1}, we can get for some $C_{11}>0$, \eqref{uv < ualpha + 1vbeta + 1} comes to
\begin{equation}\label{lambad1}
   \| (y_1, y_2)\|_{1,p} < \lambda C_{11}\| (y_1, y_2) \|_{1,p}^{\frac{\kappa_1 + \kappa_2 + 2}{p}}.
\end{equation}

On the other hand, suppose there exists a point $(y_1, y_2)\in\mathcal{N}_{\lambda}^{ - }$ and $J(y_1, y_2)\leq 0$, i.e.,
\begin{equation}\label{n - juv < 0}
  \begin{split}
  J(y_1, y_2) =  &\frac{1}{p}\int_{\Omega}(\vert \nabla {y_1}\vert^p + \vert \nabla y_2\vert^p)dz  + \frac{1}{q}\int_{\Omega}\eta\vert \nabla {y_1}\vert^{q}dz + \frac{1}{q}\int_{\Omega}\eta\vert \nabla y_2\vert^{q}dz\\
   &  -  \frac{1}{1 - \nu}\int_{\Omega}\left[a_1\vert y_1 \vert^{1 - \nu} + a_2\vert y_2 \vert^{1 - \nu}\right]dz  - \lambda\int_{\Omega}\vert y_1\vert^{\kappa_1 + 1} \vert y_2\vert^{\kappa_2 + 1}dz\leq 0.
 \end{split}
\end{equation}
Again since $ (y_1, y_2)\in\mathcal{N}_{\lambda}^{ - }\subset \mathcal{N}_{\lambda}$, from the definition of $\mathcal{N}_{\lambda}$, one has
\begin{equation}\label{uvn - lambda}
  \begin{split}
      - \lambda\int_{\Omega}\vert y_1\vert^{\kappa_1 + 1}\vert y_2\vert^{\kappa_2 + 1}dz = & - \frac{1}{\kappa_1 + \kappa_2 + 2}\| (y_1, y_2)\|_{1,p} - \frac{1}{\kappa_1 + \kappa_2 + 2}\|(\nabla {y_1}, \nabla y_2)\|_{q,\eta} \\
   & + \frac{1}{\kappa_1 + \kappa_2 + 2}\int_{\Omega}\left[a_1\vert y_1 \vert^{1 - \nu} + a_2\vert y_2 \vert^{1 - \nu}\right]dz.
   \end{split}
\end{equation}
Using \eqref{uvn - lambda} in \eqref{n - juv < 0} to get
\begin{equation*}
 \begin{split}
&\left(\frac{1}{p} - \frac{1}{\kappa_1 + \kappa_2 + 2}\right)\| (y_1, y_2)\|_{1,p} + \left( \frac{1}{q} - \frac{1}{\kappa_1 + \kappa_2 + 2}\right)\|(\nabla {y_1}, \nabla y_2)\|_{q,\eta} \\
    & + \left(\frac{1}{\kappa_1 + \kappa_2 + 2}  -  \frac{1}{1 - \nu}\right)\int_{\Omega}\left[a_1\vert y_1 \vert^{1 - \nu} + a_2\vert y_2 \vert^{1 - \nu}\right]dz\leq0.
 \end{split}
\end{equation*}
Taking into account that $q < \kappa_1 + \kappa_2 + 2$ and $(H_1)$ holds, we have
\begin{equation*}
  \begin{split}
   &\| (y_1, y_2)\|_{1,p}\leq\frac{p(\kappa_1 + \kappa_2 + \nu + 1)}{(\kappa_1 + \kappa_2 + 2 - p)(1 - \nu)}\int_{\Omega}\left[a_1\vert y_1 \vert^{1 - \nu} + a_2\vert y_2 \vert^{1 - \nu}\right]dz.
  \end{split}
\end{equation*}
Thus, using $(H_2)$, and proceeding as in the proof of \eqref{gamma<1,h,0}, for some positive cobstants $C_{12}, C_{13}$ and $C_{14}$, we have
\begin{equation}\label{juu < 0}
  \begin{split}
\| (y_1, y_2)\|_{1,p}&\leq C_{12}(\|y_1\|_p^{1 - \nu} + \|y_2\|_p^{1 - \nu})\\
&\leq C_{13}(\|y_1\|_{1, p}^{1 - \nu} + \|y_2\|_{1, p}^{1 - \nu})\\
&\leq C_{14}\|(y_1, y_2)\|_{1,p}^{\frac{1 - \nu}{p}},
  \end{split}
\end{equation}
here we have used the Poincar\'{e}'s inequality and that the function $t \rightarrow t^{\frac{1-\nu}{p}}$ is concave.
Combining \eqref{lambad1} and \eqref{juu < 0}, we have
\begin{equation*}
\left(\frac{1}{\lambda C_{11}}\right)^{\frac{1}{\kappa_1 + \kappa_2 + 2 - p}} < C_{14}^{\frac{1}{p + \nu - 1}}.
\end{equation*}
Thus, since $1<p<\kappa_1+\kappa_2+2$, we get
\begin{equation*}
  \lambda>C_{11}^{-1}C_{14}^{-\frac{\kappa_1+\kappa_2+2-p}{p-1+\gamma}}.
\end{equation*}
Then the lemma holds with $\lambda_1:=C_{11}^{-1}C_{14}^{-\frac{\kappa_1+\kappa_2+2-p}{p-1+\gamma}}$.
%let $\lambda\rightarrow 0^{ + }$, one has
%\begin{equation*}
%\frac{1}{ C_{12}} < \lambda C_{15}^{\frac{\kappa_1 + \kappa_2 + 2 - p}{p + \nu - 1}} = 0.
%\end{equation*}
%This is a contradiction. Thus we can find a $\lambda_1 > 0$ such that for all $\lambda\in(0, \lambda_1$) and $(y_1, y_2)\in\mathcal{N}_{\lambda}^{ - }$, there holds $J(y_1, y_2) > 0$.
\end{proof}

\begin{lemma}\label{t1t2}
 Suppose the assumptions $(H_1)$, $(H_2)$, $(H_{3})$ hold and $(y_1, y_2)\in W_{0}^{1, \mathcal{H}}(\Omega) \times W_{0}^{1, \mathcal{H}}(\Omega)$ with $\int_{\Omega}\vert y_1\vert^{\kappa_1 + 1}\vert y_2\vert^{\kappa_2 + 1}dz > 0$, then there exists $ \tilde{\lambda}_{0} > 0$ such that for $\lambda\in (0, \tilde{\lambda}_{0}), $ there exist two positive constants $t_1, t_2$ such that $t_1<t_2$, $t_1(y_1, y_2)\in \mathcal{N}_{\lambda}^{ + }$, $t_2(y_1, y_2)\in \mathcal{N}_{\lambda}^{ - }$.
\end{lemma}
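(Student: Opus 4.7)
The plan is to reduce the pair of conditions $\psi'_{(y_1,y_2)}(t)=0$ and $\psi''_{(y_1,y_2)}(t)\gtrless 0$ to the study of a single auxiliary function of $t$. Set
$A_1:=\|(y_1,y_2)\|_{1,p}$,
$A_2:=\|(\nabla y_1,\nabla y_2)\|_{q,\eta}$,
$B:=\int_\Omega\bigl[a_1|y_1|^{1-\nu}+a_2|y_2|^{1-\nu}\bigr]dz$,
and
$C:=\int_\Omega|y_1|^{\kappa_1+1}|y_2|^{\kappa_2+1}dz$.
By hypothesis $C>0$, so both $y_1$ and $y_2$ are nontrivial, and then $(H_2)$ gives $B>0$ (and clearly $A_1>0$). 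Multiplying the expression for $\psi'_{(y_1,y_2)}(t)$ by $t^\nu$, one sees that, for $t>0$, $\psi'_{(y_1,y_2)}(t)=0$ is equivalent to $\widetilde\phi(t)=B$, where
\[
\widetilde\phi(t):=A_1\,t^{p-1+\nu}+A_2\,t^{q-1+\nu}-\lambda(\kappa_1+\kappa_2+2)\,C\,t^{\kappa_1+\kappa_2+1+\nu}.
\]
Differentiating the identity $t^\nu\psi'(t)=\widetilde\phi(t)-B$ at a zero $t_\ast$ of $\psi'$ yields $\psi''(t_\ast)=t_\ast^{-\nu}\widetilde\phi'(t_\ast)$, so the sign of $\psi''$ at each zero coincides with the sign of $\widetilde\phi'$ there.

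Next I would establish the ``one-bump'' shape of $\widetilde\phi$: strictly increasing on $(0,t_0)$ and strictly decreasing on $(t_0,\infty)$ for a unique $t_0>0$. Observe that $\widetilde\phi(0)=0$ and $\widetilde\phi(t)\to-\infty$ as $t\to\infty$, because $\kappa_1+\kappa_2+1+\nu$ is the strictly largest exponent (using $p<q<\kappa_1+\kappa_2+2$). Dividing $\widetilde\phi'(t)$ by $t^{p-2+\nu}$ gives
\[
h(t)=(p-1+\nu)A_1+(q-1+\nu)A_2\,t^{q-p}-\lambda(\kappa_1+\kappa_2+2)(\kappa_1+\kappa_2+1+\nu)\,C\,t^{\kappa_1+\kappa_2+2-p},
\]
and since $0<q-p<\kappa_1+\kappa_2+2-p$, the derivative $h'$ has a unique positive zero; combined with $h(0^+)>0$ and $h(\infty)=-\infty$, this forces $h$ (and hence $\widetilde\phi'$) to change sign exactly once, from positive to negative, at a unique $t_0>0$.

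Now I would fix $T=T(y_1,y_2)>0$ large enough that $A_1T^{p-1+\nu}+A_2T^{q-1+\nu}\geq 2B$, which is possible because $q-1+\nu>0$. Setting
\[
\widetilde\lambda_0:=\frac{B}{(\kappa_1+\kappa_2+2)\,C\,T^{\kappa_1+\kappa_2+1+\nu}}>0,
\]
every $\lambda\in(0,\widetilde\lambda_0)$ yields $\lambda(\kappa_1+\kappa_2+2)\,C\,T^{\kappa_1+\kappa_2+1+\nu}<B$, and hence $\widetilde\phi(T)>B$, so $\max_{t>0}\widetilde\phi>B$. The intermediate value theorem applied on $(0,t_0)$ and $(t_0,\infty)$, combined with $\widetilde\phi(0)=0<B$ and $\widetilde\phi(\infty)=-\infty<B$, then produces exactly two positive solutions $t_1<t_0<t_2$ of $\widetilde\phi(t)=B$, with $\widetilde\phi'(t_1)>0$ and $\widetilde\phi'(t_2)<0$. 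By the sign relation from the first paragraph, $\psi''(t_1)>0$ and $\psi''(t_2)<0$, and Lemma \ref{lemma2.3} delivers $t_1(y_1,y_2)\in\mathcal{N}_\lambda^{+}$ and $t_2(y_1,y_2)\in\mathcal{N}_\lambda^{-}$.

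The delicate step in this plan is the one-bump shape of $\widetilde\phi$: uniqueness of the critical point $t_0$ is what guarantees exactly two positive roots of $\widetilde\phi=B$ (rather than four or more), and it hinges on the strict ordering $p<q<\kappa_1+\kappa_2+2$ via a further differentiation of $h$. Once the shape is established, the smallness of $\lambda$ enters only through the elementary comparison $\lambda\,T^{\kappa_1+\kappa_2+1+\nu}<$ constant, and the intermediate value theorem finishes the argument.
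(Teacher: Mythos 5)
Your reduction of $\psi'_{(y_1,y_2)}(t)=0$ to the level equation $\widetilde\phi(t)=B$ for a ``one-bump'' auxiliary function, the sign relation $\psi''(t_*)=t_*^{-\nu}\widetilde\phi'(t_*)$ at zeros of $\psi'$, and the monotonicity analysis of $h$ are all sound and parallel the paper's own strategy (the paper normalizes by dividing $\psi'_{(y_1,y_2)}$ by $t^{\kappa_1+\kappa_2+1}$, obtaining the function $F_{(y_1,y_2)}$ in \eqref{psiandphitu}; the two normalizations differ only by the positive factor $t^{\nu+\kappa_1+\kappa_2+1}$). The genuine gap is in your construction of the threshold: your
\begin{equation*}
\widetilde\lambda_0=\frac{B}{(\kappa_1+\kappa_2+2)\,C\,T^{\kappa_1+\kappa_2+1+\nu}}
\end{equation*}
depends on $(y_1,y_2)$ through $B$, $C$ and $T$, and you make no attempt to bound it from below uniformly. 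The paper goes out of its way to produce $\tilde{\lambda}_0=C_{16}/C_{17}$ \emph{independent of} $(y_1,y_2)$, and this is not cosmetic: $\tilde{\lambda}_0$ enters the fixed constant $\lambda^*=\min\{\lambda_0,\lambda_1,\tilde{\lambda}_0\}$, and the lemma is later applied, for a $\lambda$ chosen in advance, to pairs that are only produced downstream (the weak limit of a minimizing sequence in Lemma \ref{N--}, an arbitrary nonzero pair in Lemma \ref{t3}). With a pair-dependent threshold those applications collapse.

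The missing ingredient is exactly the chain of estimates the paper performs: H\"older's and Poincar\'e's inequalities together with the concavity of $t\mapsto t^{(1-\nu)/p}$ and $t\mapsto t^{1/p}$ give $B\leq C_{15}\|(y_1,y_2)\|_{1,p}^{(1-\nu)/p}$ and $(\kappa_1+\kappa_2+2)C\leq \frac{C_7}{\kappa_1+\kappa_2+1}\|(y_1,y_2)\|_{1,p}^{(\kappa_1+\kappa_2+2)/p}$ (see \eqref{holderaxu} and \eqref{holderlambaduv}); inserting the first bound into the explicit maximum \eqref{phiu} of the $\lambda$-free part of the fibering derivative shows that this maximum is at least $C_{16}\|(y_1,y_2)\|_{1,p}^{(\kappa_1+\kappa_2+2)/p}$ --- the exponents combine to give the \emph{same} power $\frac{\kappa_1+\kappa_2+2}{p}$ as in the bound for $\lambda(\kappa_1+\kappa_2+2)C$ --- so the requirement that the maximum exceed the level reduces to the universal condition $\lambda<C_{16}/C_{17}$. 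Your argument can be repaired along the same lines (take $T$ with $A_1T^{p-1+\nu}=2B$ and then apply the two bounds above), but as written it proves only a pointwise-in-$(y_1,y_2)$ version of the lemma, which is not what the rest of the paper uses. A minor side remark: when $A_2=0$ your claim that $h'$ has a unique positive zero fails ($h$ is then strictly decreasing), though the conclusion that $h$ changes sign exactly once survives.
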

\begin{proof}
For a given $(y_1, y_2)\in W_{0}^{1, \mathcal{H}}(\Omega) \times W_{0}^{1, \mathcal{H}}(\Omega)$ with $\int_{\Omega}\vert y_1\vert^{\kappa_1 + 1}\vert y_2\vert^{\kappa_2 + 1}dz > 0$, define the function $\tilde{F}_{(y_1,y_2)}(t):(0,  + \infty)\rightarrow \mathbb{R}$ by
\begin{equation*}
  \tilde{F}_{(y_1, y_2)}(t): = t^{p - 1 - (\kappa_1 + \kappa_2 + 1)}\| (y_1, y_2)\|_{1,p} - t^{ - \nu - (\kappa_1 + \kappa_2 + 1)}\int_{\Omega}\left[a_1\vert y_1 \vert^{1 - \nu} + a_2\vert y_2 \vert^{1 - \nu}\right]dz.
\end{equation*}
\par Now we discuss the geometry of $\tilde{F}_{(y_1,y_2)}(t)$.
If $\tilde{F}_{(y_1,y_2)}^{\prime}(t) = 0$, then
\begin{equation*}
\begin{split}
(p - \kappa_1 - \kappa_2 - 2)&t^{p - \kappa_1 - \kappa_2 - 3}\| (y_1, y_2)\|_{1,p}\\
&= ( - \nu - \kappa_1 - \kappa_2 - 1)t^{ - \nu - \kappa_1 - \kappa_2 - 2}\int_{\Omega}\left[a_1\vert y_1 \vert^{1 - \nu} + a_2\vert y_2 \vert^{1 - \nu}\right]dz.
  \end{split}
\end{equation*}
Then the unique stationary point, denoted by $\tilde{t}_{0}$, of the function $\tilde{F}_{(y_1,y_2)}(t)$ can be solved as
\begin{equation*}
  \tilde{t}_{0} = \left[ \frac{(\nu + \kappa_1 + \kappa_2 + 1)\int_{\Omega}\left[a_1\vert y_1 \vert^{1 - \nu} + a_2\vert y_2 \vert^{1 - \nu}\right]dz}{(\kappa_1 + \kappa_2 + 2 - p)\| (y_1, y_2)\|_{1,p}}\right]^{\frac{1}{p + \nu - 1}},
\end{equation*}
and so
\begin{equation}\label{phiu}
\begin{split}
&\tilde{F}_{(y_1,y_2)}(\tilde{t}_{0}) \\
&=\frac{p + \nu - 1}{\kappa_1 + \kappa_2 + 2 - p}\left[ \frac{\kappa_1 + \kappa_2 + 2 - p}{\nu + \kappa_1 + \kappa_2 + 1} \right]^{\frac{\nu + \kappa_1 + \kappa_2 + 1}{p + \nu - 1}}\frac{(\| (y_1, y_2)\|_{1,p})^{\frac{\nu + \kappa_1 + \kappa_2 + 1}{p + \nu - 1}}}{\left\{\int_{\Omega}\left[a_1\vert y_1 \vert^{1 - \nu} + a_2\vert y_2 \vert^{1 - \nu}\right]dz\right\}^{\frac{\kappa_1 + \kappa_2 + 2 - p}{p + \nu - 1}}}.
\end{split}
\end{equation}
Exactly as in the proof of the inequality \eqref{gamma<1,h,0} (and also of the inequality \eqref{uvualpha + 1vbeta + 1}), by using $(H_2)$, the Poincar\'{e}'s inequality and H\"{o}lder inequalities, and the fact that $t\rightarrow t^{(1-\nu)/p}$ is a concave function, we get
\begin{equation}\label{holderaxu}
  \int_{\Omega}\left[a_1\vert y_1 \vert^{1 - \nu} + a_2\vert y_2 \vert^{1 - \nu}\right]dz\leq C_{15}\|(y_1, y_2)\|_{1,p}^{\frac{1 - \nu}{p}},
\end{equation}
\begin{equation}\label{holderlambaduv}
  \lambda(\kappa_1 + \kappa_2 + 2)\int_{\Omega}\vert y_1\vert^{\kappa_1 + 1}\vert y_2\vert^{\kappa_2 + 1}dz\leq \frac{\lambda C_7}{\kappa_1 + \kappa_2 + 1} \| (y_1, y_2) \|_{1,p}^{\frac{\kappa_1 + \kappa_2 + 2}{p}}.
\end{equation}
Combining \eqref{phiu} with \eqref{holderaxu} and \eqref{holderlambaduv}, we have for some
positive constants $C_{16}$ and $C_{17}$,
\begin{equation}
\begin{split}
\tilde{F}_{(y_1, y_2)}(\tilde{t}_{0}) &- \lambda(\kappa_1 + \kappa_2 + 2)\int_{\Omega}\vert y_1\vert^{\kappa_1 + 1}\vert y_2\vert^{\kappa_2 + 1}dz\\
\geq&\frac{p + \nu - 1}{\kappa_1 + \kappa_2 + 2 - p}\left[ \frac{\kappa_1 + \kappa_2 + 2 - p}{\nu + \kappa_1 + \kappa_2 + 1} \right]^{\frac{\nu + \kappa_1 + \kappa_2 + 1}{p + \nu - 1}}\frac{(\|(y_1, y_2)\|_{1,p})^{\frac{\nu + \kappa_1 + \kappa_2 + 1}{p + \nu - 1}}}{\left[C_{15}\|(y_1, y_2)\|_{1,p}^{\frac{1 - \nu}{p}}\right]^{\frac{\kappa_1 + \kappa_2 + 2 - p}{p + \nu - 1}}}\\
   &- \frac{\lambda C_7}{\kappa_1 + \kappa_2 + 1}\| (y_1, y_2) \|_{1,p}^{\frac{\kappa_1 + \kappa_2 + 2}{p}}\\
  =& (C_{16} - \lambda C_{17})\| (y_1, y_2) \|_{1,p}^{\frac{\kappa_1 + \kappa_2 + 2}{p}}.
\end{split}
\end{equation}
Set $\tilde{\lambda}_{0} = \frac{C_{16}}{C_{17}} > 0$ which independent of $(y_1,y_2)$ and $\lambda$, then for every $\lambda\in(0, \tilde{\lambda}_{0})$, one has
\begin{equation*}
  \tilde{F}_{(y_1, y_2)}(\tilde{t}_{0}) - \lambda(\kappa_1 + \kappa_2 + 2)\int_{\Omega}\vert y_1\vert^{\kappa_1 + 1}\vert y_2\vert^{\kappa_2 + 1}dx > 0.
\end{equation*}

\par Now we consider, for $\lambda\in(0,\tilde{\lambda}_0$), the function $ F_{(y_1, y_2)}:(0,\infty)\rightarrow \mathbb{R}$ defined by
\begin{equation*}
  F_{(y_1, y_2)}(t): = \tilde{F}_{(y_1, y_2)}(t) +  t^{q - 1 - (\kappa_1 + \kappa_2 + 1)}\|(\nabla {y_1},\nabla y_2) \|_{q,\eta}.
\end{equation*}
Taking into account $(H_1)$ holds and the fact that $\nu + \kappa_1 + \kappa_2 + 1 > \kappa_1 + \kappa_2 + 2 - p > \kappa_1 + \kappa_2 + 2 - q > 0$, we have
\begin{equation*}
\begin{split}
    F_{(y_1, y_2)}(t)\geq \tilde{F}_{(y_1, y_2)}(t), \ \ \forall\  t\in(0,  + \infty),
\end{split}
\end{equation*}
and
\begin{equation*}
  \lim_{t\rightarrow0^{ + }} F_{(y_1, y_2)}(t) = \lim_{t\rightarrow 0^{ + }}\tilde{F}_{(y_1, y_2)}(t) =  - \infty,
\end{equation*}
\begin{equation*}
      \lim_{t\rightarrow  + \infty} F_{(y_1, y_2)}(t) = \lim_{t\rightarrow  + \infty}\tilde{F}_{(y_1, y_2)}(t) = 0.
\end{equation*}
It is clearly that, for any $t>0$,
\begin{equation}\label{psiandphitu}
 \psi_{(y_1, y_2)}^{\prime}(t) = t^{\kappa_1 + \kappa_2 + 1}\left[F_{(y_1, y_2)}(t) - \lambda(\kappa_1 + \kappa_2 + 2)\int_{\Omega}\vert y_1\vert^{\kappa_1 + 1}\vert y_2\vert^{\kappa_2 + 1}dz\right],
\end{equation}
\begin{equation}\label{psiandphitu2}
\begin{split}
  \psi_{(y_1,y_2)}^{\prime\prime}(t) =& (\kappa_1 + \kappa_2 + 1)t^{\kappa_1 + \kappa_2}\left[F_{(y_1, y_2)}(t) - \lambda(\kappa_1 + \kappa_2 + 2)\int_{\Omega}\vert y_1\vert^{\kappa_1 + 1}\vert y_2\vert^{\kappa_2 + 1}dz\right]\\
   & + t^{\kappa_1 + \kappa_2 + 1}F_{(y_1, y_2)}^{\prime}(t).
\end{split}
\end{equation}
Thus, combining \eqref{psiandphitu} and \eqref{psiandphitu2} with Lemma \ref{lemma2.3}, it follows that if $t > 0$ satisfies $F_{(y_1, y_2)}(t) = \lambda(\kappa_1 + \kappa_2 + 2)\int_{\Omega}\vert y_1\vert^{\kappa_1 + 1}\vert y_2\vert^{\kappa_2 + 1}dz$ and $F_{(y_1, y_2)}^{\prime}(t) > 0$ (respctively $F_{(y_1, y_2)}^{\prime}(t) < 0$),  then $t(y_1, y_2)\in\mathcal{N}_{\lambda}^{ + }$(resp. $t(y_1, y_2)\in\mathcal{N}_{\lambda}^{ - })$.

Since $\lim_{t\rightarrow0^{ + }} F_{(y_1, y_2)}(t) =  - \infty$, $\lim_{t\rightarrow  + \infty} F_{(y_1, y_2)}(t) =0$ and $F_{(y_1, y_2)}(\tilde{t}_{0})\geq \tilde{F}_{(y_1, y_2)}(\tilde{t}_{0})>0$, it follows that there exists a point $t_0\in(0,+\infty)$ where $F_{(y_1, y_2)}(t) $ attains its maximum value. Such a $t_0$ is clearly a critical point of $F_{(y_1, y_2)}$(that is, $F_{(y_1, y_2)}^{\prime}(t_0)=0$ ). Now we prove that $t_0$ is the unique critical point of $F_{(y_1, y_2)}(t)$. Define $ F_1:(0,\infty)\rightarrow \mathbb{R}$ by
\begin{equation}\label{Phi1definition}
\begin{split}
 F_1(t): & = t^{\nu + \kappa_1 + \kappa_2 + 2} F_{(y_1, y_2)}^{\prime}(t) + ( - \nu - \kappa_1 - \kappa_2 - 1)\int_{\Omega}\left[a_1\vert y_1 \vert^{1 - \nu} + a_2\vert y_2 \vert^{1 - \nu}\right]dz.
\end{split}
\end{equation}
From the definition of $F_{(y_1, y_2)}^{\prime}(t)$, \eqref{Phi1definition} becomes
\begin{equation}\label{Phit}
\begin{split}
  F_1(t)  = &(p - \kappa_1 - \kappa_2 - 2)t^{p + \nu - 1}\| (y_1, y_2)\|_{1,p} + (q - \kappa_1 - \kappa_2 - 2)t^{q + \nu - 1}\| (\nabla y_1, \nabla y_2)\|_{q,\eta}
\end{split}
\end{equation}
Obviously, from the definition of $F_1(t)$, one has for any $t>0$,
\begin{equation*}
F_{(y_1, y_2)}^{\prime}(t) = 0\Leftrightarrow
F_1(t) -  ( - \nu - \kappa_1 - \kappa_2 - 1)\int_{\Omega}\left[a_1\vert y_1 \vert^{1 - \nu} + a_2\vert y_2 \vert^{1 - \nu}\right]dz = 0.
\end{equation*}
From \eqref{Phit} and $\nu < p < q < \kappa_1 + \kappa_2 + 2$, one has for any $t>0$,
\begin{equation*}
  F_1^{\prime}(t) < 0.
\end{equation*}
Then $ F_1$ is injective, and so, by the above equivalence, $t_0$ is the unique critical point of $F_{(y_1, y_2)}$. From this fact, and since $\lim_{t\rightarrow0^{ + }} F_{(y_1, y_2)}(t) =  - \infty, \lim_{t\rightarrow  + \infty} F_{(y_1, y_2)}(t) =0$, and
%Combining with the above analysis of the geometry of $F_{(y_1, y_2)}(t)$, we can know that there exists $t_{0} > 0$ such that $t_{0}$ are the unique extreme point of the function $F_{(y_1, y_2)}(t)$ (because of the injectivity of $F_1$). Which means
\begin{equation}\label{phivtphivsextremepoint}
  F_{(y_1, y_2)}(t_{0}) = \max \limits_{ t\in(0,\infty)} F_{(y_1, y_2)}(t),
\end{equation}
it follows that $ F_{(y_1, y_2)}^{\prime}(t)>0$ for any $t\in(0,t_0)$ and that $F_{(y_1, y_2)}^{\prime}(t)<0$ for any $t\in(t_0,\infty)$. Now,
\begin{equation}\label{phiut0 - lambdauv}
  F_{(y_1, y_2)}(t_{0}) \geq  F_{(y_1, y_2)}(\tilde{t}_{0}) \geq \tilde{F}_{(y_1, y_2)}(\tilde{t}_{0}) > \lambda(\kappa_1 + \kappa_2 + 2)\int_{\Omega}\vert y_1\vert^{\kappa_1 + 1}\vert y_2\vert^{\kappa_2 + 1}dz > 0,
\end{equation}
thus we can find unique $t_1 < t_{0}$ such that
\begin{equation*}
  F_{(y_1, y_2)}(t_1) = \lambda(\kappa_1 + \kappa_2 + 2)\int_{\Omega}\vert y_1\vert^{\kappa_1 + 1}\vert y_2\vert^{\kappa_2 + 1}dz, ~~~~F_{(y_1, y_2)}^{\prime}(t_1) > 0,
\end{equation*}
so, $t_1(y_1, y_2)\in \mathcal{N}_{\lambda}^{ + }$.

Similarly, there exists unique $t_2  >  t_{0}$ such that
\begin{equation*}
  F_{(y_1, y_2)}(t_2) = \lambda(\kappa_1 + \kappa_2 + 2)\int_{\Omega}\vert y_1\vert^{\kappa_1 + 1}\vert y_2\vert^{\kappa_2 + 1}dz, ~~~~ F_{(y_1, y_2)}^{\prime}(t_2) < 0,
\end{equation*}
thus $t_2(y_1, y_2)\in \mathcal{N}_{\lambda}^{ - }$.

Moreover, we can shown that $\psi_{(y_1,y_2)}(t)$ attains its maximum value at $t=t_2$ and its minimum value at $t=t_1$. In fact, since $F_{(y_1, y_2)}$ is strictly increasing on $(0,t_0)$ and strictly decreasing on $(t_0,\infty)$, from \eqref{psiandphitu} and $t_1<t_0<t_2$, one has the sign of $\psi_{(y_1,y_2)}^{\prime}(t)$ is constant on each one of the intervals $(0,t_1)$, $(t_1,t_2)$ and $(t_2,\infty)$. Again since $\psi_{(y_1,y_2)}^{\prime}(t)>0$ for $t\in(t_1,t_2)$  it follows that $\psi_{(y_1,y_2)}^{\prime}(t)<0$ for $t\in(0,t_1)$, and similar arguments give that $\psi_{(y_1,y_2)}^{\prime}(t)>0$ for all $t\in(t_1,t_2)$ and  $\psi_{(y_1,y_2)}^{\prime}(t)<0$ for all $t\in(t_2,\infty)$.
\end{proof}
\begin{lemma}\label{t3}
Suppose the assumptions $(H_1)$, $(H_2)$, $(H_{3})$ hold and $\lambda\in (0, \tilde{\lambda}_{0})$. If $(y_1, y_2)\in W_{0}^{1, \mathcal{H}}(\Omega) \times W_{0}^{1, \mathcal{H}}(\Omega)\backslash\{(0,0)\}$, then there exists a unique constant $t_{3} > 0$ such that $t_{3}(y_1, y_2)\in \mathcal{N}_{\lambda}^{ + }$.
\end{lemma}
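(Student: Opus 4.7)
The plan is to split the argument into two cases depending on whether the coupling integral $\int_{\Omega}|y_1|^{\kappa_1+1}|y_2|^{\kappa_2+1}\,dz$ is positive or zero, since Lemma \ref{t1t2} already delivers the conclusion in the positive case and the degenerate case has to be handled by hand.

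In the first case, where $\int_{\Omega}|y_1|^{\kappa_1+1}|y_2|^{\kappa_2+1}\,dz>0$, I would simply apply Lemma \ref{t1t2} and take $t_3:=t_1$, which gives $t_3(y_1,y_2)\in\mathcal{N}_\lambda^+$. Uniqueness is inherited from the last paragraph of the proof of Lemma \ref{t1t2}: the analysis there shows that $\psi'_{(y_1,y_2)}$ has exactly two positive zeros $t_1<t_2$ on $(0,\infty)$, with $t_1(y_1,y_2)\in\mathcal{N}_\lambda^+$ and $t_2(y_1,y_2)\in\mathcal{N}_\lambda^-$, so no other positive multiple of $(y_1,y_2)$ can lie in $\mathcal{N}_\lambda^+$.

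In the second case, where $\int_{\Omega}|y_1|^{\kappa_1+1}|y_2|^{\kappa_2+1}\,dz=0$, the last term in both $\psi'_{(y_1,y_2)}$ and $\psi''_{(y_1,y_2)}$ drops out, so
\[
\psi'_{(y_1,y_2)}(t)=t^{p-1}\|(y_1,y_2)\|_{1,p}+t^{q-1}\|(\nabla y_1,\nabla y_2)\|_{q,\eta}-t^{-\nu}\int_{\Omega}\!\bigl[a_1|y_1|^{1-\nu}+a_2|y_2|^{1-\nu}\bigr]dz.
\]
Since $(y_1,y_2)\neq(0,0)$, the continuous embedding $W_0^{1,\mathcal{H}}(\Omega)\hookrightarrow W_0^{1,p}(\Omega)$ from Lemma \ref{prop2.2(Aro)}(i) gives $\|(y_1,y_2)\|_{1,p}>0$, and $(H_2)$ yields $\int_{\Omega}[a_1|y_1|^{1-\nu}+a_2|y_2|^{1-\nu}]\,dz>0$. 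After multiplying by $t^{\nu}$, $\psi'_{(y_1,y_2)}(t)t^\nu$ becomes a strictly increasing continuous function of $t$ (both $p-1+\nu>0$ and $q-1+\nu>0$) that tends to a strictly negative constant as $t\to 0^+$ and to $+\infty$ as $t\to\infty$; consequently it has a unique positive zero $t_3$. Evaluating \eqref{2partialpsit} at $t_3$ with the last term absent shows
\[
\psi''_{(y_1,y_2)}(t_3)=(p-1)t_3^{p-2}\|(y_1,y_2)\|_{1,p}+(q-1)t_3^{q-2}\|(\nabla y_1,\nabla y_2)\|_{q,\eta}+\nu t_3^{-\nu-1}\!\int_{\Omega}\!\bigl[a_1|y_1|^{1-\nu}+a_2|y_2|^{1-\nu}\bigr]dz>0,
\]
so $t_3(y_1,y_2)\in\mathcal{N}_\lambda^+$, and uniqueness within $\mathcal{N}_\lambda^+$ is automatic since $\psi'_{(y_1,y_2)}$ has only the one zero.

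The main obstacle is recognizing that the proof of Lemma \ref{t1t2} crucially used the positivity of the coupling integral (to invert $F_{(y_1,y_2)}(t)=\lambda(\kappa_1+\kappa_2+2)\int|y_1|^{\kappa_1+1}|y_2|^{\kappa_2+1}dz$ twice) and therefore does not directly apply when the integral vanishes; once that borderline case is isolated, the reduced fibering map is monotone in $t^\nu\psi'(t)$, so no new smallness assumption on $\lambda$ is required and the range $\lambda\in(0,\tilde{\lambda}_0)$ inherited from Lemma \ref{t1t2} already suffices.
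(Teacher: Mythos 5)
Your proposal is correct and follows essentially the same route as the paper: the positive-integral case is delegated to Lemma \ref{t1t2} (with uniqueness read off from the two-zero structure of $\psi'_{(y_1,y_2)}$ established there), and in the degenerate case the reduced fibering derivative is shown to be monotone with a single sign change, forcing a unique zero at which $\psi''_{(y_1,y_2)}>0$. The only cosmetic difference is that you obtain monotonicity by multiplying $\psi'_{(y_1,y_2)}$ by $t^{\nu}$, whereas the paper works with the auxiliary function $\tilde F_{(y_1,y_2)}(t)=t^{p-1}\|(y_1,y_2)\|_{1,p}-t^{-\nu}\int_{\Omega}[a_1|y_1|^{1-\nu}+a_2|y_2|^{1-\nu}]dz$; the two arguments are equivalent.
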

\begin{proof}
If $\int_{\Omega}\vert y_1\vert^{\kappa_1 + 1}\vert y_2\vert^{\kappa_2 + 1}dz > 0$, then Lemma \ref{t1t2} gives the required $t_{3}$, and an inspection of the proof of Lemma \ref{t1t2} gives also that such a $t_3$ is unique.

If $\int_{\Omega}\vert y_1\vert^{\kappa_1 + 1}\vert y_2\vert^{\kappa_2 + 1}dz = 0$.
 % Since $(u, v)\neq(0, 0)$, we have
%\begin{equation*}
%  \int_{\Omega}\left(a_1\vert y_1 \vert^{1 - \nu} + a_2\vert y_2 \vert^{1 - \nu}\right)dx > 0,
%\end{equation*}
Let $(y_1,y_2)\in W_{0}^{1, \mathcal{H}}(\Omega) \times W_{0}^{1, \mathcal{H}}(\Omega)\backslash\{(0,0)\}$, define function $\tilde{F}_{(y_1, y_2)}(t):(0,\infty)\rightarrow \mathbb{R}$ by
\begin{equation*}
  \tilde{F}_{(y_1, y_2)}(t): = t^{p - 1}\| (y_1, y_2)\|_{1,p} - t^{ - \nu}\int_{\Omega}\left[a_1\vert y_1 \vert^{1 - \nu} + a_2\vert y_2 \vert^{1 - \nu}\right]dz.
\end{equation*}
Since $p - 1 > 0 >  - \nu$, then we have $\tilde{F}_{(y_1, y_2)}^{\prime}(t) > 0$ for any $t>0$,
\begin{equation*}
 \lim_{t\rightarrow 0^{ + }}\tilde{F}_{(y_1, y_2)}(t) =  - \infty, ~~~~ \lim_{t\rightarrow  + \infty}\tilde{F}_{(y_1, y_2)}(t) =  + \infty.
\end{equation*}
%So, there exists unique $\hat{t}_{3} > 0$ such that $ \tilde{F}_{(y_1, y_2)}(\hat{t}_{3}) = 0$.
Since
\begin{equation*}
\psi_{(y_1, y_2)}^{\prime}(t)  = \tilde{F}_{(y_1, y_2)}(t) +  t^{q - 1}\|(\nabla {y_1},\nabla y_2) \|_{q,\eta},
\end{equation*}
and $q > 1$, it follows that $\lim_{t\rightarrow 0^{ + }}\psi_{(y_1, y_2)}^{\prime}(t)=- \infty$, $\lim_{t\rightarrow  + \infty}\psi_{(y_1, y_2)}^{\prime}(t) =  + \infty$, and that $\psi_{(y_1, y_2)}^{\prime\prime}(t)>0$ for any $t>0$. Then there exists a unique $t_3>0$ such that $\psi_{(y_1, y_2)}^{\prime}(t_{3}) = 0$ and $\psi_{(y_1, y_2)}^{\prime\prime}(t_{3}) > 0$, and so, by Lemma \ref{lemma2.3}, there exists a unique $t_3>0$ such that $t_{3}(y_1, y_2)\in\mathcal{N}_{\lambda}^{ + }$.
\end{proof}

\vskip0.5cm
Let $\lambda^* = \min\{\lambda_{0}, \lambda_1, \tilde{\lambda}_{0}\}$.

\begin{lemma}\label{N--}
Suppose the assumptions $(H_1)$, $(H_2)$, $(H_{3})$ hold, then for $\lambda\in (0, \lambda^*), $ there exists $({y_1}_*, {y_2}_*)\in\mathcal{N}_{\lambda}^{ - }$ such that
\begin{equation*}
  J({y_1}_*, {y_2}_*) = \inf_{(y_1,y_2)\in\mathcal{N}_{\lambda}^{ - }}J(y_1, y_2).
\end{equation*}
\end{lemma}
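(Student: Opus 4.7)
The plan is the standard Nehari-manifold direct method: extract a weak limit of a minimizing sequence via coercivity and reflexivity, show the limit is admissible for Lemma~\ref{t1t2}, and then use the fibering geometry to upgrade weak to strong convergence.

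First, Lemma~\ref{lem3.4} gives $J>0$ on $\mathcal{N}_\lambda^-$, so $\inf_{\mathcal{N}_\lambda^-}J\in[0,+\infty)$, and Lemma~\ref{t1t2} ensures $\mathcal{N}_\lambda^-\neq\emptyset$. Choose a minimizing sequence $\{(y_{1,n},y_{2,n})\}\subset\mathcal{N}_\lambda^-$. Lemma~\ref{bddblow} (coercivity of $J|_{\mathcal{N}_\lambda}$) bounds this sequence in the reflexive space $W_{0}^{1,\mathcal{H}}(\Omega)\times W_{0}^{1,\mathcal{H}}(\Omega)$, so, along a subsequence, $(y_{1,n},y_{2,n})\rightharpoonup({y_1}_*,{y_2}_*)$. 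By the compact embedding in Lemma~\ref{prop2.2(Aro)}(iii), $y_{i,n}\to{y_i}_*$ strongly in $L^{r}(\Omega)$ for every $r\in[1,p^*)$; a H\"older argument using the exponents $m_1,m_2<p^*$ from the proof of Lemma~\ref{3.2} yields $\int_{\Omega}|y_{1,n}|^{\kappa_1+1}|y_{2,n}|^{\kappa_2+1}\,dz\to\int_{\Omega}|{y_1}_*|^{\kappa_1+1}|{y_2}_*|^{\kappa_2+1}\,dz$, and the singular integrals converge analogously.

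The hard part will be showing $\int_{\Omega}|{y_1}_*|^{\kappa_1+1}|{y_2}_*|^{\kappa_2+1}\,dz>0$, which is what allows Lemma~\ref{t1t2} to apply to the weak limit. The chain of estimates in the proof of Lemma~\ref{lem3.4} in fact produces, for $\lambda<\lambda_1$, a uniform lower bound $\|(y_1,y_2)\|_{1,p}\geq c_*>0$ valid on all of $\mathcal{N}_\lambda^-$. Combining this with the defining strict inequality
\[
(p-1)\|(y_{1,n},y_{2,n})\|_{1,p}<\lambda(\kappa_1+\kappa_2+2)(\kappa_1+\kappa_2+1)\int_{\Omega}|y_{1,n}|^{\kappa_1+1}|y_{2,n}|^{\kappa_2+1}\,dz
\]
of $\mathcal{N}_\lambda^-$ and letting $n\to\infty$,
\[
\int_{\Omega}|{y_1}_*|^{\kappa_1+1}|{y_2}_*|^{\kappa_2+1}\,dz\;\geq\;\frac{(p-1)\,c_*}{\lambda(\kappa_1+\kappa_2+2)(\kappa_1+\kappa_2+1)}>0.
\]
Lemma~\ref{t1t2} then yields a unique $t_2^*>0$ with $t_2^*({y_1}_*,{y_2}_*)\in\mathcal{N}_\lambda^-$.

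Finally, I upgrade to strong convergence by contradiction. If the convergence is not strong, then by weak lower semicontinuity of the convex functionals $(y_1,y_2)\mapsto\|(y_1,y_2)\|_{1,p}$ and $(y_1,y_2)\mapsto\|(\nabla y_1,\nabla y_2)\|_{q,\eta}^{q}$ at least one inequality is strict, and combined with the strong $L^{r}$-convergence of the singular and coupling integrals this gives
\[
J(t_2^*{y_1}_*,t_2^*{y_2}_*)<\liminf_{n\to\infty}J(t_2^*y_{1,n},t_2^*y_{2,n}).
\]
Since $(y_{1,n},y_{2,n})\in\mathcal{N}_\lambda^-$, Lemma~\ref{t1t2} says $\psi_{(y_{1,n},y_{2,n})}$ attains its local maximum at $t=1$ on $(t_1^{\,n},\infty)$; after ruling out (by a short case analysis on the position of $t_2^*$ relative to $t_1^{\,n}$) that $t_2^*\leq t_1^{\,n}$ for infinitely many $n$, one has $J(t_2^*y_{1,n},t_2^*y_{2,n})\leq J(y_{1,n},y_{2,n})$ for $n$ large. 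Hence $J(t_2^*{y_1}_*,t_2^*{y_2}_*)<\inf_{\mathcal{N}_\lambda^-}J$, contradicting $t_2^*({y_1}_*,{y_2}_*)\in\mathcal{N}_\lambda^-$. Therefore the convergence is strong, $J({y_1}_*,{y_2}_*)=\inf_{\mathcal{N}_\lambda^-}J$, and passing to the limit in $\psi_{(y_{1,n},y_{2,n})}^{\prime}(1)=0$ and $\psi_{(y_{1,n},y_{2,n})}^{\prime\prime}(1)\leq 0$ combined with Lemma~\ref{3.2} ($\mathcal{N}_\lambda^{0}=\emptyset$ since $\lambda<\lambda_0$) forces $({y_1}_*,{y_2}_*)\in\mathcal{N}_\lambda^-$.
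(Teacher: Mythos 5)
Your argument is correct and follows the same overall architecture as the paper's proof of Lemma \ref{N--}: a minimizing sequence, boundedness via the coercivity of Lemma \ref{bddblow} and reflexivity, the compact embedding of Lemma \ref{prop2.2(Aro)}(iii) to pass to the limit in the coupling and singular integrals, Lemma \ref{t1t2} applied to the weak limit to produce $t_2^{*}({y_1}_*,{y_2}_*)\in\mathcal{N}_{\lambda}^{-}$, the comparison $J(t_2^{*}{y_1}_*,t_2^{*}{y_2}_*)<\liminf_{n}J(t_2^{*}y_{1,n},t_2^{*}y_{2,n})\leq\liminf_{n}J(y_{1,n},y_{2,n})$ to force strong convergence by contradiction, and finally $\mathcal{N}_{\lambda}^{0}=\emptyset$ (Lemma \ref{3.2}) to place the limit in $\mathcal{N}_{\lambda}^{-}$. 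The one step where you genuinely diverge is the non-vanishing of the weak limit, which the paper singles out as a main difficulty: the paper works in the auxiliary norms $\|\cdot\|_{m_1},\|\cdot\|_{m_2}$, derives \eqref{chengji}--\eqref{neq11} to conclude that at least one component survives in the limit, and then rules out the mixed cases (${y_1}_*=0$, ${y_2}_*\neq0$ and its symmetric counterpart) by driving the constant $C_{18}$ to be nonpositive. You instead note that \eqref{lambad1}, valid on all of $\mathcal{N}_{\lambda}^{-}$, gives a uniform lower bound $\|(y_1,y_2)\|_{1,p}\geq c_*>0$ (since the exponent $\tfrac{\kappa_1+\kappa_2+2}{p}$ exceeds $1$ and the norm of a nonzero element is positive), and then pass to the limit directly in the defining inequality \eqref{uv < ualpha + 1vbeta + 1}; this yields $\int_{\Omega}|{y_1}_*|^{\kappa_1+1}|{y_2}_*|^{\kappa_2+1}dz>0$ in one stroke and subsumes the paper's case analysis, because positivity of the coupling integral already forces both components to be nonzero. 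This is a clean simplification and is fully rigorous. Two cosmetic points: your case analysis on the position of $t_2^{*}$ relative to $t_1^{\,n}$ is unnecessary, since Lemma \ref{lem3.4} gives $\psi_{(y_{1,n},y_{2,n})}(1)=J(y_{1,n},y_{2,n})>0$ while the fibering map is negative on $(0,t_1^{\,n}]$, so $t=1$ is the \emph{global} maximizer and $J(ty_{1,n},ty_{2,n})\leq J(y_{1,n},y_{2,n})$ for every $t>0$ (this is how the paper states it); and elements of $\mathcal{N}_{\lambda}^{-}$ satisfy the strict inequality $\psi_{(y_{1,n},y_{2,n})}^{\prime\prime}(1)<0$, which passes to $\leq0$ in the limit exactly as you then use it.
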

\begin{proof}
Choose $(y_1, y_2)\in W_{0}^{1, \mathcal{H}}(\Omega)\times W_{0}^{1, \mathcal{H}}(\Omega)$ satisfied $\int_{\Omega}\vert y_1\vert^{\kappa_1 + 1}\vert y_2\vert^{\kappa_2 + 1}dz > 0$. So, by Lemma \ref{t1t2}, one has $\mathcal{N}_{\lambda}^{ - }\neq\emptyset$ and, by Lemma \ref{bddblow}, $J\vert_{\mathcal{N}_{\lambda}^{ - }}$ is bounded from below. Now let us consider a minimizing sequence $({y_1}_n, {y_2}_n)\in \mathcal{N}_{\lambda}^{ - }$ such that when $n\rightarrow \infty$,
\begin{equation*}
  J({y_1}_n, {y_2}_n)\searrow \inf_{(y_1,y_2)\in\mathcal{N}_{\lambda}^{ - }}J(y_1, y_2).
\end{equation*}
By Lemma \ref{bddblow} both sequences $\{ {y_1}_n\}$ and $\{ {y_2}_n\}$ are bounded in $W_{0}^{1, \mathcal{H}}(\Omega)$ and so, by the reflexivity of $W_{0}^{1, \mathcal{H}}(\Omega)$, there exist $({y_1}_*,{y_2}_*)\in W_{0}^{1, \mathcal{H}}(\Omega)$ and a suitable subsequence, still denoted by $\{( {y_1}_n,{y_2}_n)\}$ such that
\begin{equation*}
  ({y_1}_n, {y_2}_n)\rightharpoonup ({y_1}_*, {y_2}_*)\  {\rm in }\ W_{0}^{1, \mathcal{H}}(\Omega)\times W_{0}^{1, \mathcal{H}}(\Omega),
\end{equation*}
and from Lemma \ref{prop2.2(Aro)}$(iii)$,
\begin{equation*}
  {y_1}_n\rightarrow {y_1}_* \  {\rm in }\  L^{r}(\Omega), \ \ {y_2}_n\rightarrow {y_2}_*\  {\rm in }\  L^{r}(\Omega), \ \ 1\leq r < p^*.
\end{equation*}
Since $1 < p < p^*$ and the assumption $(H_{3})$ holds, \cite{GuoW} gives the fact
\begin{equation*}
  \int_{\Omega}\vert {y_1}_n\vert^{\kappa_1 + 1}\vert {y_2}_n\vert^{\kappa_2 + 1}dz\rightarrow\int_{\Omega}\vert {y_1}_*\vert^{\kappa_1 + 1}\vert {y_2}_*\vert^{\kappa_2 + 1}dz,\ {\rm\ as}\ n\rightarrow \infty.
\end{equation*}
 %then $\|{y_1}_*\|_p^p = 0$, and there exist $n_{0}$such that $\|{y_1}_n\|_p^p < 1$, for $n > n_{0}$. Combining with $({y_1}_n, {y_2}_n)\in \mathcal{N}_{\lambda}^{ - }$, and the sobolev embedding $W_{0}^{1, p}\hookrightarrow L^{r}$, $p\leq r\leq p^*$, and H\"{o}lder inequality, one has
%\begin{equation*}
%\begin{split}
%   (p - 1)C_{22}\|{y_1}_n\|_{r}^p\|{y_2}_n\|_{r}^p\leq & (p - 1)C_{22}( \|{y_1}_n\|_{r}^p + \|{y_2}_n\|_{r}^p) \\
%\leq& (p - 1) \| ({y_1}_n, {y_2}_n) \| \leq \lambda C_{27} \| {y_1}_n\|_{r}^{\kappa_1 + 1}\| {y_2}_n\|_{r}^{\kappa_2 + 1},
%\end{split}
%\end{equation*}
%which means
%\begin{equation*}
% \frac{ (p - 1)C_{22}}{\lambda C_{27}}\leq \| {y_1}_n\|_{r}^{\kappa_1 + 1 - p}\| {y_2}_n\|_{r}^{\kappa_2 + 1 - p}.
%\end{equation*}
%Let $n\rightarrow\infty$, we have $\frac{ (p - 1)C_{22}}{\lambda C_{27}}\leq 0$, its a contradiction. Similarly there is ${y_2}_*\neq 0$.

We claim that ${y_1}_*\neq0$ and ${y_2}_*\neq0$. In fact, since $({y_1}_n, {y_2}_n)\in \mathcal{N}_{\lambda}^{ - }$, we have \eqref{uv < ualpha + 1vbeta + 1}. Let $m_1$ and $m_2$ be defined as the proof of Lemma \ref{3.2},
\begin{equation*}
  m_1 = p^* - \epsilon_{0}(\kappa_1 + 1), \ \ m_2 = \frac{[p^* - \epsilon_{0}(\kappa_1 + 1)](\kappa_2 + 1)}{p^* - (\epsilon_{0} + 1)(\kappa_1 + 1)}.
\end{equation*}
By using Poincar\'{e}'s inequality and \eqref{uvholder}, \eqref{uv < ualpha + 1vbeta + 1} comes to
\begin{equation}\label{u*v*alpha + 1beta + 1neq0}
  \begin{split}
   C_{18}\left( \|{y_1}_n\|_{m_1}^p + \|{y_2}_n\|_{m_2}^p\right) &\leq (p - 1) \| ({y_1}_n, {y_2}_n) \|_{1,p}\\
    &\leq \lambda(\kappa_1 + \kappa_2 + 2)(\kappa_1 + \kappa_2 + 1)\int_{\Omega}\vert {y_1}_n\vert^{\kappa_1 + 1}\vert {y_2}_n\vert^{\kappa_2 + 1}dz\\
  &\leq \lambda (\kappa_1 + \kappa_2 + 2)(\kappa_1 + \kappa_2 + 1)C_{8} \| {y_1}_n\|_{m_1}^{\kappa_1 + 1}\| {y_2}_n\|_{m_2}^{\kappa_2 + 1},
\end{split}
\end{equation}
here, $C_{18}$ is a positive constant. Again since $({y_1}_n, {y_2}_n)\in \mathcal{N}_{\lambda}^{ - }$, we know $({y_1}_n, {y_2}_n)\neq\{(0, 0)\}$. Thus, for some $C_{19}:=(\kappa_1 + \kappa_2 + 2)(\kappa_1 + \kappa_2 + 1)C_{8} >0$, \eqref{u*v*alpha + 1beta + 1neq0} comes to
\begin{equation}\label{chengji}
\begin{split}
  C_{18}\leq \lambda C_{19}\frac{\| {y_1}_n\|_{m_1}^{\kappa_1 + 1}\| {y_2}_n\|_{m_2}^{\kappa_2 + 1}}{ \|{y_1}_n\|_{m_1}^p + \|{y_2}_n\|_{m_2}^p}.
\end{split}
\end{equation}
%Again since $({y_1}_n, {y_2}_n)\in \mathcal{N}_{\lambda}^{ - }$, we know $({y_1}_n, {y_2}_n)\neq\{(0, 0)\}$. If $\{{y_1}_n\}\neq0$, one has
%\begin{equation*}
%   \|{y_1}_n\|_{m_1}^p + \|{y_2}_n\|_{m_2}^p\geq  \|{y_1}_n\|_{m_1}^p>0,
%\end{equation*}
%hence
%\begin{equation*}
%   C_{19}\leq \lambda C_{20}\frac{\| {y_1}_n\|_{m_1}^{\kappa_1 + 1}\| {y_2}_n\|_{m_2}^{\kappa_2 + 1 - p}}{ \|{y_1}_n\|_{m_1}^p + \|{y_2}_n\|_{m_2}^p}\leq \lambda C_{20}\frac{\| {y_1}_n\|_{m_1}^{\kappa_1 + 1}\| {y_2}_n\|_{m_2}^{\kappa_2 + 1}} {\|{y_1}_n\|_{m_1}^p} = \lambda C_{20}\| {y_1}_n\|_{m_1}^{\kappa_1 + 1 - p}\| {y_2}_n\|_{m_2}^{\kappa_2 + 1}.
%\end{equation*}
%Let $n\rightarrow\infty$, we have $C_{19}\leq0$, a contradiction. If $\{{y_2}_n\}\neq\{0\}$,
By the use of Young's inequality and the properties of concave function $t\rightarrow t^{1/p}$, one has for some positive constant $C_{20}$,
\begin{equation}\label{122}
   \begin{split}
      \|{y_1}_n\|_{m_1}^{\frac{\kappa_1 + 1}{\kappa_1 + \kappa_2 + 2}}\|{y_2}_n\|_{m_2}^{\frac{\kappa_2 + 1}{\kappa_1 + \kappa_2 + 2}}&\leq\frac{(\kappa_1 + 1) \|{y_1}_n\|_{m_1}}{\kappa_1 + \kappa_2 + 2} + \frac{(\kappa_2 + 1)\|{y_2}_n\|_{m_2}}{\kappa_1 + \kappa_2 + 2}\\
      & \leq  \|{y_1}_n\|_{m_1} + \|{y_2}_n\|_{m_2}\leq C_{20}(\|{y_1}_n\|_{m_1}^{p} + \|{y_2}_n\|_{m_2}^{p})^{\frac{1}{p}}.
   \end{split}
\end{equation}
Then from \eqref{chengji} and \eqref{122}, we have
\begin{equation}\label{neq11}
\begin{split}
\left( \frac{C_{18}}{\lambda  C_{19} C_{20}^{\kappa_1 + \kappa_2 + 2}}\right)^{\frac{p}{\kappa_1 + \kappa_2 + 2}} \leq  \left( \|{y_1}_n\|_{m_1}^{p} + \|{y_2}_n\|_{m_2}^{p}\right).
\end{split}
\end{equation}
By taking $\lim_{n\rightarrow\infty}$, we know ${y_1}_*\neq 0$ or ${y_2}_*\neq0$. For case of ${y_1}_* = 0$ and ${y_2}_*\neq0$, thus there exists $N\in\mathbb{N}$ large enough such that $\|{y_2}_n\|_{m_2}\neq 0$ for all $n\geq N$. Then
\begin{equation*}
   \|{y_1}_n\|_{m_1}^p + \|{y_2}_n\|_{m_2}^p\geq  \|{y_2}_n\|_{m_2}^p>0 \ \ {\rm for}\  n\geq N,
\end{equation*}
hence for $n\geq N$, \eqref{chengji} comes to
\begin{equation*}
   C_{18}\leq\lambda C_{19}\| {y_1}_n\|_{m_1}^{\kappa_1 + 1}\| {y_2}_n\|_{m_2}^{\kappa_2 + 1 - p}.
\end{equation*}
By taking $\lim_{n\rightarrow\infty}$, we get $C_{18}\leq0$, a contradiction. Thus ${y_1}_*\neq 0$. The proof of the fact that ${y_2}_*\neq 0$ is similar, and we omit it.

Now we take $\lim_{n\rightarrow\infty}$ in \eqref{u*v*alpha + 1beta + 1neq0} to obtain that $\int_{\Omega}\vert {y_1}_*\vert^{\kappa_1 + 1}\vert {y_2}_*\vert^{\kappa_2 + 1}dx > 0$. Then, by Lemma \ref{t1t2}, there exists $t_2 > 0$ such that $t_2({y_1}_*, {y_2}_*)\in \mathcal{N}_{\lambda}^{ - }$.

Now, we prove that, after pass to a subsequence if necessary, $\{( {y_1}_n,{y_2}_n)\}$
converges strongly in $W_{0}^{1, \mathcal{H}}(\Omega)\times W_{0}^{1, \mathcal{H}}(\Omega)$ to $({y_1}_*,{y_2}_*)$. To do it, it is enough to show that
\begin{equation}\label{jixianxiaoyuzhi}
  \liminf\limits_{n \rightarrow \infty}\rho_{\mathcal{H}}(\nabla {y_1}_n) \leq \rho_{\mathcal{H}}(\nabla {y_1}_*)\ {\rm and}\  \ \liminf\limits_{n \rightarrow \infty}\rho_{\mathcal{H}}(\nabla {y_2}_n)  \leq \rho_{\mathcal{H}}(\nabla {y_2}_*).
\end{equation}
Indeed, if \eqref{jixianxiaoyuzhi} holds, we can assume, after pass to a subsequence if necessary, (still denoted by $({y_1}_n,{y_2}_n)$), that
\begin{equation*}
  \lim_{n \rightarrow \infty}\rho_{\mathcal{H}}(\nabla {y_1}_n) \leq \rho_{\mathcal{H}}(\nabla {y_1}_*), \ \lim_{n \rightarrow \infty}\rho_{\mathcal{H}}(\nabla {y_2}_n)  \leq \rho_{\mathcal{H}}(\nabla {y_2}_*).
\end{equation*}
Since $({y_1}_n, {y_2}_n)\rightharpoonup ({y_1}_*, {y_2}_*)\  {\rm in }\ W_{0}^{1, \mathcal{H}}(\Omega)\times W_{0}^{1, \mathcal{H}}(\Omega)$ and since the integrand function of $\rho_{\mathcal{H}}$ is uniformly convex, it follows from the weak lower semi-continuity of the norms and
seminorms and Lemma \ref{prop2.2(liu)}$(iv)$ that (see \cite{Arora}, Page 13)
\begin{equation*}
  \lim_{n \rightarrow \infty}\|({y_1}_n,{y_2}_n)-({y_1}_*,{y_2}_*)\|=\lim_{n \rightarrow \infty}\left[\|\nabla( {y_1}_n-{y_1}_*)\|_{\mathcal{H}} + \|\nabla ( {y_2}_n-{y_2}_*)\|_{\mathcal{H}}\right] =0.
\end{equation*}
Which means $\{( {y_1}_n,{y_2}_n)\}$
converges strongly in $W_{0}^{1, \mathcal{H}}(\Omega)\times W_{0}^{1, \mathcal{H}}(\Omega)$ to $({y_1}_*,{y_2}_*)$.

To prove \eqref{jixianxiaoyuzhi} we proceed by contradiction.
%e.g., \cite{Berger}, Page 31
%Next, we prove $\rho_{\mathcal{H}}(\nabla {y_1}_{n})\rightarrow\rho_{\mathcal{H}}(\nabla {y_1}_{*}) $ and $\rho_{\mathcal{H}}(\nabla y_{n})\rightarrow\rho_{\mathcal{H}}(\nabla y_{*}) $ as $n\rightarrow\infty$.
%
%\begin{itemize}[itemindent = 2.5em]
%  \item [Claim:]\ \ $\liminf\limits_{n \rightarrow \infty}\rho_{\mathcal{H}}(\nabla {y_1}_n) = \rho_{\mathcal{H}}(\nabla {y_1}_*)$ and $\liminf\limits_{n \rightarrow \infty}\rho_{\mathcal{H}}(\nabla {y_2}_n) = \rho_{\mathcal{H}}(\nabla {y_2}_*)$.
%\end{itemize}
Suppose that either $\liminf\limits_{n \rightarrow \infty}\rho_{\mathcal{H}}(\nabla {y_1}_n) > \rho_{\mathcal{H}}(\nabla {y_1}_*)$ or $\liminf\limits_{n \rightarrow \infty}\rho_{\mathcal{H}}(\nabla {y_2}_n) > \rho_{\mathcal{H}}(\nabla {y_2}_*)$. We may have the following three cases:
\begin{itemize}[itemindent = 1.7em]
\item [Case (a):]\ \ $\liminf\limits_{n \rightarrow \infty}\rho_{\mathcal{H}}(\nabla {y_1}_n) > \rho_{\mathcal{H}}(\nabla {y_1}_*), \ \liminf\limits_{n \rightarrow \infty}\rho_{\mathcal{H}}(\nabla {y_2}_n)  = \rho_{\mathcal{H}}(\nabla {y_2}_*)$.
\item [Case (b):]\ \ $\liminf\limits_{n \rightarrow \infty}\rho_{\mathcal{H}}(\nabla {y_1}_n) = \rho_{\mathcal{H}}(\nabla {y_1}_*), \ \liminf\limits_{n \rightarrow \infty}\rho_{\mathcal{H}}(\nabla {y_2}_n)  > \rho_{\mathcal{H}}(\nabla {y_2}_*)$.
\item [Case (c):]\ \ $\liminf\limits_{n \rightarrow \infty}\rho_{\mathcal{H}}(\nabla {y_1}_n) > \rho_{\mathcal{H}}(\nabla {y_1}_*), \ \liminf\limits_{n \rightarrow \infty}\rho_{\mathcal{H}}(\nabla {y_2}_n)  > \rho_{\mathcal{H}}(\nabla {y_2}_*)$.
\end{itemize}

For Case (a), %we have
%\begin{equation*}
%  \begin{split}
%\liminf\limits_{n \rightarrow \infty}& \psi_{({y_1}_n, {y_2}_n)}^{\prime}(t_2) \\
%      = & \liminf\limits_{n \rightarrow \infty} \bigg[t_2^{p - 1}\|({y_1}_n, {y_2}_n)\| + t_2^{q - 1}\int_{\Omega} \eta\vert\nabla {y_1}_n\vert^{q}dx\\
%    & + t_2^{q - 1}\int_{\Omega} \eta\vert\nabla {y_2}_n\vert^{q}dx - t_2^{ - \nu}\int_{\Omega}\left(a_1\vert {y_1}_n \vert^{1 - \nu} + a_2\vert {y_2}_n \vert^{1 - \nu}\right)dx\\
%    & - t_2^{\kappa_1 + \kappa_2 + 1}\lambda(\kappa_1 + \kappa_2 + 2)\int_{\Omega}\vert {y_1}_n\vert^{\kappa_1 + 1}\vert {y_2}_n\vert^{\kappa_2 + 1}dx\bigg]\\
%       >  &t_2^{p - 1} \|({y_1}_*, {y_2}_*)\| + t_2^{q - 1}\int_{\Omega} \eta\vert\nabla {y_1}_*\vert^{q}dx\\
%    &  + t_2^{q - 1}\int_{\Omega} \eta\vert\nabla {y_2}_*\vert^{q}dx - t_2^{ - \nu}\int_{\Omega}\left(a_1\vert {y_1}_* \vert^{1 - \nu} + a_2\vert {y_2}_* \vert^{1 - \nu}\right)dx\\
%    & - t_2^{\kappa_1 + \kappa_2 + 1}\lambda(\kappa_1 + \kappa_2 + 2)\int_{\Omega}\vert {y_1}_*\vert^{\kappa_1 + 1}\vert {y_2}_*\vert^{\kappa_2 + 1}dx\\
%     = &0.\ \ ({\rm because\ of\ }(t_2{y_1}_*, t_2{y_2}_*)\in \mathcal{N}_{\lambda}^{ - })
%  \end{split}
%\end{equation*}
according to $({y_1}_n, {y_2}_n)\in\mathcal{N}_{\lambda}^{ - }$, by Lemma \ref{t1t2}, we have $J({y_1}_n,{y_2}_n)=\max_{t\in(0,\infty)}J(t {y_1}_n, t {y_2}_n)$. By use of the weak lower semi$\mbox{-}$continuity of the norms and seminorms and Lebesgue’s dominated convergence theorem, we have
\begin{equation*}
\inf_{\mathcal{N}_{\lambda}^{ - }}J(y_1, y_2) \leq J(t_2{y_1}_*, t_2{y_2}_*) < \liminf\limits_{n \rightarrow \infty} J(t_2{y_1}_n, t_2{y_2}_n)\leq \liminf\limits_{n \rightarrow \infty} J({y_1}_n, {y_2}_n) = \inf_{\mathcal{N}_{\lambda}^{ - }}J(y_1, y_2).
\end{equation*}
This is a contradiction.

The proof for the Cases (b) and (c) are similar to the given for the Case (a) and we omit them. Thus $ \liminf\limits_{n \rightarrow \infty}\rho_{\mathcal{H}}(\nabla {y_1}_n) \leq \rho_{\mathcal{H}}(\nabla {y_1}_*)$, $\liminf\limits_{n \rightarrow \infty}\rho_{\mathcal{H}}(\nabla {y_2}_n)  \leq \rho_{\mathcal{H}}(\nabla {y_2}_*)$, and then, $({y_1}_n, {y_2}_n)\rightarrow ({y_1}_*, {y_2}_*)\  {\rm in }\ W_{0}^{1, \mathcal{H}}(\Omega)\times W_{0}^{1, \mathcal{H}}(\Omega)$. According to the continuity of $J(y_1, y_2)$, one has $J({y_1}_n, {y_2}_n)\rightarrow J({y_1}_*, {y_2}_*)$, thus $J({y_1}_*, {y_2}_*) = \inf_{\mathcal{N}_{\lambda}^{ - }}J(y_1, y_2)$.
%
%From the Claim we know that we can find two subsequences (still denoted by ${y_1}_n$, ${y_2}_n$) such that $\rho_{\mathcal{H}}(\nabla {y_1}_{n})\rightarrow\rho_{\mathcal{H}}(\nabla {y_1}_{*}) $ and $\rho_{\mathcal{H}}(\nabla y_{n})\rightarrow\rho_{\mathcal{H}}(\nabla y_{*}) $ as $n\rightarrow\infty$. It follows from Proposition \ref{prop2.2(liu)}$(iv)$ that $({y_1}_n, {y_2}_n)\rightarrow ({y_1}_*, {y_2}_*)\  {\rm in }\ W_{0}^{1, \mathcal{H}}(\Omega)\times W_{0}^{1, \mathcal{H}}(\Omega)$. According to the continuity of $J(y_1, y_2)$, one has $J({y_1}_n, {y_2}_n)\rightarrow J({y_1}_*, {y_2}_*)$, thus $J({y_1}_*, {y_2}_*) = \inf_{\mathcal{N}_{\lambda}^{ - }}J(y_1, y_2)$.
\par Since $ \psi_{({y_1}_n, {y_2}_n)}^{\prime\prime}(1) < 0$, by taking the limit as $n\rightarrow \infty$ we obtain  $\psi_{({y_1}_*, {y_2}_*)}^{\prime\prime}(1)\leq 0$. Again since Lemma \ref{3.2}, we know $\mathcal{N}_{\lambda}^{0} = \emptyset$ for $\lambda\in (0, \lambda^*).$ So, $({y_1}_*, {y_2}_*)\in \mathcal{N}_{\lambda}^{ - }$.

\vskip0.5cm
Since $J(\vert y_1\vert, \vert y_2\vert) = J(y_1, y_2)$, we may assume that ${y_1}_*, {y_2}_*$ are nonnegative.
\end{proof}
\begin{lemma}\label{n +  + }
Suppose the assumptions $(H_1)$, $(H_2)$, $(H_{3})$ hold, then for $\lambda\in (0, \lambda^*), $ there exists $({y_1}^*, {y_2}^*)\in\mathcal{N}_{\lambda}^{ + }$ such that
\begin{equation*}
  J({y_1}^*, {y_2}^*) = \inf_{(y_1,y_2)\in\mathcal{N}_{\lambda}^{ + }}J(y_1, y_2),
\end{equation*}
and ${y_1}^*(z), {y_2}^*(z)\geq0$ for a.e. $z\in\Omega$.
\end{lemma}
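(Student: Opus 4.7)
Plan: The argument will parallel that of Lemma \ref{N--}, with adaptations reflecting the structural differences of $\mathcal{N}_\lambda^+$: its elements are local \emph{minima} of the fibering map $\psi_{(y_1,y_2)}$ (so $\psi''(1)>0$ rather than $<0$), and by Lemma \ref{jn +  +  < 0} one has $\inf_{\mathcal{N}_\lambda^+} J<0$. Note first that $\mathcal{N}_\lambda^+\neq\emptyset$ by Lemma \ref{t3}. I take a minimizing sequence $\{({y_1}_n,{y_2}_n)\}_n\subset\mathcal{N}_\lambda^+$; by Lemma \ref{bddblow} and reflexivity, extract a subsequence (still denoted the same) converging weakly in $W_0^{1,\mathcal{H}}(\Omega)\times W_0^{1,\mathcal{H}}(\Omega)$ and strongly in $L^r(\Omega)\times L^r(\Omega)$ for all $r\in[1,p^*)$ (Lemma \ref{prop2.2(Aro)}(iii)) to some $({y_1}^*,{y_2}^*)$. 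Weak lower semi-continuity of the modular $\rho_\mathcal{H}$ (and hence of the gradient parts of $J$), combined with the strong $L^r$-convergence of the singular integrals $\int a_i|y_{i,n}|^{1-\nu}\,dz$ and the coupling integral $\int|y_{1,n}|^{\kappa_1+1}|y_{2,n}|^{\kappa_2+1}\,dz$ (as in Lemma \ref{N--}), yields $J({y_1}^*,{y_2}^*)\leq\liminf_n J({y_1}_n,{y_2}_n)=\inf_{\mathcal{N}_\lambda^+}J<0$, so $({y_1}^*,{y_2}^*)\neq(0,0)$.

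By Lemma \ref{t3} there is a unique $t_3>0$ such that $t_3({y_1}^*,{y_2}^*)\in\mathcal{N}_\lambda^+$; the lemma will follow once I establish $t_3=1$. Passing to the $\liminf$ in the Nehari identity $\psi'_{({y_1}_n,{y_2}_n)}(1)=0$, using weak lower semi-continuity of the gradient parts and strong convergence of the nonlinear integrals, gives $\psi'_{({y_1}^*,{y_2}^*)}(1)\leq 0$. If $\int_\Omega|{y_1}^*|^{\kappa_1+1}|{y_2}^*|^{\kappa_2+1}\,dz=0$, then the proof of Lemma \ref{t3} shows $\psi_{({y_1}^*,{y_2}^*)}$ is strictly convex with unique global minimizer $t_3$, and the chain $\psi_{({y_1}^*,{y_2}^*)}(1)=J({y_1}^*,{y_2}^*)\leq\inf_{\mathcal{N}_\lambda^+}J\leq\psi_{({y_1}^*,{y_2}^*)}(t_3)$ together with strict convexity forces $1=t_3$. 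If $\int_\Omega|{y_1}^*|^{\kappa_1+1}|{y_2}^*|^{\kappa_2+1}\,dz>0$, Lemma \ref{t1t2} provides $t_3=t_1^*<t_2^*$ with $\psi_{({y_1}^*,{y_2}^*)}$ strictly decreasing on $(0,t_1^*)$ and strictly increasing on $(t_1^*,t_2^*)$; then $\psi'_{({y_1}^*,{y_2}^*)}(1)\leq 0$ places $1\in(0,t_1^*]\cup[t_2^*,\infty)$. The interval $(0,t_1^*]$ yields $1=t_1^*=t_3$ by monotonicity, while $1=t_2^*$ would make $({y_1}^*,{y_2}^*)\in\mathcal{N}_\lambda^-$ with $J>0$ by Lemma \ref{lem3.4}, contradicting $J({y_1}^*,{y_2}^*)<0$.

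The main obstacle is to exclude $1>t_2^*$ in this last subcase, where $\psi_{({y_1}^*,{y_2}^*)}$ has already decreased past $\psi_{({y_1}^*,{y_2}^*)}(t_3)$ and dips to $-\infty$, so the inequality $\psi_{({y_1}^*,{y_2}^*)}(1)\leq\psi_{({y_1}^*,{y_2}^*)}(t_3)$ is not by itself contradictory. Unlike Lemma \ref{N--}, where the identity $J({y_1}_n,{y_2}_n)=\max_t J(t{y_1}_n,t{y_2}_n)$ produced the required contradiction, here $\mathcal{N}_\lambda^+$ elements satisfy only a local-minimum identity; to resolve this I plan a more delicate strong-convergence argument that combines $\psi_n'(1)=0$ and $\psi_n''(1)>0$ with the strong convergence of the lower-order integrals (adapting the Cases (a)--(c) analysis of Lemma \ref{N--} to force the gradient norms to converge), thereby upgrading $\psi'_{({y_1}^*,{y_2}^*)}(1)\leq 0$ and $\psi''_{({y_1}^*,{y_2}^*)}(1)\geq 0$ to the equality $\psi'_{({y_1}^*,{y_2}^*)}(1)=0$ with $\psi''_{({y_1}^*,{y_2}^*)}(1)> 0$ (the latter via Lemma \ref{3.2}, i.e., $\mathcal{N}_\lambda^0=\emptyset$), hence $({y_1}^*,{y_2}^*)\in\mathcal{N}_\lambda^+$ and $t_3=1$. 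Once this is done, $J({y_1}^*,{y_2}^*)=\inf_{\mathcal{N}_\lambda^+}J$, and nonnegativity of the components follows from $J(|y_1|,|y_2|)=J(y_1,y_2)$, allowing the replacement of $({y_1}^*,{y_2}^*)$ by $(|{y_1}^*|,|{y_2}^*|)$.
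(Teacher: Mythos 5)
Your overall architecture matches the paper's: a minimizing sequence in $\mathcal{N}_\lambda^+$, boundedness from Lemma \ref{bddblow}, a weak limit $({y_1}^*,{y_2}^*)\neq(0,0)$ because $J({y_1}^*,{y_2}^*)\leq\liminf_n J({y_1}_n,{y_2}_n)<0$, and membership of the limit in $\mathcal{N}_\lambda^+$ via $\mathcal{N}_\lambda^0=\emptyset$ once strong convergence is in hand. Your treatment of the degenerate case $\int_\Omega|{y_1}^*|^{\kappa_1+1}|{y_2}^*|^{\kappa_2+1}\,dz=0$ (strict convexity of $\psi_{({y_1}^*,{y_2}^*)}$, which bypasses strong convergence entirely) and of the sub-cases $1\leq t_1^*$ and $1=t_2^*$ is sound and in places more economical than the paper's.

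However, there is a genuine gap exactly where you flag ``the main obstacle'': the case $1>t_2^*$ is never excluded. You only announce a \emph{plan} to adapt the Cases (a)--(c) analysis of Lemma \ref{N--}, while correctly observing that the ingredient which makes that analysis work on $\mathcal{N}_\lambda^-$ --- the identity $J({y_1}_n,{y_2}_n)=\max_t J(t{y_1}_n,t{y_2}_n)$ --- is unavailable on $\mathcal{N}_\lambda^+$; the substitute ingredient is precisely the nontrivial content of the lemma and is missing. The paper closes it as follows. Let $t_2$ be the unique scaling with $t_2({y_1}^*,{y_2}^*)\in\mathcal{N}_\lambda^+$ (Lemma \ref{t3}). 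If one of the modular liminfs exceeds that of the limit strictly, then $\liminf_n\psi'_{({y_1}_n,{y_2}_n)}(t_2)>\psi'_{({y_1}^*,{y_2}^*)}(t_2)=0$, so $\psi'_{({y_1}_n,{y_2}_n)}(t_2)>0$ for large $n$; since each $({y_1}_n,{y_2}_n)\in\mathcal{N}_\lambda^+$ has $\psi'_{({y_1}_n,{y_2}_n)}(t)<0$ for all $t\in(0,1)$ (the point $t=1$ is the \emph{first} zero of its fibering map, by \eqref{psiandphitu} and Lemma \ref{t1t2}), this forces $t_2>1$; and because $\psi_{({y_1}^*,{y_2}^*)}$ is decreasing on $(0,t_2)$ one concludes $\inf_{\mathcal{N}_\lambda^+}J\leq J(t_2{y_1}^*,t_2{y_2}^*)\leq J({y_1}^*,{y_2}^*)<\liminf_n J({y_1}_n,{y_2}_n)=\inf_{\mathcal{N}_\lambda^+}J$, a contradiction. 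The decisive step is thus the uniform geometric information on the approximating fibering maps $\psi_{({y_1}_n,{y_2}_n)}$, which your case analysis on the position of $1$ relative to $t_1^*,t_2^*$ (i.e., on $\psi_{({y_1}^*,{y_2}^*)}$ alone) cannot supply. Until this, or an equivalent argument, is carried out, the lemma is not proved.
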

\begin{proof}
As in the proof of Lemma \ref{N--} there exist $({y_1}^*,{y_2}^*)\in W_{0}^{1, \mathcal{H}}(\Omega)\times W_{0}^{1, \mathcal{H}}(\Omega)$, and a subsequence, still denoted by $({y_1}_n,{y_2}_n)$, such
that $({y_1}_n, {y_2}_n)\rightharpoonup ({y_1}^*, {y_2}^*)\  {\rm in }\ W_{0}^{1, \mathcal{H}}(\Omega)\times W_{0}^{1, \mathcal{H}}(\Omega)$. So, from the weak lower semi$\mbox{-}$continuity of the involved norms and seminorms, and using the Lebesgueís
dominated convergence theorem, as well as Lemma \ref{t3}, we get
%From Lemma \ref{t3}, one has $\mathcal{N}_{\lambda}^{ + }\neq\emptyset$. Again since Lemma \ref{bddblow}, we know the existence of $\inf_{\mathcal{N}_{\lambda}^{ + }}J(y_1, y_2)$.
%
%Let $({y_1}_n, {y_2}_n)\in\mathcal{N}_{\lambda}^{ + }$ such that when $n\rightarrow \infty$,
%\begin{equation*}
% J({y_1}_n, {y_2}_n)\searrow \inf_{(y_1,y_2)\in\mathcal{N}_{\lambda}^{ + }}J(y_1, y_2).
%\end{equation*}
%Here we only give the steps to prove strong convergence $({y_1}_n, {y_2}_n)\rightarrow ({y_1}^*, {y_2}^*) \in W_{0}^{1, \mathcal{H}}(\Omega)\times W_{0}^{1, \mathcal{H}}(\Omega)$. So,  from Lemma \ref{jn +  +  < 0}, weak lower semi$\mbox{-}$continuity of the norms and seminorms and Lebesgue’s dominated convergence theorem, there is
\begin{equation*}
  J({y_1}^*, {y_2}^*)\leq\liminf_{n\rightarrow\infty}J({y_1}_n, {y_2}_n) < 0 = J(0, 0),
\end{equation*}
thus, $({y_1}^*, {y_2}^*)\neq \{(0, 0)\}$, and then, according to Lemma \ref{t3}, we can find $t_2({y_1}^*, {y_2}^*)\in\mathcal{N}_{\lambda}^{ + }$.

Now we prove that, after pass to a further subsequence if necessary, $({y_1}_n, {y_2}_n)\rightharpoonup ({y_1}^*, {y_2}^*)\  {\rm in }\ W_{0}^{1, \mathcal{H}}(\Omega)\times W_{0}^{1, \mathcal{H}}(\Omega)$. Proceeding as in the proof of Lemma \ref{N--}, it is enough to see that each one of the following three cases is impossible,
\begin{itemize}[itemindent = 1.7em]
\item [Case (a):]\ \ $\liminf\limits_{n \rightarrow \infty}\rho_{\mathcal{H}}(\nabla {y_1}_n) > \rho_{\mathcal{H}}(\nabla {y_1}^*), \ \liminf\limits_{n \rightarrow \infty}\rho_{\mathcal{H}}(\nabla {y_2}_n)  = \rho_{\mathcal{H}}(\nabla {y_2}^*)$.
\item [Case (b):]\ \ $\liminf\limits_{n \rightarrow \infty}\rho_{\mathcal{H}}(\nabla {y_1}_n) = \rho_{\mathcal{H}}(\nabla {y_1}^*), \ \liminf\limits_{n \rightarrow \infty}\rho_{\mathcal{H}}(\nabla {y_2}_n)  > \rho_{\mathcal{H}}(\nabla {y_2}^*)$.
\item [Case (c):]\ \ $\liminf\limits_{n \rightarrow \infty}\rho_{\mathcal{H}}(\nabla {y_1}_n) > \rho_{\mathcal{H}}(\nabla {y_1}^*), \ \liminf\limits_{n \rightarrow \infty}\rho_{\mathcal{H}}(\nabla {y_2}_n)  > \rho_{\mathcal{H}}(\nabla {y_2}^*)$.
\end{itemize}

For Case (a), we have
\begin{equation*}
  \begin{split}
\liminf\limits_{n \rightarrow \infty}\  \psi_{({y_1}_n, {y_2}_n)}^{\prime}(t_2) = & \liminf\limits_{n \rightarrow \infty} \bigg\{t_2^{p - 1}\|({y_1}_n, {y_2}_n)\|_{1,p}+t_2^{q - 1}\|(\nabla {y_1}_n, \nabla {y_2}_n)\|_{q,\eta}\\
&- t_2^{ - \nu}\int_{\Omega}\left[\vert {y_1}_n \vert^{1 - \nu} + a_2\vert {y_2}_n \vert^{1 - \nu}\right]dz\\
&- t_2^{\kappa_1 + \kappa_2 + 1}\lambda(\kappa_1 + \kappa_2 + 2)\int_{\Omega}\vert {y_1}_n\vert^{\kappa_1 + 1}\vert {y_2}_n\vert^{\kappa_2 + 1}dz\bigg\}\\
       >  &t_2^{p - 1} \|({y_1}^*, {y_2}^*)\|_{1,p} + t_2^{q - 1}\|(\nabla {y_1}^*,\nabla {y_2}^*)\|_{q,\eta} \\
       &-  t_2^{ - \nu}\int_{\Omega}\left[a_1\vert {y_1}^* \vert^{1 - \nu} + a_2\vert {y_2}^* \vert^{1 - \nu}\right]dz\\
       &- t_2^{\kappa_1 + \kappa_2 + 1}\lambda(\kappa_1 + \kappa_2 + 2)\int_{\Omega}\vert {y_1}^*\vert^{\kappa_1 + 1}\vert {y_2}^*\vert^{\kappa_2 + 1}dz\\
     = &0, \ \ ({\rm because\ of\ }(t_2{y_1}^*, t_2{y_2}^*)\in \mathcal{N}_{\lambda}^{ + }\subset\mathcal{N}_{\lambda} ).
  \end{split}
\end{equation*}
Which means
\begin{equation*}
  \liminf\limits_{n \rightarrow \infty} \psi_{({y_1}_n, {y_2}_n)}^{\prime}(t_2) > \psi_{({y_1}^*, {y_2}^*)}^{\prime}(t_2).
\end{equation*}
Thus there exists $n_{0}\in \mathbb{N}$ such that for all $n > n_{0}$, $\psi_{({y_1}_n, {y_2}_n)}^{\prime}(t_2) > 0$. According to $({y_1}_n, {y_2}_n)\in\mathcal{N}_{\lambda}^{ + }\subset\mathcal{N}_{\lambda}$ and \eqref{psiandphitu}, one has for all $0 < t < 1$,
\begin{equation*}
  \psi_{({y_1}_n, {y_2}_n)}^{\prime}(t) < 0.
\end{equation*}
Thus $t_2 > 1$ and $\psi_{({y_1}^*, {y_2}^*)}^{\prime}(t) < 0$ for all $t\in(0, t_2)$. Again since $(t_2{y_1}^*, t_2{y_2}^*)\in\mathcal{N}_{\lambda}^{ + }$, we have
\begin{equation*}
  \inf_{\mathcal{N}_{\lambda}^{ + }}J(y_1, y_2)\leq J(t_2{y_1}^*, t_2{y_2}^*) \leq J({y_1}^*, {y_2}^*) <  \liminf\limits_{n \rightarrow \infty} J({y_1}_n, {y_2}_n) = \inf_{\mathcal{N}_{\lambda}^{ + }}J(y_1, y_2).
\end{equation*}
This is a contradiction. A similar contradiction is reached also in the Cases (b) and (c). So $({y_1}_n, {y_2}_n)\rightarrow ({y_1}^*, {y_2}^*)$ in $W_{0}^{1, \mathcal{H}}(\Omega)\times W_{0}^{1, \mathcal{H}}(\Omega)$. We argue as in the proof of Lemma \ref{N--} and using $ \psi_{({y_1}_n, {y_2}_n)}^{\prime\prime}(1) > 0$ (because of $({y_1}_n, {y_2}_n)\in \mathcal{N}_{\lambda}^{ + } $), we obtain $({y_1}^*, {y_2}^*)\in\mathcal{N}_{\lambda}^{ + }$ and $J({y_1}^*, {y_2}^*) = \inf_{\mathcal{N}_{\lambda}^{ + }}J(y_1, y_2).$ The proof is complete.
\end{proof}

\vskip0.5cm
Inspired by Lemma 3 of \cite{SWL}, we have the following results.

\begin{lemma}\label{yinhanshun +  + }
 Suppose the assumptions $(H_1)$, $(H_2)$ hold, then for $(y_1, y_2)\in \mathcal{N}_{\lambda}^{ + }$, there exist a small enough positive constant $\epsilon$ and a functional denoted by
\begin{equation*}
  \xi:B_{\epsilon}(0) \rightarrow \mathbb{R}^{ + },
\end{equation*}
which is continuous and satisfied $\xi(0, 0) = 1$, and  for all $(x_1, x_2)\in B_{\epsilon}(0)$,
\begin{equation*}
  \xi(x_1, x_2)(y_1+ x_1, y_2 + x_2)\in\mathcal{N}_{\lambda}^{ + },
\end{equation*}
here, $B_{\epsilon}(0): = \{(y_1 , y_2 \in W_{0}^{1, \mathcal{H}}(\Omega)\times W_{0}^{1, \mathcal{H}}(\Omega)\  \vert\  \|(y_1 , y_2 )\| < \epsilon\}$.
\end{lemma}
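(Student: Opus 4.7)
The plan is to produce $\xi$ as the implicit solution of the scalar equation $\psi'_{(y_1+x_1,y_2+x_2)}(t)=0$ viewed as an equation for $t$ with parameter $(x_1,x_2)$. Concretely, I would introduce the auxiliary map $F:(0,\infty)\times W_0^{1,\mathcal{H}}(\Omega)\times W_0^{1,\mathcal{H}}(\Omega)\to\mathbb{R}$ defined by
\begin{align*}
F(t,x_1,x_2) &= t^{p-1}\|(y_1+x_1,y_2+x_2)\|_{1,p} + t^{q-1}\|(\nabla(y_1+x_1),\nabla(y_2+x_2))\|_{q,\eta} \\
&\quad - t^{-\nu}\int_\Omega\left[a_1|y_1+x_1|^{1-\nu}+a_2|y_2+x_2|^{1-\nu}\right]dz \\
&\quad - \lambda(\kappa_1+\kappa_2+2)t^{\kappa_1+\kappa_2+1}\int_\Omega|y_1+x_1|^{\kappa_1+1}|y_2+x_2|^{\kappa_2+1}dz.
\end{align*}
Since $(y_1,y_2)\in\mathcal{N}_\lambda^+\subset\mathcal{N}_\lambda$, one has $F(1,0,0)=\psi'_{(y_1,y_2)}(1)=0$, and the defining inequality of $\mathcal{N}_\lambda^+$ yields $\partial_tF(1,0,0)=\psi''_{(y_1,y_2)}(1)>0$.

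First I would verify that $F$ and $\partial_tF$ are continuous at $(1,0,0)$. Continuity in $t$ on $(0,\infty)$ is clear from the explicit polynomial-plus-$t^{-\nu}$ form. For continuity in $(x_1,x_2)$ I would use the embeddings in Lemma \ref{prop2.2(Aro)}: if $(x_1^{(n)},x_2^{(n)})\to(0,0)$ in $W_0^{1,\mathcal{H}}(\Omega)\times W_0^{1,\mathcal{H}}(\Omega)$, then $y_i+x_i^{(n)}\to y_i$ strongly in $L^r(\Omega)$ for every $r\in[1,p^*)$. By $(H_2)$ and dominated convergence this lets me pass to the limit in the singular integral (the exponent $1-\nu<p<p^*$ gives an integrable majorant via H\"older's inequality) and in the coupling integral (the exponents $\kappa_i+1<p^*$ and $(H_3)$ provide a joint H\"older bound). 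Continuity of the $\|\cdot\|_{1,p}$ and $\|\cdot\|_{q,\eta}$ terms follows from their definitions together with Lemma \ref{prop2.2(Aro)}$(i),(iv)$.

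Next I would invoke the scalar implicit function theorem. Since $F$ is continuous near $(1,0,0)$, continuously differentiable in the variable $t$, and since $\partial_tF(1,0,0)>0$, there exist $\epsilon>0$ and a continuous map $\xi:B_\epsilon(0)\to\mathbb{R}^+$ with $\xi(0,0)=1$ and $F(\xi(x_1,x_2),x_1,x_2)=0$ for every $(x_1,x_2)\in B_\epsilon(0)$. By the characterization of $\mathcal{N}_\lambda$ via $\psi'_{(\cdot,\cdot)}(1)=0$ this exactly says $\xi(x_1,x_2)(y_1+x_1,y_2+x_2)\in\mathcal{N}_\lambda$.

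Finally, to upgrade the membership from $\mathcal{N}_\lambda$ to $\mathcal{N}_\lambda^+$, I would exploit the joint continuity of $(t,x_1,x_2)\mapsto\psi''_{(y_1+x_1,y_2+x_2)}(t)$ (established by the same embedding/dominated convergence argument used for $F$), whose value at $(1,0,0)$ is strictly positive. Shrinking $\epsilon$ if necessary, continuity together with $\xi(0,0)=1$ forces $\psi''_{(y_1+x_1,y_2+x_2)}(\xi(x_1,x_2))>0$ on $B_\epsilon(0)$, yielding $\xi(x_1,x_2)(y_1+x_1,y_2+x_2)\in\mathcal{N}_\lambda^+$. The main technical obstacle is the careful verification of continuity for the singular term $\int_\Omega a_i|y_i+x_i|^{1-\nu}dz$ at $(x_1,x_2)=(0,0)$, which rests on producing an integrable dominant via H\"older's inequality and the compactness in Lemma \ref{prop2.2(Aro)}$(iii)$.
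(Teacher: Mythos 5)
Your proposal is correct and follows essentially the same route as the paper: the paper applies the implicit function theorem to $\tilde H(x_1,x_2,t)=t^{\nu}\psi'_{(y_1+x_1,y_2+x_2)}(t)$ (the harmless factor $t^{\nu}$ does not change the zero set for $t>0$), obtains $\xi$ with $\xi(0,0)=1$, and then upgrades membership from $\mathcal{N}_\lambda$ to $\mathcal{N}_\lambda^{+}$ by continuity of $\psi''$ after shrinking $\epsilon$, exactly as you do. Your explicit verification of the continuity hypotheses via the embeddings and dominated convergence is a detail the paper leaves implicit, but the argument is the same.
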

\begin{proof}
Given $(y_1, y_2)\in \mathcal{N}_{\lambda}^{ + }$, define the functional $\tilde{H}(x_1, x_2, t):W_{0}^{1, \mathcal{H}}(\Omega)\times W_{0}^{1, \mathcal{H}}(\Omega)\times\mathbb{R}^ + \rightarrow \mathbb{R}$ as
\begin{equation*}
  \tilde{H}(x_1, x_2, t): = t^{\nu}\psi_{(y_1+ x_1, y_2 + x_2)}^{\prime}(t).
\end{equation*}
Because $(y_1+ x_1, y_2 + x_2)\in \mathcal{N}_{\lambda}^{ + }$, we have $\tilde{H}(0, 0, 1) = 0$, $\frac{\partial \tilde{H}}{\partial t}(0, 0, 1) > 0$. Using implicit function theorem to $\tilde{H}$ at $(0, 0, 1)$ (see, Berger \cite{Berger}), there exist $0 < \delta < 1$, $\epsilon > 0$ and a continuous functional $\xi:B_{\epsilon}(0)\rightarrow [1 - \delta
 , 1 + \delta]$ such that $\xi(0, 0) = 1$ and
\begin{equation}\label{xi10 = 1}
\tilde{H}(x_1, x_2, \xi(x_1, x_2)) = 0, \ \ \ \forall\ (x_1, x_2)\in B_{\epsilon}(0).
\end{equation}
 Hence we know $\xi(x_1, x_2)(y_1+ x_1, y_2 + x_2) \in\mathcal{N}_{\lambda}$ for all $\|(x_1 , x_2 )\| < \epsilon$.

Now we prove that $\xi(x_1, x_2)(y_1+ x_1, y_2 + x_2) \in\mathcal{N}_{\lambda}^{ + }$ for any $(x_1,x_2)\in B_{\epsilon}(0)$. Since
\begin{equation*}
\frac{\partial \tilde{H}}{\partial t}(x_1, x_2, t) = \nu t^{\nu - 1}\psi_{(y_1+ x_1, y_2 + x_2)}^{\prime}(t) + t^{\nu}\psi_{(y_1+ x_1, y_2 + x_2)}^{\prime\prime}(t),
\end{equation*}
and $ \xi(x_1, x_2)(y_1+ x_1, y_2 + x_2)\in\mathcal{N}_{\lambda}$, we have, for all $\| (x_1, x_2)\| < \epsilon$,
\begin{equation*}
\frac{\partial \tilde{H}}{\partial t}(x_1, x_2, \xi(x_1, x_2)) = \xi(x_1, x_2)^{\nu}\psi_{(y_1+ x_1, y_2 + x_2)}^{\prime\prime}(\xi(x_1, x_2)).
\end{equation*}
Taking into account that $\frac{\partial \tilde{H}}{\partial t}(0, 0, 1) > 0$, $\xi(0,0)=1$ and that $\xi$ and $\psi_{(y_1+ x_1, y_2 + x_2)}^{\prime\prime}(t)$ are continuous on $B_\epsilon(0)$ and on $[1-\delta,1+\delta]$, $\delta\in(0,1)$, respectively, by
diminishing $\epsilon$ if necessary, we have
\begin{equation*}
 \xi(x_1, x_2)(y_1+ x_1, y_2 + x_2)\in\mathcal{N}_{\lambda}^{ + },
\end{equation*}
for all $(x_1, x_2)\in B_{\epsilon}(0)$.
\end{proof}

\begin{lemma}\label{yinhanshun--}
Suppose the assumptions $(H_1)$, $(H_2)$, $(H_3)$ hold, then for $(y_1,y_2)\in \mathcal{N}_{\lambda}^{ - }$, there exist a small enough positive constant $\epsilon$ and a functional denoted by
\begin{equation*}
  \xi_1:B_{\epsilon}(0)\rightarrow \mathbb{R}^{ + },
\end{equation*}
which is continuous and satisfied $\xi_1(0, 0) = 1$ and for all $(x_1, x_2)\in B_{\epsilon}(0)$,
\begin{equation*}
  \xi_1(x_1, x_2)(y_1 + x_1, y_2 + x_2)\in\mathcal{N}_{\lambda}^{ - }.
\end{equation*}
\end{lemma}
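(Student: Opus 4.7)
The plan is to mirror the proof of Lemma~\ref{yinhanshun +  + } verbatim, with the single change that the strict inequality on $\psi''$ is reversed at the one point where membership in $\mathcal{N}_\lambda^{\pm}$ is identified. Concretely, I would introduce the same auxiliary functional
\[
  \tilde{H}(x_1, x_2, t) := t^{\nu}\, \psi_{(y_1+x_1,\, y_2+x_2)}^{\prime}(t)
\]
on $W_{0}^{1,\mathcal{H}}(\Omega)\times W_{0}^{1,\mathcal{H}}(\Omega)\times\mathbb{R}^{+}$ and apply the implicit function theorem of Berger at the base point $(0,0,1)$.

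First I would check the two hypotheses of the implicit function theorem. Since $(y_1,y_2)\in\mathcal{N}_\lambda^{-}\subset\mathcal{N}_\lambda$, Lemma~\ref{lemma2.3} gives $\psi_{(y_1,y_2)}^{\prime}(1)=0$, hence $\tilde{H}(0,0,1)=0$. Differentiating in $t$ yields
\[
  \frac{\partial \tilde{H}}{\partial t}(x_1,x_2,t) = \nu\, t^{\nu-1}\psi_{(y_1+x_1,\, y_2+x_2)}^{\prime}(t) + t^{\nu}\psi_{(y_1+x_1,\, y_2+x_2)}^{\prime\prime}(t),
\]
and at $(0,0,1)$ the first summand drops out, leaving $\psi_{(y_1,y_2)}^{\prime\prime}(1)$, which is \emph{strictly negative} by the very definition of $\mathcal{N}_\lambda^{-}$. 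The nonvanishing of this derivative yields $\epsilon>0$, $\delta\in(0,1)$ and a continuous functional $\xi_1:B_\epsilon(0)\to[1-\delta,1+\delta]\subset\mathbb{R}^{+}$ with $\xi_1(0,0)=1$ and $\tilde{H}(x_1,x_2,\xi_1(x_1,x_2))=0$ for every $(x_1,x_2)\in B_\epsilon(0)$. Since $\xi_1>0$, this relation is equivalent to $\psi_{(y_1+x_1,\, y_2+x_2)}^{\prime}(\xi_1(x_1,x_2))=0$, so that $\xi_1(x_1,x_2)(y_1+x_1,y_2+x_2)\in\mathcal{N}_\lambda$ by Lemma~\ref{lemma2.3}.

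The remaining step upgrades this membership to $\mathcal{N}_\lambda^{-}$. Using $\psi_{(y_1+x_1,\, y_2+x_2)}^{\prime}(\xi_1(x_1,x_2))=0$ in the formula for $\partial\tilde H/\partial t$ collapses it to
\[
  \frac{\partial \tilde{H}}{\partial t}(x_1,x_2,\xi_1(x_1,x_2)) = \xi_1(x_1,x_2)^{\nu}\,\psi_{(y_1+x_1,\, y_2+x_2)}^{\prime\prime}(\xi_1(x_1,x_2)).
\]
Joint continuity in $(x_1,x_2,t)$, together with the strict sign $\psi_{(y_1,y_2)}^{\prime\prime}(1)<0$ at the base point, allows one to shrink $\epsilon$ if necessary so that this quantity stays strictly negative on $B_\epsilon(0)$. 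Hence $\psi_{(y_1+x_1,\, y_2+x_2)}^{\prime\prime}(\xi_1(x_1,x_2))<0$, which by Lemma~\ref{lemma2.3} gives $\xi_1(x_1,x_2)(y_1+x_1,y_2+x_2)\in\mathcal{N}_\lambda^{-}$ as required. The main obstacle is therefore not analytic but purely bookkeeping: verifying enough smoothness of $\tilde H$ on the product Banach space to justify the Berger implicit function theorem, and transporting the strict negativity of $\psi_{(y_1,y_2)}^{\prime\prime}(1)$ to a full neighbourhood; both are standard given the regularity of the norms, seminorms and integral terms that build $\psi_{(y_1,y_2)}$ and its derivatives, together with the embeddings recorded in Lemma~\ref{prop2.2(Aro)} under $(H_1)$, $(H_2)$, $(H_3)$.
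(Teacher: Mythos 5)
Your proposal is correct and is essentially the paper's own argument: the paper proves this lemma by simply citing "the similar proof process of Lemma \ref{yinhanshun +  + }," and your write-up spells out exactly that adaptation, with the sign of $\psi_{(y_1,y_2)}^{\prime\prime}(1)$ reversed at the implicit-function-theorem step and again when upgrading membership from $\mathcal{N}_{\lambda}$ to $\mathcal{N}_{\lambda}^{-}$ by continuity. No gaps beyond those already present in the paper's version.
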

\begin{proof}
  Combining with Lemma \ref{N--} and the similarly proof process of Lemma \ref{yinhanshun +  + }, we can end this proof.
\end{proof}

\begin{lemma}\label{juv < jxithuv}
Suppose the assumptions $(H_1)$, $(H_2)$, $(H_3)$ hold, and $\lambda \in (0, \lambda^*)$, then there exists $\delta^*\in \mathbb{R}^{ + }$ such that for all $(h, w)\in W_{0}^{1, \mathcal{H}}(\Omega)\times W_{0}^{1, \mathcal{H}}(\Omega) $ with $t\in [0, \delta^*]$, $(th,tw)\in B_\epsilon(0)$,
\begin{equation}\label{andianjie}
  J(\xi_1(th, tw)({y_1}_*, {y_2}_*))\leq J(\xi_1(th, tw)({y_1}_* + th, {y_2}_* + tw)).
\end{equation}
\end{lemma}
\begin{proof}
Given $(h, w)\in W_{0}^{1, \mathcal{H}}(\Omega)\times W_{0}^{1, \mathcal{H}}(\Omega) $, define a function $f_{(h, w)}(t):\mathbb{R}^{ + }\rightarrow \mathbb{R}$ by
\begin{equation*}
 \begin{split}
   f_{(h, w)}(t): = &(p - 1) \|({y_1}_* + th, {y_2}_* + tw)\|_{1,p} + (q - 1)\|(\nabla {y_1}_* + t\nabla h,\nabla {y_2}_* + t\nabla w)\|_{q,\eta}\\
   & + \nu\int_{\Omega}\left[a_1\vert {y_1}_* + th \vert^{1 - \nu} + a_2\vert {y_2}_* + tw \vert^{1 - \nu}\right]dz\\
   & - \lambda(\kappa_1 + \kappa_2 + 2)(\kappa_1 + \kappa_2 + 1) \int_{\Omega}\vert {y_1}_* + th\vert^{\kappa_1 + 1}\vert {y_2}_* + tw\vert^{\kappa_2 + 1}dz,
 \end{split}
\end{equation*}
where $({y_1}_*, {y_2}_*)$ given by Lemma \ref{N--}.

Since $({y_1}_*, {y_2}_*)\in \mathcal{N}_{\lambda}^{ - }$, one has
\begin{equation*}
  f_{(h, w)}(0) = \psi_{({y_1}_*, {y_2}_*)}^{\prime\prime}(1) < 0.
\end{equation*}
By the continuity of the function $f_{(h, w)}(t)$, it can be obtained that there exists $ \delta_* > 0$ such that
\begin{equation*}
  \psi_{({y_1}_* + th, {y_2}_* + tw)}^{\prime\prime}(1) = f_{(h, w)}(t) < 0, \ \ \forall\  t\in[0, \delta_*].
\end{equation*}
From Lemma \ref{yinhanshun--}, for $({y_1}_*, {y_2}_*)\in\mathcal{N}_{\lambda}^{ - }$, we can find $ \epsilon>0$, $0 < \delta^* < \delta_*$ and a continuous functional
\begin{equation*}
\xi_1:B_{\epsilon}(0)\rightarrow (0,  + \infty),
\end{equation*}
such that for all $t\in [0, \delta^*]$, $(th, tw)\in B_{\epsilon}(0)$ and $\xi_1(th, tw)({y_1}_* + th, {y_2}_* + tw)\in\mathcal{N}_{\lambda}^{ - }$ with
\begin{equation*}
\lim_{ t\rightarrow 0 ^{ + }}  \xi_1(th, tw)=1.
\end{equation*}
Thus, for $t\in [0, \delta^*]$ with $(th,tw)\in B_{\epsilon}(0)$, one has
\begin{equation*}
  \psi_{({y_1}_* + th, {y_2}_* + tw)}^{\prime\prime}(1) < 0, \ \ \ \psi_{({y_1}_* + th, {y_2}_* + tw)}(\xi_1(th, tw))\geq \psi_{({y_1}_* + th, {y_2}_* + tw)}(1).
\end{equation*}
Hence, let $t\in[0, \delta^*]$, we have
\begin{equation}\label{Ju*v* - J1u*th}
\begin{split}
\psi_{({y_1}_*, {y_2}_*)}&(\xi_1(th, tw))\leq \psi_{({y_1}_*, {y_2}_*)}(1) = J({y_1}_*, {y_2}_*)\\
    & = \inf_{\mathcal{N}_{\lambda}^{ - }}J(y_1, y_2) \leq J(\xi_1(th, tw)({y_1}_* + th, {y_2}_* + tw)).\\
\end{split}
\end{equation}
The proof is complete.
\end{proof}

\begin{remark}\label{reandianjie}
    It is worth mentioning that $J(y_{1*}, y_{2*})$ is not locally minimal see \eqref{andianjie}, but because of $(y_{1*}, y_{2*})\in\mathcal{N}_{\lambda}^{ - }$, $J(y_{1*}, y_{2*})$ is the smallest in cross-section. Therefore, we consider it to have the structure of a saddle point solution.
\end{remark}

\begin{lemma}\label{juv < juxiswv}
Suppose the assumptions $(H_1)$, $(H_2)$ hold, and $\lambda \in (0, \lambda^*)$, then there exists $\delta^*\in \mathbb{R}^{ + }$ such that for all $(h, w)\in W_{0}^{1, \mathcal{H}}(\Omega)\times W_{0}^{1, \mathcal{H}}(\Omega) $ with $t\in [0, \delta^*]$, $(th,tw)\in B_{\epsilon}(0)$,
\begin{equation*}
  J({y_1}^*, {y_2}^*)\leq J({y_1}^* + th, {y_2}^* + tw).
\end{equation*}
\end{lemma}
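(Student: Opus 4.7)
The plan is to combine the implicit-function-type result of Lemma \ref{yinhanshun +  + } with the global minimality of $({y_1}^*, {y_2}^*)$ on $\mathcal{N}_\lambda^+$ provided by Lemma \ref{n +  + }, and to exploit the strict local convexity of the fibering map at $t=1$ guaranteed by $({y_1}^*, {y_2}^*)\in\mathcal{N}_\lambda^+$. Fix $(h,w)\in W_0^{1,\mathcal{H}}(\Omega)\times W_0^{1,\mathcal{H}}(\Omega)$. Lemma \ref{yinhanshun +  + } yields $\epsilon>0$ and a continuous map $\xi:B_\epsilon(0)\to\mathbb{R}^+$ with $\xi(0,0)=1$ and $\xi(x_1,x_2)({y_1}^*+x_1,{y_2}^*+x_2)\in\mathcal{N}_\lambda^+$ for every $(x_1,x_2)\in B_\epsilon(0)$. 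Choosing $\delta^*>0$ small enough that $(th,tw)\in B_\epsilon(0)$ for every $t\in[0,\delta^*]$ and applying Lemma \ref{n +  + } to the admissible point $\xi(th,tw)({y_1}^*+th,{y_2}^*+tw)\in\mathcal{N}_\lambda^+$,
\begin{equation*}
J({y_1}^*, {y_2}^*)\le J\bigl(\xi(th,tw)({y_1}^*+th,{y_2}^*+tw)\bigr)=\psi_{({y_1}^*+th,{y_2}^*+tw)}(\xi(th,tw)).
\end{equation*}

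The remaining and essential step is to prove, after further shrinking $\delta^*$, that
\begin{equation*}
\psi_{({y_1}^*+th,{y_2}^*+tw)}(\xi(th,tw))\le\psi_{({y_1}^*+th,{y_2}^*+tw)}(1)=J({y_1}^*+th,{y_2}^*+tw).
\end{equation*}
Since $({y_1}^*,{y_2}^*)\in\mathcal{N}_\lambda^+$, formula \eqref{2partialpsit} gives $\psi''_{({y_1}^*,{y_2}^*)}(1)>0$, so by continuity of $\psi''_{({y_1}^*,{y_2}^*)}(\cdot)$ in the scalar variable there exist $\sigma,c>0$ such that $\psi''_{({y_1}^*,{y_2}^*)}(s)>c$ for every $s\in[1-\sigma,1+\sigma]$. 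Each term in \eqref{2partialpsit} depends continuously on $(y_1,y_2)$ under norm convergence in $W_0^{1,\mathcal{H}}(\Omega)\times W_0^{1,\mathcal{H}}(\Omega)$ thanks to Lemma \ref{prop2.2(Aro)}, the singular term $\int a_i|y_i|^{1-\nu}dz$ and the coupling term $\int|y_1|^{\kappa_1+1}|y_2|^{\kappa_2+1}dz$ being controlled via the embeddings into $L^r$ for $r<p^*$. Hence $\psi''_{({y_1}^*+th,{y_2}^*+tw)}(s)\to\psi''_{({y_1}^*,{y_2}^*)}(s)$ uniformly on $[1-\sigma,1+\sigma]$ as $t\to0^+$; together with the continuity of $\xi$ at $(0,0)$, we may shrink $\delta^*$ so that for all $t\in[0,\delta^*]$ one has $\psi''_{({y_1}^*+th,{y_2}^*+tw)}>0$ throughout $[1-\sigma,1+\sigma]$ and $\xi(th,tw)\in[1-\sigma,1+\sigma]$. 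Then $\psi_{({y_1}^*+th,{y_2}^*+tw)}$ is strictly convex on $[1-\sigma,1+\sigma]$, and since $\xi(th,tw)$ is a critical point on that interval (the point $\xi(th,tw)({y_1}^*+th,{y_2}^*+tw)$ belonging to $\mathcal{N}_\lambda$), it is the unique minimizer there, which yields the displayed inequality.

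The main obstacle I foresee is guaranteeing the uniform control of the convexity window: a common interval containing both $1$ and $\xi(th,tw)$ on which $\psi''$ stays positive for all small $t$. This is precisely what the combined continuity of $\xi$ at the origin, inherited from the implicit function construction, and of the nonlocal terms in $\psi''$ under norm convergence (via the compact embedding $W_0^{1,\mathcal{H}}\hookrightarrow\hookrightarrow L^r(\Omega)$ for $r\in[1,p^*)$) provides. Concatenating the two displayed inequalities then delivers $J({y_1}^*,{y_2}^*)\le J({y_1}^*+th,{y_2}^*+tw)$ for every $t\in[0,\delta^*]$, completing the proof.
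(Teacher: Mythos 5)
Your proposal is correct and follows the same skeleton as the paper's (very terse) proof: project $({y_1}^*+th,{y_2}^*+tw)$ back onto $\mathcal{N}_\lambda^+$ via the map $\xi$ from Lemma \ref{yinhanshun +  + }, invoke $J({y_1}^*,{y_2}^*)=\inf_{\mathcal{N}_\lambda^+}J$ from Lemma \ref{n +  + } to get the first inequality, and then establish $\psi_{({y_1}^*+th,{y_2}^*+tw)}(\xi(th,tw))\leq\psi_{({y_1}^*+th,{y_2}^*+tw)}(1)$. Where you differ is in how that last comparison is justified. The paper merely asserts it ``by the similar proof process of Lemma \ref{juv < jxithuv}'', i.e.\ it shows $\psi''_{({y_1}^*+th,{y_2}^*+tw)}(1)>0$ for small $t$ by continuity of the analogue of $f_{(h,w)}$ and then implicitly leans on the global fibering-map geometry of Lemmas \ref{t1t2}--\ref{t3} (the $\mathcal{N}_\lambda^+$ projection parameter $t_1$ minimizes $\psi$ on $(0,t_2)$, and $1<t_2$ for small $t$). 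You instead build a fixed convexity window $[1-\sigma,1+\sigma]$ on which $\psi''_{({y_1}^*+th,{y_2}^*+tw)}>0$ uniformly for small $t$, trap both $1$ and the critical point $\xi(th,tw)$ inside it, and conclude by uniqueness of the minimizer of a strictly convex function with an interior critical point. This is a genuinely more self-contained justification of the key step: it avoids having to track the second root $t_2$ of the fibering equation and its dependence on $t$, at the cost of the (routine, and correctly argued) uniform convergence of the coefficients of $\psi''$ under norm perturbation. One shared imprecision worth flagging: both your $\delta^*$ and the paper's depend on $(h,w)$ (through the implicit-function radius $\epsilon$ and the continuity windows), whereas the statement's quantifier order suggests a uniform $\delta^*$; since the paper has the identical issue, this is not a defect of your argument relative to the intended proof.
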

\begin{proof}
Combining with Lemma \ref{n +  + }, \ref{yinhanshun +  + } and the similarly proof process of Lemma \ref{juv < jxithuv} or Proposition 3.5 in \cite{CPW}, one has for $t\in [0, \delta^*]$,
\begin{equation*}
  \psi_{({y_1}^* + th, {y_2}^* + tw)}^{\prime\prime}(1) > 0, \ \ \ \psi_{({y_1}^* + th, {y_2}^* + tw)}(\xi_1(th, tw))\leq \psi_{({y_1}^* + th, {y_2}^* + tw)}(1).
\end{equation*}
Hence, let $t\in[0, \delta^*]$, we have
\begin{equation*}
\begin{split}
 \psi_{({y_1}^*, {y_2}^*)}(1)& = J({y_1}^*, {y_2}^*)= \inf_{\mathcal{N}_{\lambda}^{ + }}J(y_1, y_2)\leq J(\xi_1(th, tw)({y_1}^* + th, {y_2}^* + tw))\\
  &=\psi_{({y_1}^* + th, {y_2}^* + tw)}(\xi_1(th, tw))\leq \psi_{({y_1}^* + th, {y_2}^* + tw)}(1)=J({y_1}^* + th, {y_2}^* + tw).\\
\end{split}
\end{equation*}
The proof is complete.
\end{proof}

\begin{theorem}\label{n--solution}
Suppose the assumptions $(H_1)$, $(H_2)$, $(H_{3})$ hold, $\lambda\in (0, \lambda^*), $ then $({y_1}_*, {y_2}_*)$ is a positive weak solution of problem $\eqref{eq}$ such that $J({y_1}_*, {y_2}_*)\geq 0$.
\end{theorem}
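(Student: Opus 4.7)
The energy bound $J({y_1}_*,{y_2}_*)\geq 0$ is essentially for free: since $\lambda^{*}\leq \lambda_1$ and $({y_1}_*,{y_2}_*)\in \mathcal{N}_\lambda^{-}$, Lemma \ref{lem3.4} yields the strict inequality $J({y_1}_*,{y_2}_*)>0$. The nonnegativity ${y_1}_*,{y_2}_*\geq 0$ a.e. is already recorded at the end of the proof of Lemma \ref{N--}. What remains is to show that $({y_1}_*,{y_2}_*)$ is a weak solution of \eqref{eq} and is in fact strictly positive.

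\textbf{Variational inequality for nonnegative directions.} Fix $(h,w)\in W_0^{1,\mathcal{H}}(\Omega)\times W_0^{1,\mathcal{H}}(\Omega)$ with $h,w\geq 0$. For $t>0$ small, Lemma \ref{yinhanshun--} supplies a continuous scalar $\xi_1(th,tw)\in\mathbb{R}^{+}$ with $\xi_1(0,0)=1$ such that $\xi_1(th,tw)({y_1}_*+th,{y_2}_*+tw)\in\mathcal{N}_\lambda^{-}$. Chaining Lemma \ref{juv < jxithuv} with the minimality of $J$ on $\mathcal{N}_\lambda^{-}$ gives
\begin{equation*}
J\bigl(\xi_1(th,tw)({y_1}_*+th,{y_2}_*+tw)\bigr)-J({y_1}_*,{y_2}_*)\geq 0.
\end{equation*}
I divide by $t$ and pass to the limit $t\to 0^{+}$. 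The Musielak--Orlicz pieces and the coupled term $|y_1|^{\kappa_1+1}|y_2|^{\kappa_2+1}$ in $J$ are handled by the usual chain rule together with dominated convergence. The singular piece is the delicate one: since $s\mapsto s^{1-\nu}$ is concave, the difference quotient $t^{-1}\bigl(({y_i}_*+t\varphi)^{1-\nu}-{y_i}_*^{1-\nu}\bigr)$ is nonincreasing in $t>0$ and converges pointwise to $(1-\nu){y_i}_*^{-\nu}\varphi$, so monotone convergence applies (with $+\infty$ admissible on the right). Because $({y_1}_*,{y_2}_*)\in \mathcal{N}_\lambda$, the term $(\xi_1-1)/t$ contributes $\langle J^{\prime}({y_1}_*,{y_2}_*),({y_1}_*,{y_2}_*)\rangle=0$ and drops out, leaving
\begin{equation*}
\langle J^{\prime}({y_1}_*,{y_2}_*),(h,w)\rangle\geq 0
\end{equation*}
for every nonnegative $(h,w)$, where $\int a_i{y_i}_*^{-\nu}(\cdot)\,dz$ is read as an element of $[0,+\infty]$.

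\textbf{Strict positivity and passage to equality.} The left side above is finite, so the singular integrals must be finite for every nonnegative test pair. Choosing $(h,w)=(\varphi,0)$ and $(0,\varphi)$ with arbitrary nonnegative $\varphi\in C_c^{\infty}(\Omega)$ then forces $\lvert\{{y_i}_*=0\}\rvert=0$; otherwise $\varphi$ concentrated on $\{{y_i}_*=0\}$, together with $a_i>0$ from $(H_2)$, would yield $\int a_i{y_i}_*^{-\nu}\varphi\,dz=+\infty$ and contradict the inequality. Hence ${y_1}_*,{y_2}_*>0$ a.e. in $\Omega$. With strict positivity, the singular integrands are dominated on compact subsets of $\Omega$, and I repeat the $\xi_1$-deformation with $(-h,-w)$ in place of $(h,w)$ (admissible because ${y_i}_*-th\geq 0$ on compact subsets for $t$ small) to obtain the reverse inequality. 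Decomposing a general test function into its positive and negative parts and combining the two directions delivers the weak formulation \eqref{J^'uvhw}, so $({y_1}_*,{y_2}_*)$ is a positive weak solution of \eqref{eq}.

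\textbf{Main obstacle.} The decisive point is the coexistence of the singular term with the coupled nonlinearity. Ordinary differentiation under the integral sign breaks down on $\{{y_i}_*=0\}$, and the first (one-sided) inequality only survives because concavity of $s\mapsto s^{1-\nu}$ lets monotone convergence take over. Once strict positivity has been extracted, the symmetric direction is routine, but to produce it one needs positivity of \emph{both} components simultaneously, since the coupled singular system does not decouple into two scalar problems; this is why the positivity step must be carried out jointly and why the mixed-term structure $|y_1|^{\kappa_1+1}|y_2|^{\kappa_2+1}$ forces the whole argument to treat $(y_1,y_2)$ as a single object rather than component by component.
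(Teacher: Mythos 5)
Your architecture largely tracks the paper's: both arguments feed nonnegative directions $(h,w)$ into the $\xi_1$-deformation of Lemma \ref{juv < jxithuv}, extract $a_i{y_i}_*^{-\nu}h\in L^1(\Omega)$ together with strict positivity, and then try to upgrade the resulting one-sided inequality to the full weak formulation. Your use of monotone convergence on the concave difference quotients is an acceptable substitute for the paper's Fatou argument, and deriving positivity \emph{from} finiteness of the singular integral (rather than, as the paper does, from the blow-up of the term $t^{-\nu}\int_{\mathcal{H}_1}a_1h^{1-\nu}dz$) is a legitimate reordering. One imprecision: the claim that the $(\xi_1(th,tw)-1)/t$ contribution ``drops out'' because $\big<J^{\prime}({y_1}_*,{y_2}_*),({y_1}_*,{y_2}_*)\big>=0$ is not justified as stated, since $\xi_1$ is only continuous and you have no rate for $\xi_1(th,tw)-1$. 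It is repairable: because $({y_1}_*,{y_2}_*)\in\mathcal{N}_{\lambda}^{-}$ and $\int_{\Omega}\vert{y_1}_*\vert^{\kappa_1+1}\vert{y_2}_*\vert^{\kappa_2+1}dz>0$, the fibering map $\psi_{({y_1}_*,{y_2}_*)}$ attains its global maximum at $t=1$ (end of the proof of Lemma \ref{t1t2}), so $J(\xi_1(th,tw)({y_1}_*,{y_2}_*))\leq J({y_1}_*,{y_2}_*)$ and the offending term has a favorable sign; this is exactly how the paper exploits \eqref{Ju*v* - J1u*th}.

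The genuine gap is in your final step, the passage from $\big<J^{\prime}({y_1}_*,{y_2}_*),(h,w)\big>\geq 0$ for $h,w\geq0$ to equality for arbitrary test functions. You propose to rerun the deformation with $(-h,-w)$, on the grounds that ${y_i}_*>0$ a.e.\ implies ${y_i}_*-th\geq0$ on compact subsets for $t$ small. That implication is false: almost-everywhere positivity of a $W_{0}^{1,\mathcal{H}}$ function gives no positive essential infimum on compact sets (consider a function behaving like $\vert z-z_0\vert$ near an interior point), the paper establishes no Harnack inequality or strong maximum principle that would supply such a lower bound, and $h$ is an arbitrary, possibly unbounded, Sobolev function in any case. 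So the ``reverse inequality'' is not obtained and the weak formulation is not reached. The paper closes this step by the standard truncation device instead: insert $({y_1}_*+th_*)_{+}$ and $({y_2}_*+tw_*)_{+}$ into the one-sided inequalities \eqref{zuizhongu}--\eqref{zuizhongv}, subtract the Nehari identity $\big<J^{\prime}({y_1}_*,{y_2}_*),({y_1}_*,{y_2}_*)\big>=0$, divide by $t$, and let $t\rightarrow0$ using that the measures of $\{{y_1}_*+th_*<0\}$ and $\{{y_2}_*+tw_*<0\}$ tend to zero (displays \eqref{u*hv*wn - } and \eqref{u*hv*wn - 2}). Your proof needs this (or an equivalent) argument; as written, the decisive step does not close.
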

\begin{proof}

Firstly, we prove that ${y_1}_*, {y_2}_* > 0, ~a.e. ~~z\in \Omega$.

By Lemma 3.8, we have ${y_1}_*, {y_2}_*\geq 0, ~ a.e. ~~z\in \Omega$. Suppose that there exists a set $\mathcal{H}_1\subset \Omega$ such that ${y_1}_* = 0$ for $z\in\mathcal{H}_1$ and $ meas\ \mathcal{H}_1 > 0$ or set $\mathcal{H}_2\subset \Omega$ such that ${y_2}_* = 0$ for $z\in\mathcal{H}_2$ and $ meas\ \mathcal{H}_2 > 0$, ($meas$ stands for the measure). Let $(h, w)\in W_{0}^{1, \mathcal{H}}(\Omega)\times W_{0}^{1, \mathcal{H}}(\Omega) $ with $h\geq0, w\geq0$, and $0 < t < \delta^*$. By the definition of the functional $J(y_1, y_2)$, we have
\begin{equation*}
 \begin{split}
  \frac{1}{t}\big[&J\left(\xi_1(th, tw)({y_1}_* + th, {y_2}_* + tw)\right) - J\left(\xi_1(th, tw)({y_1}_*, {y_2}_*)\right)\big]\\
 = & \frac{\xi_1(th, tw)^p}{pt} \left[ \|({y_1}_* + th, {y_2}_* + tw)\|_{1,p} - \| ({y_1}_*, {y_2}_*)\|_{1,p}\right]\\
& + \frac{\xi_1(th, tw)^{q}}{qt}\int_{\Omega}\eta\left[\vert \nabla ({y_1}_* + th)\vert^{q}dz  - \vert \nabla {y_1}_*\vert^{q}\right]dz \\
& + \frac{\xi_1(th, tw)^{q}}{qt}\int_{\Omega}\eta\left[\vert \nabla ({y_2}_* + th)\vert^{q}dz  - \vert \nabla {y_2}_*\vert^{q}\right]dz\\
& - \frac{\xi_1(th, tw)^{1 - \nu}t^{ - \nu}}{1 - \nu}\int_{\mathcal{H}_1}a_1h^{1 - \nu}dz \\
& - \frac{\xi_1(th, tw)^{1 - \nu}}{(1 - \nu)t}\int_{\Omega\backslash \mathcal{H}_1}a_1\left[({y_1}_* + th)^{1 - \nu} - {y_1}_*^{1 - \nu}\right]dz \\
    & - \frac{\xi_1(th, tw)^{1 - \nu}}{(1 - \nu)t}\int_{\Omega}a_2\left[({y_2}_* + tw)^{1 - \nu} - {y_2}_*^{1 - \nu}\right]dz \\
    & - \frac{\lambda\xi_1(th, tw)^{\kappa_1 + \kappa_2 + 2}}{t}\int_{\Omega}\left[\vert {y_1}_* + th\vert^{\kappa_1 + 1} \vert {y_2}_* + tw\vert^{\kappa_2 + 1} - \vert {y_1}_*\vert^{\kappa_1 + 1}\vert {y_2}_*\vert^{\kappa_2 + 1}\right]dz.
  \end{split}
\end{equation*}
Thus, as $t\rightarrow 0$, by using the L'h\^{o}spital's rule and the fact $0 < \nu < 1$, we have
\begin{equation*}
  \frac{1}{t}\big[J\left(\xi_1(th, tw)({y_1}_* + th, {y_2}_* + tw)\right) - J\left(\xi_1(th, tw)({y_1}_*, {y_2}_*)\right)\big]\rightarrow  - \infty.
\end{equation*}
This is a contradiction to Lemma \ref{juv < jxithuv}. Hence ${y_1}_* > 0$ $a.e.$ $z\in \Omega$. Similarly we have ${y_2}_* > 0 ~a.e.~ z\in\Omega$.

Secondly. we prove that for $(h, w)\in W_{0}^{1, \mathcal{H}}(\Omega)\times W_{0}^{1, \mathcal{H}}(\Omega)$ and $h\geq0$, $w\geq 0$, then
\begin{equation*}
   \left(a_1{y_1}_*^{ - \nu}h, a_2{y_2}_*^{ - \nu}w\right)\in L^{1}(\Omega)\times L^{1}(\Omega), \ \  %\forall\  (h, w)\in W_{0}^{1, \mathcal{H}}(\Omega)\times W_{0}^{1, \mathcal{H}}(\Omega);
\end{equation*}

\begin{equation}\label{zuizhongu}
         \begin{split}
             \int_{\Omega}\vert \nabla {y_1}_*\vert^{p - 2}\nabla {y_1}_*\cdot \nabla hdz &+ \int_{\Omega}\eta\vert \nabla {y_1}_*\vert^{q - 2}\nabla {y_1}_*\cdot \nabla hdz\\
\geq&\int_{\Omega}a_1{y_1}_*^{ - \nu}hdz + \lambda(\kappa_1 + 1)\int_{\Omega}\vert {y_1}_*\vert^{\kappa_1}\vert {y_2}_*\vert^{\kappa_2 + 1} h dz,
        \end{split}
\end{equation}
\begin{equation}\label{zuizhongv}
     \begin{split}
          \int_{\Omega}\vert \nabla {y_2}_*\vert^{p - 2}\nabla {y_2}_*\cdot \nabla wdz &+ \int_{\Omega}\eta\vert \nabla {y_2}_*\vert^{q - 2}\nabla {y_2}_*\cdot \nabla wdz\\
\geq& \int_{\Omega}a_2{y_2}_*^{ - \nu}wdz + \lambda(\kappa_2 + 1)\int_{\Omega}\vert {y_1}_*\vert^{\kappa_1 + 1}\vert {y_2}_*\vert^{\kappa_2} w dz.
     \end{split}
\end{equation}

Given $0\leq h,w\in W_{0}^{1, \mathcal{H}}(\Omega)$, choosing $\{t_n\}\in(0, 1]$ as a decreasing sequence such that $\lim_{n\rightarrow \infty}t_n = 0$. We have that, for $n\in \mathbb{N}$, the function
\begin{equation*}
      u_n(z) = a_1\frac{[{y_1}_*(z) + t_nh(z)]^{1 - \nu} - {y_1}_*(z)^{1 - \nu}}{t_n}
\end{equation*}
is measurable and nonnegative, and for $a.e.\ z\in\Omega$,
\begin{equation*}
   \lim_{n\rightarrow\infty} u_n(z) = (1 - \nu)a_1{y_1}_*(z)^{ - \nu}h(z).
\end{equation*}
Thus
\begin{equation}\label{fatou}
   \int_{\Omega}a_1{y_1}_*(z)^{ - \nu}h(z)dz\leq\frac{1}{1 - \nu}\liminf_{n\rightarrow \infty}\int_{\Omega} u_n(z)dz,
\end{equation}
here, the Fatou's lemma is used.

Applying again Lemma \ref{juv < jxithuv} and letting $w=0$, one has for $n\in \mathbb{N}$ sufficiently large, there is
\begin{equation*}
    \begin{split}
        0\leq & \frac{J(\xi_1(t_nh, 0)({y_1}_* + t_nh, {y_2}_*)) - J(\xi_1(t_nh, 0)({y_1}_*, {y_2}_*))}{t_n}\\
            = & \frac{\xi_1(t_nh, 0)^p}{p}\frac{(\|( {y_1}_* + t_nh)\|_{1, p}^p - \| {y_1}_*\|_{1, p}^p)}{t_n} -  \frac{\xi_1(t_nh, 0)^{\nu}}{1 - \nu}\int_{\Omega}u_ndz \\ & + \frac{\xi_1(t_nh, 0)^{q}}{q}\frac{\int_{\Omega}\eta\vert \nabla ( {y_1}_* + t_nh)\vert^{q}dz -  \int_{\Omega}\eta\vert \nabla {y_1}_*\vert^{q}dz}{t_n} \\
    & - \lambda\xi_1(t_nh, 0)^{\kappa_1 + \kappa_2 + 2}\frac{\int_{\Omega}\vert  {y_1}_* + t_nh\vert^{\kappa_1 + 1} \vert {y_2}_*\vert^{\kappa_2 + 1}dz - \int_{\Omega}\vert  {y_1}_*\vert^{\kappa_1 + 1} \vert {y_2}_*\vert^{\kappa_2 + 1}dz}{t_n}.
    \end{split}
\end{equation*}
Thus
\begin{equation}\label{3.34}
   \begin{split}
         \frac{\xi_1(t_nh, 0)^{\nu}}{1 - \nu}&\int_{\Omega}u_n(z)dz\\
          \leq & \frac{\xi_1(t_nh, 0)^p}{p}\frac{\|( {y_1}_* + t_nh)\|_{1, p}^p - \| {y_1}_*\|_{1, p}^p}{t_n}\\
            &  + \frac{\xi_1(t_nh, 0)^{q}}{q}\frac{\int_{\Omega}\eta\left[\vert \nabla ( {y_1}_* + t_nh)\vert^{q} - \vert \nabla {y_1}_*\vert^{q}\right]dz}{t_n} \\
    & - \lambda\xi_1(t_nh, 0)^{\kappa_1 + \kappa_2 + 2}\frac{\int_{\Omega}\vert  {y_1}_* + t_nh\vert^{\kappa_1 + 1} \vert {y_2}_*\vert^{\kappa_2 + 1} - \vert  {y_1}_*\vert^{\kappa_1 + 1} \vert {y_2}_*\vert^{\kappa_2 + 1}dz}{t_n}.
   \end{split}
\end{equation}
We take $\lim_{n\rightarrow \infty }$ in \eqref{3.34}, and using \eqref{fatou} and the fact that the limit, as $n\rightarrow \infty$, of the right side of \eqref{3.34} exists (and that it is finite), we get that $a_1{y_1}_*^{ - \nu}h\in L^{1}(\Omega)$ for any nonnegative $ h\in W_{0}^{1, \mathcal{H}}(\Omega)$ and that \eqref{zuizhongu} holds. Letting $h = 0$, a similar proof gives that $a_2{y_2}_*^{ - \nu}w\in L^{1}(\Omega)$ for any nonnegative $w\in W_{0}^{1, \mathcal{H}}(\Omega)$ and that \eqref{zuizhongv} hold.

Thirdly, we prove $({y_1}_*, {y_2}_*)$ is a weak solution of problem \eqref{eq}. Given $h_*\in W_{0}^{1, \mathcal{H}}(\Omega)$, $w_*\in W_{0}^{1, \mathcal{H}}(\Omega)$ and replace $h$, $w$ in \eqref{zuizhongu}, \eqref{zuizhongv} with $({y_1}_* + t h_*)_{ + }$, $({y_2}_* + t w_*)_{ + }$, respectively, we have
\begin{equation*}
   \begin{split}
       \int_{\Omega}&\vert \nabla {y_1}_*\vert^{p - 2}\nabla {y_1}_*\cdot \nabla ({y_1}_* + t h_*)_{ + }dz + \int_{\Omega}\eta\vert \nabla {y_1}_*\vert^{q - 2}\nabla {y_1}_*\cdot \nabla ({y_1}_* + t h_*)_{ + }dz\\
& + \int_{\Omega}\vert \nabla {y_2}_*\vert^{p - 2}\nabla {y_2}_*\cdot \nabla ({y_2}_* + t w_*)_{ + }dz + \int_{\Omega}\eta\vert \nabla {y_2}_*\vert^{q - 2}\nabla {y_2}_*\cdot \nabla ({y_2}_* + t w_*)_{ + }dz\\
& - \int_{\Omega}a_1{y_1}_*^{ - \nu}({y_1}_* + t h_*)_{ + }dz - \lambda(\kappa_1 + 1)\int_{\Omega}\vert {y_1}_*\vert^{\kappa_1}\vert {y_2}_*\vert^{\kappa_2 + 1} ({y_1}_* + t h_*)_{ + } dz\\
& - \int_{\Omega}a_2{y_2}_*^{ - \nu}({y_2}_* + t w_*)_{ + }dz - \lambda(\kappa_2 + 1)\int_{\Omega}\vert {y_1}_*\vert^{\kappa_1 + 1}\vert {y_2}_*\vert^{\kappa_2} ({y_2}_* + t w_*)_{ + }dz\geq0.
   \end{split}
\end{equation*}
Thus
\begin{equation}\label{u*hv*wn - }
   \begin{split}
\int_{\Omega}&\vert \nabla {y_1}_*\vert^{p - 2}\nabla {y_1}_* \cdot\nabla ({y_1}_* + t h_*)dz\\
& - \int_{\{{y_1}_* + t h_* < 0\}}\vert \nabla {y_1}_*\vert^{p - 2}\nabla {y_1}_*\cdot \nabla ({y_1}_* + t h_*)dz\\
       & + \int_{\Omega}\eta\vert \nabla {y_1}_*\vert^{q - 2}\nabla {y_1}_*\cdot \nabla ({y_1}_* + t h_*)dz\\
       & - \int_{\{{y_1}_* + t h_* < 0\}}\eta\vert \nabla {y_1}_*\vert^{q - 2}\nabla {y_1}_*\cdot \nabla ({y_1}_* + t h_*)dz\\
       & + \int_{\Omega}\vert \nabla {y_2}_*\vert^{p - 2}\nabla {y_2}_* \cdot\nabla ({y_2}_* + t w_*)dz \\
       &- \int_{\{{y_2}_* + t w_* < 0\}}\vert \nabla {y_2}_*\vert^{p - 2}\nabla {y_2}_*\cdot \nabla ({y_2}_* + t w_*)dz\\
       & + \int_{\Omega}\eta\vert \nabla {y_2}_*\vert^{q - 2}\nabla {y_2}_*\cdot \nabla ({y_2}_* + t w_*)dz\\
       & - \int_{\{{y_2}_* + t w_* < 0\}}\eta\vert \nabla {y_2}_*\vert^{q - 2}\nabla {y_2}_*\cdot \nabla ({y_2}_* + t w_*)dz\\
& - \int_{\Omega}a_1{y_1}_*^{ - \nu}({y_1}_* + t h_*)dz + \int_{\{{y_1}_* + t h_* < 0\}}a_1{y_1}_*^{ - \nu}({y_1}_* + t h_*)dz\\
     & - \lambda(\kappa_1 + 1)\int_{\Omega}\vert {y_1}_*\vert^{\kappa_1}\vert {y_2}_*\vert^{\kappa_2 + 1} ({y_1}_* + t h_*)dz\\
     & +  \lambda(\kappa_1 + 1)\int_{\{{y_1}_* + t h_* < 0\}}\vert {y_1}_*\vert^{\kappa_1}\vert {y_2}_*\vert^{\kappa_2 + 1} ({y_1}_* + t h_*)dz\\
    &  - \int_{\Omega}a_2{y_2}_*^{ - \nu}({y_2}_* + t w_*)dz + \int_{\{{y_2}_* + t w_* < 0\}}a_2{y_2}_*^{ - \nu}({y_2}_* + t w_*)dz\\
     & - \lambda(\kappa_2 + 1)\int_{\Omega}\vert {y_1}_*\vert^{\kappa_1 + 1}\vert {y_2}_*\vert^{\kappa_2} ({y_2}_* + t w_*)dz \\
     &+  \lambda(\kappa_2 + 1)\int_{\{{y_2}_* + t w_* < 0\}}\vert {y_1}_*\vert^{\kappa_1 + 1}\vert {y_2}_*\vert^{\kappa_2} ({y_2}_* + t w_*)dz\geq0.
   \end{split}
\end{equation}
Since $({y_1}_*, {y_2}_*)\in \mathcal{N}_{\lambda}$ and ${y_1}_*(z)>0$, $ {y_2}_*(z) > 0$ for $a.e.\ z\in\Omega$, one has
\begin{equation*}
\begin{split}
0   = & \int_{\Omega}\vert \nabla {y_1}_*\vert^pdz + \int_{\Omega}\eta\vert \nabla {y_1}_*\vert^{q}dz\\
 & - \int_{\Omega}a_1{y_1}_*^{1 - \nu}dz - \lambda(\kappa_1 + 1)\int_{\Omega}\vert {y_1}_*\vert^{\kappa_1 + 1}\vert {y_2}_*\vert^{\kappa_2 + 1} dz\\
 & +  \int_{\Omega}\vert \nabla {y_2}_*\vert^pdz + \int_{\Omega}\eta\vert \nabla {y_2}_*\vert^{q}dz\\
 & - \int_{\Omega}a_2{y_2}_*^{1 - \nu}dz - \lambda(\kappa_2 + 1)\int_{\Omega}\vert {y_1}_*\vert^{\kappa_1 + 1}\vert {y_2}_*\vert^{\kappa_2 + 1} dz.
\end{split}
\end{equation*}
Thus, \eqref{u*hv*wn - } becomes
\begin{equation}\label{u*hv*wn - 2}
\begin{split}
 t \int_{\Omega}&\vert \nabla {y_1}_*\vert^{p - 2}\nabla {y_1}_* \cdot\nabla h_*dz   -  t\int_{\{{y_1}_* + t h_* < 0\}}\vert \nabla {y_1}_*\vert^{p - 2}\nabla {y_1}_*\cdot \nabla h_*dz\\
 + &t \int_{\Omega}\eta\vert \nabla {y_1}_*\vert^{q - 2}\nabla {y_1}_*\cdot \nabla h_*dz  -  t\int_{\{{y_1}_* + t h_* < 0\}}\eta\vert \nabla {y_1}_*\vert^{q - 2}\nabla {y_1}_*\cdot \nabla h_*dz\\
 + & t \int_{\Omega}\vert \nabla {y_2}_*\vert^{p - 2}\nabla {y_2}_* \cdot \nabla w_*dz  -  t \int_{\{{y_2}_* + t w_* < 0\}}\vert \nabla {y_2}_*\vert^{p - 2}\nabla {y_2}_*\cdot \nabla w_*dz\\
 + &t \int_{\Omega}\eta\vert \nabla {y_2}_*\vert^{q - 2}\nabla {y_2}_*\cdot \nabla w_*dz  -  t\int_{\{{y_2}_* + t w_* < 0\}}\eta\vert \nabla {y_2}_*\vert^{q - 2}\nabla {y_2}_*\cdot \nabla {y_2}_*dz\\
 - &t\int_{\Omega}a_1{y_1}_*^{ - \nu}h_*dz  -  t \lambda(\kappa_1 + 1)\int_{\Omega}\vert {y_1}_*\vert^{\kappa_1}\vert {y_2}_*\vert^{\kappa_2 + 1}h_*dz\\
 - &t\int_{\Omega}a_2{y_2}_*^{ - \nu}w_*dz  -  t \lambda(\kappa_2 + 1)\int_{\Omega}\vert {y_1}_*\vert^{\kappa_1 + 1}\vert {y_2}_*\vert^{\kappa_2}w_*dz \geq 0.
\end{split}
\end{equation}
Dividing \eqref{u*hv*wn - 2} by $t$, passing to the limit $t\rightarrow 0$, we have $meas\{{y_1}_* + t h_* < 0\}\rightarrow 0$ and $meas\{{y_2}_* + t w_* < 0\}\rightarrow 0$. Thus
\begin{equation*}
  \begin{split}
              \int_{\Omega}&\vert \nabla {y_1}_*\vert^{p - 2}\nabla {y_1}_*\cdot \nabla h_*dz + \int_{\Omega}\eta\vert \nabla {y_1}_*\vert^{q - 2}\nabla {y_1}_*\cdot \nabla h_*dz\\
              & +  \int_{\Omega}\vert \nabla {y_2}_*\vert^{p - 2}\nabla {y_2}_*\cdot \nabla w_*dz + \int_{\Omega}\eta\vert \nabla {y_2}_*\vert^{q - 2}\nabla {y_2}_*\cdot \nabla w_*dz\\
     & -  \int_{\Omega}a_1{y_1}_*^{ - \nu}h_*dz + \lambda(\kappa_1 + 1)\int_{\Omega}\vert {y_1}_*\vert^{\kappa_1}\vert {y_2}_*\vert^{\kappa_2 + 1} h_* dz\\
     & - \int_{\Omega}a_2{y_2}_*^{ - \nu}w_*dz + \lambda(\kappa_2 + 1)\int_{\Omega}\vert {y_1}_*\vert^{\kappa_1 + 1}\vert {y_2}_*\vert^{\kappa_2} w_* dz \geq 0.
   \end{split}
\end{equation*}
 Since the arbitrary of $ h_*$ and $ w_*$, then the above inequality is equal to 0. Hence $({y_1}_*, {y_2}_*)$ is a positive solution of system \eqref{eq} and from Lemma \ref{lem3.4}, there holds $J({y_1}_*, {y_2}_*)> 0$.
\end{proof}

\begin{theorem}\label{maintheorem}
Suppose the assumptions $(H_1)$, $(H_2)$, $(H_3)$ hold, then there exists a positive constant $\lambda^*$ such that for all $\lambda \in (0, \lambda^*)$, system \eqref{eq} has at least two positive weak solutions $({y_1}^*, {y_2}^*), ({y_1}_*, {y_2}_*)\in W_{0}^{1, \mathcal{H}}(\Omega)\times W_{0}^{1, \mathcal{H}}(\Omega)$ such that $J({y_1}^*, {y_2}^*) < 0< J({y_1}_*, {y_2}_*)$.
\end{theorem}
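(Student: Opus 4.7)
The plan is to assemble the theorem from the results already proved. Theorem \ref{n--solution} directly supplies the first solution $({y_1}_*, {y_2}_*) \in \mathcal{N}_{\lambda}^{-}$ as a positive weak solution of \eqref{eq} with $J({y_1}_*, {y_2}_*) \geq 0$, so the only thing left is to produce the companion in $\mathcal{N}_{\lambda}^{+}$. I would take $\lambda^* := \min\{\lambda_0, \lambda_1, \tilde{\lambda}_0\}$ and, for $\lambda \in (0, \lambda^*)$, invoke Lemma \ref{n +  + } to obtain a minimizer $({y_1}^*, {y_2}^*) \in \mathcal{N}_{\lambda}^{+}$ of $J$ on $\mathcal{N}_{\lambda}^{+}$. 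Lemma \ref{jn +  +  < 0} immediately gives $J({y_1}^*, {y_2}^*) < 0$, which is the strict inequality the statement demands and which, combined with the nonnegativity of $J({y_1}_*, {y_2}_*)$, automatically distinguishes the two solutions. The remaining work is to show that this second minimizer is a weak solution of \eqref{eq}.

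For that verification I would retrace, line by line, the three-step scheme of Theorem \ref{n--solution}, simply replacing Lemma \ref{juv < jxithuv} by Lemma \ref{juv < juxiswv} throughout. The first step establishes ${y_1}^*, {y_2}^* > 0$ almost everywhere in $\Omega$: suppose by contradiction that some ${y_i}^*$ vanishes on a set of positive measure, pick nonnegative $(h, w) \in W_{0}^{1,\mathcal{H}}(\Omega) \times W_{0}^{1,\mathcal{H}}(\Omega)$, and form the difference quotient of $J$ along $({y_1}^*+th, {y_2}^*+tw)$. Because $0 < \nu < 1$, L'Hôpital's rule drives the contribution of the singular term $-\tfrac{1}{1-\nu}\int_\Omega a_i |y_i|^{1-\nu}\, dz$ to $-\infty$ on the zero set, which contradicts the one-sided lower bound $J({y_1}^*, {y_2}^*) \leq J({y_1}^*+th, {y_2}^*+tw)$ supplied by Lemma \ref{juv < juxiswv}.

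The second step derives one-sided inequalities analogous to \eqref{zuizhongu}--\eqref{zuizhongv} with ${y_i}^*$ in place of ${y_i}_*$. For nonnegative $h, w \in W_{0}^{1,\mathcal{H}}(\Omega)$ and a sequence $t_n \downarrow 0$ I would introduce
\begin{equation*}
u_n(z) = a_1 \frac{[{y_1}^*(z) + t_n h(z)]^{1-\nu} - {y_1}^*(z)^{1-\nu}}{t_n},
\end{equation*}
which is measurable, nonnegative, and converges a.e.\ to $(1-\nu) a_1 {y_1}^{*-\nu} h$. Applying Fatou's lemma and Lemma \ref{juv < juxiswv} in the direction $(h, 0)$ (and symmetrically $(0, w)$) bounds $\int_\Omega a_1 {y_1}^{*-\nu} h\, dz$ by the right-hand side of the quotient, yielding both the integrability $a_i {y_i}^{*-\nu} \cdot (\text{test function}) \in L^1(\Omega)$ and the desired one-sided weak inequality. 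In the third step I would upgrade these inequalities to equalities by inserting the truncated test functions $({y_1}^* + t h_*)_+$ and $({y_2}^* + t w_*)_+$ for arbitrary $h_*, w_* \in W_{0}^{1,\mathcal{H}}(\Omega)$, splitting each integral over $\{{y_i}^* + t\cdot < 0\}$, using $({y_1}^*, {y_2}^*) \in \mathcal{N}_{\lambda}$ to cancel the leading-order terms, dividing by $t$, and passing to $t \to 0^+$ while noting that the measures of the negative sets shrink to zero.

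The main obstacle is technical rather than conceptual: one must justify that the integrals over the shrinking truncation sets $\{{y_i}^* + t\cdot < 0\}$ vanish in the limit and that the nonlinear coupling $\int_\Omega |y_1|^{\kappa_1+1}|y_2|^{\kappa_2+1}\, dz$ behaves well under the truncation-and-quotient procedure. Both points are handled exactly as in Theorem \ref{n--solution}, since the geometry of $\mathcal{N}_{\lambda}^{+}$ near the minimum (secured by Lemma \ref{yinhanshun +  + } and Lemma \ref{juv < juxiswv}) supplies precisely the same one-sided variational information as at the $\mathcal{N}_{\lambda}^{-}$ minimum. Once the weak formulation holds, $J(|y_1|, |y_2|) = J(y_1, y_2)$ lets me assume ${y_1}^*, {y_2}^* \geq 0$, and the chain $J({y_1}^*, {y_2}^*) < 0 \leq J({y_1}_*, {y_2}_*)$ records that the two positive weak solutions are distinct, completing the argument.
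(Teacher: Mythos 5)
Your proposal is correct and follows essentially the same route as the paper: the published proof likewise takes the $\mathcal{N}_{\lambda}^{-}$ solution directly from Theorem \ref{n--solution}, reruns that theorem's three-step argument with Lemma \ref{juv < juxiswv} in place of Lemma \ref{juv < jxithuv} to show that the minimizer from Lemma \ref{n +  + } on $\mathcal{N}_{\lambda}^{+}$ is a positive weak solution, and cites Lemma \ref{jn +  +  < 0} for $J({y_1}^*, {y_2}^*) < 0$. There are no substantive differences between your argument and the paper's.
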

\begin{proof}
Proceeding as in the proof of Theorem \ref{n--solution}, but using now Lemma \ref{juv < juxiswv} instead of Lemma \ref{juv < jxithuv}, we obtain that $({y_1}^*, {y_2}^*)\in \mathcal{N}_{\lambda}^{ + }$ is a positive weak solution of the problem $\eqref{eq}$ and from Lemma \ref{jn +  +  < 0}, one has $J({y_1}^*, {y_2}^*) < 0$. Also, by Theorem \ref{n--solution}, $({y_1}_*, {y_2}_*)\in \mathcal{N}_{\lambda}^{-}$ is another positive weak solution of system \eqref{eq} and, by Lemma \ref{lem3.4}, $J({y_1}_*, {y_2}_*)> 0$. Thus the proof is complete.
\end{proof}

\vskip1cm

 Next, we generalize the results of the system \eqref{eq} to the case of $m>2$.

 Let $Y =\left[ W_{0}^{1, \mathcal{H}}(\Omega)\right]^{m}$ equipped with norm
\begin{equation*}
  \|(y_1, y_2, \cdots, y_m )\|_{Y,1} =\left( \|y_1\|_{1, p}^p + \|y_2\|_{1, p}^p + \cdots + \|y_m\|_{1, p}^p\right)^{\frac{1}{p}}.
\end{equation*}
\begin{equation*}
\begin{split}
    \|(y_1, y_2, \cdots, y_m )\|_{Y,2}= \|\nabla {y}_1\|_{\mathcal{H}} + \|\nabla {y}_2\|_{\mathcal{H}} + \cdots + \|\nabla {y}_m\|_{\mathcal{H}},
\end{split}
\end{equation*}
and
\begin{equation*}
\begin{split}
  \| (y_1, y_2, \cdots, y_m )\|_{Y,3}&=\left(\| y_1\|_{q,\eta}^{q}+\| y_2\|_{q,\eta}^{q} + \cdots + \| y_m\|_{q,\eta}^{q}\right)^{\frac{1}{q}}\\
  &=\left(\int_{\Omega} \eta\vert {y}_1\vert^{q}dz + \int_{\Omega} \eta\vert {y}_2\vert^{q}dz + \cdots +  \int_{\Omega} \eta\vert {y}_m\vert^{q}dz\right)^{\frac{1}{q}}.
  \end{split}
\end{equation*}
Let the energy functional $J:Y\rightarrow \mathbb{R}$ be defined by

\begin{align*}
   J(y_1, y_2, \cdots, y_m) =&\frac{1}{p}\|(y_1, y_2, \cdots, y_m )\|_{Y,1}^{p}  + \frac{1}{q} \| (\nabla y_1, \nabla y_2, \cdots,\nabla y_m )\|_{Y,3}^{q} \\
   &-  \sum_{i = 1}^{n} \frac{1}{1 - \nu}\int_{\Omega}a_i\vert y_i\vert^{1 - \nu}dz\\
   &- \lambda\int_{\Omega}\vert y_1\vert^{\kappa_1 + 1} \vert y_2\vert^{\kappa_2 + 1}\cdots  \vert y_m\vert^{\kappa_m + 1}dz.
\end{align*}
\begin{theorem}\label{fangchengzujie}
Suppose the assumptions $(H_{1})$, $(H_{2})$, $(H_{3})$ hold, then there exists a positive constant $\lambda^{**}$ such that for all $\lambda \in (0, \lambda^{**})$, system \eqref{eq} has at least two positive weak solutions
\begin{equation*}
 (y_1^*, y_2^*, \cdots, y_m^*), (y_{1*}, y_{2*}, \cdots, y_{m*})\in Y
\end{equation*}
 such that
\begin{equation*}
  J(y_1^*, y_2^*, \cdots, y_m^*) < 0 < J(y_{1*}, y_{2*}, \cdots, y_{m*}).
\end{equation*}
\end{theorem}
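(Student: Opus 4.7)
The plan is to follow verbatim the strategy developed for $m=2$ in Theorem \ref{maintheorem}, with the critical exponent $\kappa_1+\kappa_2+2$ replaced throughout by $K:=\sum_{i=1}^{m}\kappa_i+m$, which still satisfies $q<K<p^*$ by assumption $(H_3)$. First I would introduce the Nehari manifold
\begin{equation*}
\mathcal{N}_{\lambda}=\left\{(y_1,\ldots,y_m)\in Y\setminus\{0\}:\langle J'(y_1,\ldots,y_m),(y_1,\ldots,y_m)\rangle=0\right\},
\end{equation*}
split it into $\mathcal{N}_{\lambda}^{+}$, $\mathcal{N}_{\lambda}^{0}$, $\mathcal{N}_{\lambda}^{-}$ via the sign of $\psi_{(y_1,\ldots,y_m)}''(1)$ for the fibering map $\psi_{(y_1,\ldots,y_m)}(t)=J(ty_1,\ldots,ty_m)$, and re-establish the analogues of Lemmas \ref{bddblow}--\ref{t3}. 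The three analytic ingredients propagate: the generalized H\"{o}lder inequality with exponents $r_i\in(\kappa_i+1,p^*)$ chosen so that $\sum_{i=1}^{m}(\kappa_i+1)/r_i=1$ (possible since $\sum_{i=1}^{m}(\kappa_i+1)<p^*$ by $(H_3)$) gives $\int_{\Omega}\prod_{i=1}^{m}|y_i|^{\kappa_i+1}dz\leq C\prod_{i=1}^{m}\|y_i\|_{r_i}^{\kappa_i+1}$; the $m$-term Young inequality together with concavity of $t\mapsto t^{1/p}$ gives $\prod_{i=1}^{m}\|y_i\|_{r_i}^{(\kappa_i+1)/K}\leq C\|(y_1,\ldots,y_m)\|_{Y,1}^{1/p}$; and concavity of $t\mapsto t^{(1-\nu)/p}$ bounds $\sum_{i=1}^{m}\int_{\Omega}a_i|y_i|^{1-\nu}dz$ by $C\|(y_1,\ldots,y_m)\|_{Y,1}^{(1-\nu)/p}$. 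These yield $\lambda_0,\lambda_1,\tilde{\lambda}_0>0$ such that $\mathcal{N}_{\lambda}^{0}=\emptyset$, $J|_{\mathcal{N}_{\lambda}^{+}}<0$, $J|_{\mathcal{N}_{\lambda}^{-}}\geq 0$ and the fibering-map trichotomy producing unique $0<t_1<t_2$ with $t_j(y_1,\ldots,y_m)\in\mathcal{N}_{\lambda}^{\pm}$ whenever $\int_{\Omega}\prod_{i=1}^{m}|y_i|^{\kappa_i+1}dz>0$.

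Next, I would minimize $J$ over $\mathcal{N}_{\lambda}^{+}$ and $\mathcal{N}_{\lambda}^{-}$ separately. Coercivity of $J|_{\mathcal{N}_{\lambda}}$ follows by splitting into at most $2^m-1$ cases according to which subfamily of $\{\|\nabla y_i\|_{\mathcal{H}}\}_{i=1}^{m}$ diverges, since the same estimate $J\geq C\sum_{i}\|\nabla y_i\|_{\mathcal{H}}^{p}-C'\sum_{i}\|\nabla y_i\|_{\mathcal{H}}^{1-\nu}$ remains coercive as $p>1-\nu$. Picking a minimizing sequence $({y_i}_n)_{i=1}^{m}$, weak compactness in $Y$, the compact embedding of Lemma \ref{prop2.2(Aro)}(iii) and an iterated application of the $m$-term generalization of the result from \cite{GuoW} yield $\int_{\Omega}\prod_{i=1}^{m}|{y_i}_n|^{\kappa_i+1}dz\to\int_{\Omega}\prod_{i=1}^{m}|{y_i}_*|^{\kappa_i+1}dz$.

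The main obstacle I expect is showing that each component ${y_{i*}}\neq 0$, since in the two-index case this used Young's inequality in a form that eliminated exactly one vanishing term at a time. For general $m$, the bound analogous to \eqref{chengji} takes the form
\begin{equation*}
C_{18}\leq\lambda C_{19}\,\frac{\prod_{i=1}^{m}\|{y_i}_n\|_{r_i}^{\kappa_i+1}}{\sum_{i=1}^{m}\|{y_i}_n\|_{r_i}^{p}},
\end{equation*}
and I would rule out any proper vanishing subset $S\subsetneq\{1,\ldots,m\}$ inductively: if exactly the indices in $S$ vanish in the limit while the others remain bounded, then for $n$ large enough the denominator is controlled below by $\sum_{j\notin S}\|{y_j}_n\|_{r_j}^{p}>0$ and the inequality reduces to
\begin{equation*}
C_{18}\leq\lambda C_{19}\prod_{i\in S}\|{y_i}_n\|_{r_i}^{\kappa_i+1}\cdot\prod_{j\notin S}\|{y_j}_n\|_{r_j}^{\kappa_j+1}\Big/\sum_{j\notin S}\|{y_j}_n\|_{r_j}^{p},
\end{equation*}
whose right-hand side tends to $0$ as $n\to\infty$, a contradiction. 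Hence all ${y_{i*}}\neq 0$ and $\int_{\Omega}\prod_{i=1}^{m}|{y_i}_*|^{\kappa_i+1}dz>0$.

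Strong convergence $({y_i}_n)\to({y_i}_*)$ in $Y$ then follows by the uniform-convexity case analysis of Lemma \ref{N--}, now requiring $2^m-1$ parallel cases on the strict weak lower semicontinuity of $\rho_{\mathcal{H}}(\nabla\cdot)$, each contradicted by the fibering-map maximality on $\mathcal{N}_{\lambda}^{-}$ or minimality on $\mathcal{N}_{\lambda}^{+}$. Finally, the implicit-function-theorem argument of Lemmas \ref{yinhanshun +  + }--\ref{juv < juxiswv} adapts with test directions $(h_1,\ldots,h_m)\in Y$ in place of $(h,w)$, and the truncation/Fatou argument of Theorem \ref{n--solution} applied successively with $h_i$ nonzero and the other test functions set to zero shows that both minimizers are positive weak solutions of the $m$-equation system. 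Setting $\lambda^{**}:=\min\{\lambda_0,\lambda_1,\tilde{\lambda}_0\}$ in this $m$-component framework, together with $J|_{\mathcal{N}_{\lambda}^{+}}<0$ and $J|_{\mathcal{N}_{\lambda}^{-}}\geq 0$, completes the proof.
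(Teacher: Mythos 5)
Your proposal is correct and follows essentially the same strategy as the paper: the paper likewise reduces Theorem \ref{fangchengzujie} to rerunning the proof of Theorem \ref{maintheorem} with $\kappa_1+\kappa_2+2$ replaced by $\sum_{i=1}^{m}\kappa_i+m$, the only point it verifies explicitly being the $m$-fold analogue \eqref{u1un < alpha1n} of \eqref{uvualpha + 1vbeta + 1}. The sole (minor) difference is that the paper obtains that inequality by recursion on $m$, peeling off one factor at a time with Young's inequality, whereas you prove it in one step via an $m$-term H\"{o}lder inequality with the admissible exponents $r_i$ and an $m$-term Young inequality; your additional case analysis ruling out vanishing of any proper subset of components is a detail the paper leaves implicit, and it is sound.
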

\begin{proof}
By checking the proof of Theorem \ref{maintheorem}, we only need to generalize \eqref{uvualpha + 1vbeta + 1} of Lemma \ref{3.2} to the following inequality,
\begin{equation}\label{u1un < alpha1n}
  \left(\sum_{i = 1}^{m}\kappa_i + m\right)\int_{\Omega}\prod_{i = 1}^{m}\vert y_i\vert^{\kappa_i + 1}dz\leq  C_{21}\| (y_1, y_2, \cdots, y_m) \|_{Y,1}^{\sum_{i = 1}^{m}\kappa_i + m}.
\end{equation}
In fact, from assumption $(H_6)$ and (4.1)$\mbox{-}$(4.4) of Lemma 4.1 in \cite{RasoC}, one has
\begin{equation}\label{nu1u2u3un}
    \begin{split}
    \left(\sum_{i = 1}^{n}\kappa_i + m\right)\int_{\Omega}\prod_{i = 1}^{m}\vert y_i\vert^{\kappa_i + 1}dz\leq &C_{22} \prod_{i = 1}^{m} \|y_i\|_{1, p}^{\kappa_i + 1}.
    \end{split}
\end{equation}
For case of $i = 3$. Combining with \eqref{uvualpha + 1vbeta + 1}, there is
\begin{equation}\label{u1u2u3 < alpha123}
 \begin{split}
\left(\|y_1\|_{1, p}^{\kappa_1 + 1}\|y_2\|_{1, p}^{\kappa_2 + 1} \|y_{3}\|_{1, p}^{\kappa_{3} + 1}\right)&^{\frac{1}{\kappa_1 + \kappa_2 + \kappa_{3} + 3}} \\
\leq C_{23}(\|y_1\|_{1, p}^p& + \|y_2\|_{1, p}^p)^{\frac{\kappa_1 + \kappa_2 + 2}{p(\kappa_1 + \kappa_2 + \kappa_{3} + 3)}}\|y_{3}\|_{1, p}^{\frac{\kappa_{3} + 1}{\kappa_1 + \kappa_2 + \kappa_{3} + 3}}.
 \end{split}
\end{equation}
By Young's inequality and the properties of concave function $t\rightarrow t^{1/p}$, one has
\begin{equation*}
\begin{split}
(\|y_1\|_{1, p}^p + &\|y_2\|_{1, p}^p)^{\frac{\kappa_1 + \kappa_2 + 2}{p(\kappa_1 + \kappa_2 + \kappa_{3} + 3)}}\|y_{3}\|_{1, p}^{\frac{\kappa_{3} + 1}{\kappa_1 + \kappa_2 + \kappa_{3} + 3}} \\ \leq & \frac{(\kappa_1 + \kappa_2 + 2)(\|y_1\|_{1, p}^p + \|y_2\|_{1, p}^p)^{\frac{1}{p}}}{\kappa_1 + \kappa_2 + \kappa_{3} + 3} + \frac{(\kappa_{3} + 1)\|y_{3}\|_{1, p}}{\kappa_1 + \kappa_2 + \kappa_{3} + 3}\\
    \leq & (\|y_1\|_{1, p}^p + \|y_2\|_{1, p}^p)^{\frac{1}{p}} + \|y_{3}\|_{1, p}\\
    \leq &C_{24} \left(\|y_1\|_{1, p}^p + \|y_2\|_{1, p}^p + \|y_{3}\|_{1, p}^p\right)^{\frac{1}{p}}.
\end{split}
\end{equation*}
Then, \eqref{u1u2u3 < alpha123} comes to
\begin{equation*}
\begin{split}
   \|y_1\|_{1, p}^{\kappa_1 + 1}\|y_2\|_{1, p}^{\kappa_2 + 1} \|y_{3}\|_{1, p}^{\kappa_{3} + 1}\leq & C_{19}  (\|y_1\|_{1, p}^p + \|y_2\|_{1, p}^p)^{\frac{\kappa_1 + \kappa_2 + 2}{p}}\|y_{3}\|_{1, p}^{\kappa_{3} + 1} \\
    \leq & C_{25} (\|y_1\|_{1, p}^p + \|y_2\|_{1, p}^p + \|y_{3}\|_{1, p}^p)^{\frac{\kappa_1 + \kappa_2 + \kappa_{3} + 3}{p}}.
\end{split}
\end{equation*}
Similarly, using the recursive method, we can see that for case of $i=m$, one has
\begin{equation}\label{nu1u2u3un2}
   \prod_{i = 1}^{m} \|y_i\|_{1, p}^{\kappa_i + 1}\leq C_{26}\|(y_1, y_2, \cdots, y_m)\|_{Y,1}^{\sum_{i = 1}^{m}\kappa_i + m},
\end{equation}
here $C_{22}, C_{23}, C_{24}, C_{25}, C_{26}$ are positive constants. Combining \eqref{nu1u2u3un} with \eqref{nu1u2u3un2}, we can deduce \eqref{u1un < alpha1n} and the proof of Theorem \ref{fangchengzujie} is complete.
\end{proof}

\section{Conclusion}
In this paper, we generalize the double phase problem from a single equation to a system with singular and superlinear terms and by using of the Nehari method, the existence of two positive weak solutions is obtained. It is worth mentioning that the system we are considering is actually a special case of the following system can be considered in the future,
\begin{equation}\label{eq2}
  \begin{cases}
    - \Delta_{p}y - {\rm div}(\eta\lvert \nabla {y}\rvert^{q - 2}\nabla y) = f(z, y), 
\end{cases}
\end{equation}
where $y\in \left[W^{1,\mathcal{H}}(\Omega)\right]^{m}$; $p,q\in \mathbb{R}^{m}$, $p_i<q_i$ ($i=1,\cdots,m$) and $\eta \in \overline{\Omega} \rightarrow \mathbb{R}^{m}$ with $\eta_i(z)\geq 0$ $(i=1,\cdots,m)$ for $a.e.$ $z\in \Omega$. Additionally, double phase systems with more kinds of nonlinear terms are also worthy of further study.
\subsection*{Acknowledgements} The authors would like to sincerely thank the referees for their valuable comments,
which helped improve the initial manuscript.
\subsection*{Funding}
This research was supported by NSFC grant number 11571207, Shandong Provincial Natural Science Foundation number ZR2021MA064 and the Taishan Scholar project.

\section*{Declarations}
\subsection*{Conflict of interest} The authors declare that they have no conflict of interest.
\subsection*{Data Availability Statement}
Data sharing not applicable to this article as no datasets were generated or analyzed during the current study.
\section*{References}
\begingroup
\renewcommand{\section}[2]{}

\end{document}